\DeclareRobustCommand\widecheck[1]{{\mathpalette\@widecheck{#1}}}
\def\@widecheck#1#2{%
    \setbox\z@\hbox{\m@th$#1#2$}%
    \setbox\tw@\hbox{\m@th$#1%
       \widehat{%
          \vrule\@width\z@\@height\ht\z@
          \vrule\@height\z@\@width\wd\z@}$}%
    \dp\tw@-\ht\z@
    \@tempdima\ht\z@ \advance\@tempdima2\ht\tw@ \divide\@tempdima\thr@@
    \setbox\tw@\hbox{%
       \raise\@tempdima\hbox{\scalebox{1}[-1]{\lower\@tempdima\box
\tw@}}}%
    {\ooalign{\box\tw@ \cr \box\z@}}}
\newtheorem{thm}{Theorem}[section]
\newtheorem{defn}[thm]{Definition}
\newtheorem{lma}[thm]{Lemma}
\newtheorem{cor}[thm]{Corollary}
\newtheorem{prop}[thm]{Proposition}
\newtheorem{ex}[thm]{Example}
\newtheorem{rem}[thm]{Remark}
\newcommand{\lspan}{\operatorname{span}}
\newcommand{\clspan}{\overline{\operatorname{span}}}
\newcommand{\Mor}{\operatorname{Mor}}
\newcommand{\K}{\mathcal{K}}
\newcommand{\B}{\mathcal{B}}
\newcommand{\G}{\mathbb{G}}
\newcommand{\ovot}{\mathbin{\overline{\otimes}}}
\newcommand{\Irr}{\operatorname{Irr}}
\renewcommand{\H}{\mathcal{H}}
\newcommand{\ev}{\operatorname{ev}}
\newcommand{\Ad}{\operatorname{Ad}}
\newcommand{\Tr}{\operatorname{Tr}}
\newcommand{\wG}{\widehat{\mathbb{G}}}
\newcommand{\id}{\operatorname{id}}
\newcommand{\Di}{\mbox{}\hfill\Diamond} 
\title{Actions of compact and discrete quantum groups on operator systems}
\renewcommand*\author[1]{%
  \stepcounter{author}%
  \ifnum\c@author=1
    \gdef\@author{#1}%
  \else
    \xdef\@author{\unexpanded\expandafter{\@author\and#1}}%
  \fi
  \csgdef{author@\the\c@author}{#1}}
\newcommand*\email[1]{%
  \csgdef{email@\the\c@author}{#1}}
\newcommand*\address[1]{%
  \csgdef{address@\the\c@author}{#1}}
  \xdef\author@count{\the\c@author}%
\newcommand*\print@authors{%
  \ifnum\c@author>\author@count
  \else
    \print@author{\the\c@author}%
    \advance\c@author by 1
    \expandafter\print@authors
  \fi}
\newcommand*\print@author[1]{%
  \par\medskip
  \begin{tabular}{@{}l@{}}%
    \textsc{\csuse{author@#1}}\\
    \csuse{address@#1}\\
    \textit{E-mail address}:
    \href{mailto:\csuse{email@#1}}{\csuse{email@#1}}
  \end{tabular}}
\author{Joeri De Ro}
\address{Vrije Universiteit Brussel}
\email{joeri.ludo.de.ro@vub.be}
\author{Lucas Hataishi}
\address{University of Oslo}
\email{lucasyh@math.uio.no}
\date{}
\begin{document}

\maketitle

\begin{abstract}
    \noindent We introduce the notion of an action of a discrete or compact quantum group on an operator system, and study equivariant operator system injectivity. We then prove a duality result that relates equivariant injectivity with dual injectivity on associated crossed products. As an application, we give a description of the equivariant injective envelope of the reduced crossed product built from an action of a discrete quantum group on an operator system. 
\end{abstract}

\section{Introduction}

The Furstenberg boundary $\partial_F \Gamma$ associated to a discrete group $\Gamma$ fits naturally in Hamana's theory of $\Gamma$-injective envelopes \cite{Ham2} by observing that $C(\partial_F \Gamma)= I_{\Gamma}(\mathbb{C})$. This key insight has led to important advances in understanding $C^*$-simple discrete groups \cite{KenKal}, and is very suitable to generalisation to the quantum setting. This was first explored in \cite{KKSV} and later in \cite{HHN}. A significant part of these papers is devoted to understanding the notion of equivariant injectivity of a (unital) $C^*$-algebra on which a certain quantum group acts. 

In this paper, we continue the study of equivariant injectivity for actions of compact and discrete quantum groups. Notions of (equivariant) injectivity are naturally studied in the context of operator systems, so we first introduce the notion of an action of a compact and a discrete quantum group on an operator system, leading us to the notion of equivariant operator system. This offers a unifying and natural framework in which the notion of equivariant injectivity, and by extension other dynamical properties of compact and discrete quantum groups, can be studied. The choice to work with compact and discrete quantum groups instead of with general locally compact quantum groups was a deliberate one: the proofs of the main results in this paper rely on the specific structure of compact quantum groups and their duals. Nevertheless, some of the definitions in this paper can be immediately given on the level of locally compact quantum groups, but we refrained from doing so to keep things more concrete and less technical.

We now give an overview of the content of this paper.

In \emph{Section 2}, we explain our notations and conventions and recall some theory relevant for our purposes. First, we discuss operator systems and tensor products of operator systems.  Special attention is given to the Fubini tensor product of operator systems, which will play a crucial role in this paper. Next, we briefly review the theory of compact and discrete quantum groups. The framework for quantum groups we use in this paper is as follows: we start from a compact quantum group $\G$ and consider discrete quantum groups as the duals of compact quantum groups. The dual discrete quantum group of $\G$ will be denoted by $\wG$.

In \emph{Section 3}, we develop the basic theory of actions of compact and discrete quantum groups on operator systems. If $\G$ is a compact quantum group, we will consider two notions of an operator system $X$ with a $\G$-action $X \curvearrowleft \G$:

\begin{enumerate}
    \item A $\G$-$C^*$-operator system (Definition \ref{Gcstar}), which should be thought of as a version of the notion of an action of a compact quantum group on a $C^*$-algebra in the setting of operator systems.
    \item A $\G$-$W^*$-operator system (Definition \ref{GWstar}), which should be thought of as a version of the notion of an action of a compact quantum group on a von Neumann algebra in the setting of operator systems.
\end{enumerate} 

The notion of $\G$-$C^*$-injectivity for $\G$-$C^*$-operator systems is introduced, as well as the notion of $\G$-$W^*$-injectivity for $\G$-$W^*$-operator systems (Definition \ref{Ginjectivity}). 

Next, the notion of a $\wG$-operator system  is introduced, which consists of an operator system $X$ with an action $X\stackrel{\alpha}\curvearrowleft \wG$ (Definition \ref{action DQG}). We also define and investigate the notion of $\wG$-injectivity for a $\wG$-operator system. With respect to the $\wG$-operator system $X\stackrel{\alpha}\curvearrowleft\wG$, we define the reduced crossed product $X\rtimes_{\alpha,r}\wG$ (Definition \ref{reduced}) and the Fubini crossed product $X\rtimes_{\alpha, \mathcal{F}}\wG$ (Definition \ref{Fubini}). We show that $X\rtimes_{\alpha,r}\wG$ has a canonical $\G$-$C^*$-operator system structure and that $X\rtimes_{\alpha, \mathcal{F}} \wG$ has a canonical $\G$-$W^*$-operator system structure and study these equivariant operator systems (Theorem \ref{important action}). 

In \emph{Section 4}, we prove the main result of this paper:

\textbf{Theorem \ref{main result}.} \textit{If $X$ is an operator system with an action $X\stackrel{\alpha}\curvearrowleft\wG$, then the following are equivalent:
\begin{enumerate}
    \item $X$ is $\wG$-injective.
    \item $X\rtimes_{\alpha, r}\wG$ is $\G$-$C^*$-injective.
    \item $X\rtimes_{\alpha, \mathcal{F}}\wG$ is $\G$-$W^*$-injective.
\end{enumerate}
Moreover, if $\G$ is of Kac-type, then $X$ is $\wG$-injective if and only if $X\rtimes_{\alpha,\mathcal{F}} \wG$ is injective as an operator system.}

Note that this result recovers the original result \cite[II, Lemma 3.1]{Ham3} in the classical case, but that it also extends it to the equivariant setting for the crossed products.

Applying the main result to the situation where $\mathbb{C}\curvearrowleft\wG$ leads to the following characterisation of amenability of $\wG$ in terms of equivariant injectivity of associated operator algebras and their canonical $\G$-actions:

\textbf{Corollary \ref{amenability}.} \textit{The following statements are equivalent:
    \begin{enumerate}
        \item $\wG$ is amenable.
        \item $\mathscr{L}^\infty(\G)$ is $\G$-$W^*$-injective.
        \item $C_r(\G)$ is $\G$-$C^*$-injective.
            \end{enumerate}}

Note that in \cite{Crann}, amenability of $\wG$ was characterised using the notion of $1$-injectivity of $\mathscr{L}^\infty(\G)$ as an operator $L^1(\G)$-module, which somewhat resembles one of the equivalences in this result.

In \emph{Section 5}, we prove that if $X$ is a $\G$-$C^*$-operator system, then $X$ admits a $\G$-$C^*$-injective envelope $I_\G^{C^*}(X)$ and that if $X$ is a $\wG$-operator system, then $X$ admits a $\wG$-injective envelope $I_{\wG}(X)$. We then relate these injective envelopes by the following duality result:

\textbf{Theorem \ref{description}.} \textit{Let $X$ be a $\wG$-operator system. Then
$$I_{\wG}(X)\rtimes_r \wG = I_{\G}^{C^*}(X\rtimes_r \wG).$$}

\section{Preliminaries}

\subsection{Tensor products of operator systems}

\textbf{Operator systems.} \cite{Paulsen} An operator system $X$ is a norm-closed, self-adjoint linear subspace $X\subseteq B(\H)$ where $\H$ is a Hilbert space such that $X$ contains the unit of $B(\H)$. We write $X_*:= \{\omega\vert_X: \omega \in B(\H)_*\}$. We will abbreviate (unital) completely positive map by (u)cp map and (unital) completely isometric map by (u)ci map. If $Y\subseteq X$ are operator systems, then a map $\varphi: X \to Y$ is called ucp conditional expectation if $\varphi$ is a ucp map such that $\varphi(y) = y$ for all $y\in Y$. An operator system $I$ is called injective if for every ucp map $\varphi: X \to I$ and every uci map $\iota: X\to Y$, there is a ucp map $\Phi: Y \to I$ such that $\Phi \iota = \phi$. Arveson's extension theorem says that $B(\H)$ is an injective operator system for every Hilbert space $\H$.

Given two operator systems, there are several kinds of tensor products that will be important to us. The usual algebraic tensor product of vector spaces will be denoted by $\odot$. The letters $\H$ and $\K$ will always denote Hilbert spaces and their Hilbert space tensor product is denoted by $\H \otimes \K$. We assume that inner products are linear in the second factor.

\textbf{Spatial tensor product}. \cite[Chapter 12]{Paulsen} Let $X\subseteq B(\H),Y\subseteq B(\K)$ be operator systems. We define the \emph{spatial tensor product} of $X$ and $Y$ to be the closed linear span of $X\odot Y$ inside $B(\H \otimes \K)$. If $X,Y$ are $C^*$-algebras, this agrees with the minimal tensor product of $C^*$-algebras. If $\phi_1: X_1\to Y_1$ and $\phi_2: X_2\to Y_2$ are ucp maps, there is a unique ucp map $\phi_1\otimes \phi_2: X_1\otimes Y_1\to X_2\otimes Y_2$ that extends $\phi_1\odot \phi_2: X_1\odot Y_1\to X_2\odot Y_2$. 

\textbf{Fubini tensor product}. \cite{Ham3, Ham1} Let $X\subseteq B(\H),Y\subseteq B(\K)$ be operator systems. We define the \emph{Fubini tensor product}
$$X \ovot Y = \{z \in B(\H\otimes \K): (\omega \ovot \id)(z) \in Y \text{\ and }(\id \ovot \chi)(z)\in X \text{\ for\ all } \omega \in B(\H)_*, \chi\in B(\K)_*\}$$
which is again an operator system. The Fubini tensor product obeys obvious commutativity, associativity and distributivity (with respect to $\ell^\infty$-direct sums of operator systems) properties. If either $X$ or $Y$ is finite-dimensional, then $X\ovot Y$ coincides with the algebraic tensor product $X\odot Y$. If $X,Y$ are both von Neumann algebras, then the Fubini tensor product is simply the von Neumann tensor product.

If $X_i\subseteq B(\H_i), i=1,2$ are operator systems and $\phi: X_1\to X_2$ is a ucp map, there is a unique ucp map 
$\phi \ovot \id: X_1\ovot B(\K)\to X_2\ovot B(\K)$
that extends $\phi\odot \id: X_1 \odot B(\K)\to X_2 \odot B(\K)$. Similarly, if $Y_i\subseteq B(\K_i), i=1,2$ are operator systems and $\psi: Y_1\to Y_2$ is ucp, we can define the ucp map $\id\ovot \psi: B(\H)\ovot Y_1\to B(\H)\ovot Y_2$. 

To make sure that everything works well, we need to impose continuity assumptions. This is illustrated by the following results, which will be used all the time without further mention.

\begin{lma}\label{Fubinitensor}\cite[I, Lemma 3.8]{Ham3}
    With notations as above and $Y\subseteq B(\K)$ an operator system, we have $(\phi \ovot \id)(X_1 \ovot Y)\subseteq X_2 \ovot Y$ if either $Y$ is $\sigma$-weakly closed or if $X_1$ and $X_2$ are $\sigma$-weakly closed and $\phi: X_1\to X_2$ is $\sigma$-weakly continuous. A similar result holds for $\id \ovot \psi$.
\end{lma}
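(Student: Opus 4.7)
My plan is to fix an Arveson extension $\tilde\phi: B(\H_1)\to B(\H_2)$ of $\phi$ so that $\phi\ovot\id$ is realised as the restriction of $\tilde\phi\ovot\id:B(\H_1)\ovot B(\K)\to B(\H_2)\ovot B(\K)$ to $X_1\ovot B(\K)$. Given $z\in X_1\ovot Y$ and writing $w:=(\phi\ovot\id)(z)$, the inclusion $w\in X_2\ovot Y$ amounts to showing $(\id\ovot\chi)(w)\in X_2$ and $(\omega\ovot\id)(w)\in Y$ for every $\chi\in B(\K)_*$ and $\omega\in B(\H_2)_*$. The first condition is automatic from the slice identity $(\id\ovot\chi)(\tilde\phi\ovot\id)(z)=\tilde\phi((\id\ovot\chi)(z))$ together with $(\id\ovot\chi)(z)\in X_1$ and $\tilde\phi|_{X_1}=\phi$.

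The crux is the second condition, for which the key intermediate identity is
$$\chi((\omega\ovot\id)(w))=(\omega\ovot\chi)(w)=\omega\bigl(\phi((\id\ovot\chi)(z))\bigr),\qquad \chi\in B(\K)_*,$$
valid because $(\id\ovot\chi)(z)\in X_1$, so that $\tilde\phi$ collapses to $\phi$. In the case where $Y$ is $\sigma$-weakly closed, I would argue via the bipolar theorem: $(\omega\ovot\id)(w)\in Y$ iff it is annihilated by every $\chi\in B(\K)_*$ vanishing on $Y$. For such a $\chi$ one first shows $(\id\ovot\chi)(z)=0$, since for any $\omega'\in B(\H_1)_*$ one computes $\omega'((\id\ovot\chi)(z))=\chi((\omega'\ovot\id)(z))=0$ using $(\omega'\ovot\id)(z)\in Y$; substituting into the displayed identity gives $\chi((\omega\ovot\id)(w))=0$ as required. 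In the other case, where $X_1,X_2$ are $\sigma$-weakly closed and $\phi$ is normal, the functional $\omega\circ\phi$ is normal on $X_1$ and so lifts, via surjectivity of the restriction $B(\H_1)_*\to X_{1,*}$, to some $\tilde\omega\in B(\H_1)_*$. The displayed identity then becomes $\chi((\omega\ovot\id)(w))=\chi((\tilde\omega\ovot\id)(z))$ for every $\chi$, whence $(\omega\ovot\id)(w)=(\tilde\omega\ovot\id)(z)$, which lies in $Y$ since $z\in X_1\ovot Y$. The analogous statement for $\id\ovot\psi$ follows by symmetry of the Fubini tensor product.

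The principal subtlety is ensuring that the slice identities used above hold on all of $B(\H_1)\ovot B(\K)=B(\H_1\otimes\K)$ and not only on algebraic tensors; this is guaranteed by normality of the slice functionals in each tensor factor together with the fact that product functionals $\omega\ovot\chi$ separate points of $B(\H_2\otimes\K)$. Once this is in hand, each case reduces to the essentially two-line calculations sketched above, with no further analytic difficulty.
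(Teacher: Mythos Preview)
The paper does not give its own proof of this lemma; it merely states the result and cites Hamana \cite[I, Lemma 3.8]{Ham3}. Your proposal is therefore not competing against anything in the paper, and it is correct as written. The slice-map/bipolar argument you sketch is essentially the standard route (and is in the spirit of Hamana's original): once one knows the slice identity $(\id\ovot\chi)\circ(\phi\ovot\id)=\phi\circ(\id\ovot\chi)$, the two cases reduce exactly to the short computations you give. Your handling of the potentially delicate point---that $\tilde\phi$ need not be normal, so one must argue via $B(\K)_*$-slices rather than $B(\H_1)_*$-slices---is clean, and the use of the bipolar theorem in the $\sigma$-weakly closed $Y$ case and the lifting $B(\H_1)_*\twoheadrightarrow (X_1)_*$ in the normal-$\phi$ case are both standard and correctly invoked.
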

\begin{prop}\cite[I, Lemma 3.9]{Ham3}
    With notations as above, if $Y_1$ and $Y_2$ are $\sigma$-weakly closed and $\psi: Y_1\to Y_2$ is $\sigma$-weakly continuous, then the compositions
$$(\id_{B(\H_2)}\ovot \psi)\circ (\phi \ovot \id_{B(\K_1)}), \quad(\phi \ovot \id_{B(\K_1)})\circ (\id_{B(\H_1)}\ovot \psi)$$
agree on $X_1\ovot Y_1$ and map $X_1\ovot Y_1$ into $X_2\ovot Y_2$. In this way, we obtain a canonical map $\phi\ovot \psi: X_1\ovot Y_1\to X_2\ovot Y_2$. If $\phi, \psi$ are complete order isomorphisms, then so is $\phi\ovot \psi$. Thus, as an abstract operator system, the Fubini tensor product only depends on the isomorphism classes of the factors.
\end{prop}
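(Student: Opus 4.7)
The plan is to establish the existence of $\phi \ovot \psi$ in two parts (well-definedness of each composition and their agreement) and then handle the complete order isomorphism case. For well-definedness of, say, $(\id_{B(\H_2)} \ovot \psi) \circ (\phi \ovot \id_{B(\K_1)})$, two applications of Lemma \ref{Fubinitensor} suffice: since $Y_1$ is $\sigma$-weakly closed, $(\phi \ovot \id_{B(\K_1)})$ carries $X_1 \ovot Y_1$ into $X_2 \ovot Y_1$; and since $Y_1, Y_2$ are $\sigma$-weakly closed and $\psi$ is $\sigma$-weakly continuous, the symmetric form of the lemma gives that $(\id_{B(\H_2)} \ovot \psi)$ carries $X_2 \ovot Y_1$ into $X_2 \ovot Y_2$. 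The other composition is analogous, using $\sigma$-weak closedness of $Y_2$ for the outer step.

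For agreement, since elements of $B(\H_2 \otimes \K_2)$ are determined by their right slices, it suffices to check that $(\id \ovot \chi)$ applied to either composition yields the same element of $X_2$ for every $\chi \in B(\K_2)_*$. I rely on the two standard slice identities built into the Fubini construction of the maps involved,
\begin{align*}
(\id \ovot \chi')(\phi \ovot \id)(w) &= \phi\bigl((\id \ovot \chi')(w)\bigr) \qquad (w \in X_1 \ovot B(\K),\ \chi' \in B(\K)_*), \\
(\omega' \ovot \id)(\id \ovot \psi)(u) &= \psi\bigl((\omega' \ovot \id)(u)\bigr) \qquad (u \in B(\H) \ovot Y_1,\ \omega' \in B(\H)_*),
\end{align*}
together with the observation that $\chi \circ \psi \in B(\K_1)_*$ (which uses $\sigma$-weak continuity of $\psi$). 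Chaining these, both compositions reduce to $\phi\bigl((\id \ovot (\chi \circ \psi))(z)\bigr)$ after right-slicing by $\chi$, and therefore coincide.

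If $\phi, \psi$ are complete order isomorphisms, then $\phi^{-1}$ is ucp, and $\psi^{-1}$ is a $\sigma$-weakly continuous ucp map: the $\sigma$-weak continuity follows because $\psi$ is a $w^*$-continuous bijection between dual operator spaces, forcing the predual $\psi_*$ to be a bounded bijection, whence $\psi^{-1} = (\psi_*^{-1})^*$ is $w^*$-continuous. Applying the preceding parts to $(\phi^{-1}, \psi^{-1})$ yields $\phi^{-1} \ovot \psi^{-1}$; its compositions with $\phi \ovot \psi$ restrict to the identity on algebraic tensors, and the same slice argument promotes this to equality on the Fubini products, so $\phi \ovot \psi$ is a ucp bijection with ucp inverse, i.e., a complete order isomorphism. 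The main obstacle I anticipate lies in the middle step: because $X_1 \odot Y_1$ is not dense in $X_1 \ovot Y_1$, agreement cannot be obtained by continuous extension, and the slice identities must instead be unpacked from the construction of the Fubini tensor product of ucp maps. The $\sigma$-weak continuity hypothesis on $\psi$ is crucial precisely because it is what makes $\chi \circ \psi$ a predual functional, which is what permits the second slice identity.
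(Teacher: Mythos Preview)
The paper does not supply its own proof of this proposition; it is quoted verbatim as \cite[I, Lemma 3.9]{Ham3} in the preliminaries and left unproven. So there is nothing in the paper to compare your argument against.

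That said, your proposal is correct and is essentially the standard argument one would expect here. The two applications of Lemma~\ref{Fubinitensor} give the ranges; the slice verification of agreement is exactly the right substitute for a density argument (which, as you note, is unavailable since $X_1 \odot Y_1$ need not be dense in $X_1 \ovot Y_1$); and your use of $\sigma$-weak continuity of $\psi$ to force $\chi \circ \psi \in B(\K_1)_*$ is the crux. One small point worth tightening: the identity $(\id \ovot \chi)(\id \ovot \psi) = \id \ovot (\chi \circ \psi)$ on $B(\H) \ovot Y_1$ is not quite one of the two slice identities you display, so you should either state it separately or derive it (it follows immediately from the defining property of $\id \ovot \psi$ once $\chi \circ \psi$ is known to be normal). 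For the last part, your argument that $\psi^{-1}$ is $\sigma$-weakly continuous via the predual is fine; an alternative is to note that $\psi$ restricts to a $w^*$-continuous bijection between the (compact) closed unit balls and invoke Krein--\v{S}mulian.
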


\begin{prop}
    Let $X\subseteq B(\H),Y\subseteq B(\K)$ be operator systems with $Y$ $\sigma$-weakly closed in $B(\K)$. Assume that $f\in X^*$ and $g\in Y_*$. There exist canonical slice maps
    $f\ovot \id_Y: X \ovot Y \to Y$ and $\quad \id_X \ovot g: X \ovot Y \to X$ that extend $f\odot \id: X \odot Y \to Y$ and $\id\odot g: X \odot Y \to X$, are norm-bounded, and satisfy
    $$g\circ (f\ovot \id_Y) = f \circ (\id_X \ovot g).$$
    This allows us to define the product functional $f\ovot g: X \ovot Y \to \mathbb{C}$. 
\end{prop}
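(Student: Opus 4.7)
My plan is to build $\id_X \ovot g$ as the restriction of a familiar predual slice on $B(\H) \ovot B(\K)$, and then obtain $f \ovot \id_Y$ by duality, exploiting the $\sigma$-weak closedness of $Y$ to identify it with $(Y_*)^*$. For the first slice map, I would pick any extension $\tilde g \in B(\K)_*$ of $g \in Y_*$ (which exists by definition of $Y_*$) and let $\id \ovot \tilde g : B(\H) \ovot B(\K) \to B(\H)$ be the standard $\sigma$-weak slice on the von Neumann tensor product. By the very definition of $X \ovot Y$, this map carries $X \ovot Y$ into $X$. Independence of the choice of extension follows because if $h := \tilde g_1 - \tilde g_2 \in B(\K)_*$ vanishes on $Y$, then for every $z \in X \ovot Y$ and $\omega \in B(\H)_*$ the standard slice identity $\omega((\id \ovot h)(z)) = h((\omega \ovot \id)(z))$ combined with $(\omega \ovot \id)(z) \in Y$ forces $(\id \ovot h)(z) = 0$. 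The resulting map $\id_X \ovot g$ extends $\id \odot g$, has norm bounded by $\|g\|$, and takes values in $X$.

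For the second slice map, since $Y$ is $\sigma$-weakly closed in $B(\K)$, the predual duality gives $Y = (Y_*)^*$. Given $z \in X \ovot Y$, the assignment $g \mapsto f((\id_X \ovot g)(z))$ is linear in $g$ and bounded by $\|f\| \cdot \|z\|$ (using the norm estimate for $\id_X \ovot g$ obtained in the first step), so it represents a unique element $(f \ovot \id_Y)(z) \in Y$. This is automatically linear in $z$ and of norm at most $\|f\|$, and the compatibility $g \circ (f \ovot \id_Y) = f \circ (\id_X \ovot g)$ is then built into the definition. On an elementary tensor $x \otimes y$, evaluating against any $g \in Y_*$ and using $(\id_X \ovot g)(x \otimes y) = g(y) x$ gives $g((f \ovot \id_Y)(x \otimes y)) = g(f(x) y)$, so separation of points of $Y$ by $Y_*$ forces $(f \ovot \id_Y)(x \otimes y) = f(x) y$. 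The product functional is then well-defined and unambiguous via $f \ovot g := g \circ (f \ovot \id_Y) = f \circ (\id_X \ovot g)$.

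The main obstacle is the construction of $f \ovot \id_Y$ itself, since $f \in X^*$ has no a priori extension to $B(\H)$ that is compatible with taking Fubini tensor products — an arbitrary Hahn–Banach extension $\tilde f \in B(\H)^*$ need not be normal, so the usual slice $\tilde f \ovot \id$ is not available on the von Neumann tensor product. The trick is to avoid extending $f$ altogether and to reconstruct the slice abstractly as a bounded functional on $Y_*$, which is exactly where the $\sigma$-weak closedness hypothesis enters essentially through the duality $Y = (Y_*)^*$.
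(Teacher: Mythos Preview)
The paper does not actually supply a proof of this proposition; it is stated in the preliminaries on the Fubini tensor product as a standard fact (in the same block of results attributed to Hamana's papers \cite{Ham3,Ham1}). So there is no ``paper's proof'' to compare against directly.

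That said, your argument is correct and is precisely the standard route. A couple of minor remarks: for the norm bound $\|\id_X \ovot g\| \le \|g\|$ you implicitly use that the quotient norm on $Y_* = B(\K)_*/Y_\perp$ is realised as an infimum over extensions $\tilde g$, so that independence of the extension lets you pass to the infimum; and your key observation---that one cannot simply Hahn--Banach extend $f$ to a normal functional on $B(\H)$, so the slice $f \ovot \id_Y$ must be \emph{defined} by duality via $Y = (Y_*)^*$---is exactly the reason the $\sigma$-weak closedness hypothesis on $Y$ is needed. An equivalent way to organise the same argument is to first land in $B(\K)$ by dualising the map $B(\K)_* \ni \chi \mapsto f((\id \ovot \chi)(z))$ and then check the result lies in $Y = (Y_\perp)^\perp$ using $(\id \ovot \chi)(z) = 0$ for $\chi \in Y_\perp$; this is the same computation you already did for independence of $\tilde g$.
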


Everything then works as expected. For example:
\begin{prop}\cite[Proposition 1.2]{Ham1} Assume $X_i\subseteq B(\H_i)$ and $Y_i\subseteq B(\K_i)$ are operator systems.
    Assume $\varphi: X_1\to X_2$ and $\psi: Y_1\to Y_2$ are ucp maps with $\psi$ $\sigma$-weakly continuous and $Y_i$ $\sigma$-weakly closed in $B(\K_i)$. If $f\in X_2^*$ and $g\in (Y_2)_*$, then
    \begin{align*}
        (f \ovot \id_{Y_2})\circ (\varphi \ovot \psi) = \psi \circ ((f\circ \varphi) \ovot \id_{Y_1}), \quad  (\id_{X_2}\ovot g)\circ (\varphi \ovot \psi) = \varphi \circ (\id_{X_1}\ovot (g\circ \psi)).
    \end{align*}
\end{prop}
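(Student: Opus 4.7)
I would reduce both asserted equalities to a single scalar identity, then verify that identity using the factorization of $\varphi\ovot\psi$ provided by the preceding proposition together with the defining property $f\ovot g = g\circ(f\ovot\id_Y) = f\circ(\id_X\ovot g)$ of product functionals.

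For the first identity, both sides land in $Y_2$, which is $\sigma$-weakly closed, so its normal functionals $(Y_2)_*$ separate points. Pairing with an arbitrary $h\in(Y_2)_*$ and using that $h\circ(f\ovot\id_{Y_2}) = f\ovot h$ on the left, and $h\circ\psi\in(Y_1)_*$ (by $\sigma$-weak continuity of $\psi$) so that the right side becomes $(f\circ\varphi)\ovot(h\circ\psi)$, reduces the first identity to the scalar equation
\[
(f\ovot h)\circ(\varphi\ovot\psi) = (f\circ\varphi)\ovot(h\circ\psi) \quad\text{on } X_1\ovot Y_1.
\]
The second identity reduces to the same scalar equation by separating points of $X_2$ with $f'\in X_2^*$ (Hahn--Banach) and using the dual description $f'\ovot g = f'\circ(\id_{X_2}\ovot g)$.

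To prove the scalar identity, I would use $\varphi\ovot\psi = (\varphi\ovot\id_{Y_2})\circ(\id_{X_1}\ovot\psi)$ to peel off one tensor slot at a time, alternately invoking the compatibility $(f\ovot\id_{Y_2})\circ(\varphi\ovot\id_{Y_2}) = (f\circ\varphi)\ovot\id_{Y_2}$ on the left factor and $(\id_{X_1}\ovot h)\circ(\id_{X_1}\ovot\psi) = \id_{X_1}\ovot(h\circ\psi)$ on the right factor, so that after the composition
\[
(f\ovot h)\circ(\varphi\ovot\psi) \;=\; h\circ(f\ovot\id_{Y_2})\circ(\varphi\ovot\id_{Y_2})\circ(\id_{X_1}\ovot\psi)
\]
is rewritten the chain telescopes to $(f\circ\varphi)\circ(\id_{X_1}\ovot(h\circ\psi)) = (f\circ\varphi)\ovot(h\circ\psi)$. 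Each auxiliary compatibility agrees on $X\odot Y$ by a one-line elementary-tensor calculation and extends to the full Fubini tensor product via the characterization of the slice map as the unique bounded map whose pairings with normal functionals on the remaining side recover the product functional.

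\textbf{Main obstacle.} The principal subtlety is precisely this last extension step, since $X\odot Y$ is not in general norm-dense in $X\ovot Y$. The resolution is that the $\sigma$-weak continuity of $\psi$ together with the normality of $g$ and $h$ makes the relevant slice maps rigid enough that pairing with any further normal functional on the opposite factor reduces the identities back to the elementary-tensor level; the hypothesis that $Y_1$ and $Y_2$ are $\sigma$-weakly closed is what guarantees that the separating normal functionals exist on the $Y$-side, while Hahn--Banach supplies the merely bounded functionals needed on the $X$-side.
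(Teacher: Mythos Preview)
The paper does not supply its own proof of this proposition; it is quoted verbatim from \cite[Proposition 1.2]{Ham1} in the preliminaries section. So there is no in-paper argument to compare against.

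Your proposal is correct. The reduction to the scalar identity $(f\ovot h)\circ(\varphi\ovot\psi)=(f\circ\varphi)\ovot(h\circ\psi)$ via separating families of functionals is exactly the right move, and your resolution of the ``main obstacle'' is the right one: once the slice maps $f\ovot\id$ and $\id\ovot g$ are \emph{defined} by the requirement that their pairings with normal functionals on the remaining leg yield the product functional, the auxiliary compatibilities $(f\ovot\id_{Y_2})\circ(\varphi\ovot\id_{Y_2})=(f\circ\varphi)\ovot\id_{Y_2}$ and $(\id_{X_1}\ovot h)\circ(\id_{X_1}\ovot\psi)=\id_{X_1}\ovot(h\circ\psi)$ follow immediately from the defining properties of $\varphi\ovot\id$ and $\id\ovot\psi$ themselves (namely $(\id\ovot k)\circ(\varphi\ovot\id)=\varphi\circ(\id\ovot k)$ for $k\in B(\K)_*$, and the symmetric statement), with no density argument needed. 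Your telescope then goes through as written once one inserts the two extra uses of $f'\ovot h = h\circ(f'\ovot\id) = f'\circ(\id\ovot h)$ to shuttle the scalar functional across; you allude to this but could make the intermediate step $h\circ((f\circ\varphi)\ovot\id_{Y_2})\circ(\id_{X_1}\ovot\psi)=(f\circ\varphi)\circ(\id_{X_1}\ovot h)\circ(\id_{X_1}\ovot\psi)$ explicit.
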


We also mention the following result:

\begin{prop}\cite[I. Proposition 3.10]{Ham3} If $X\subseteq B(\H)$ is an operator system and $Y\subseteq B(\K)$ is a $\sigma$-weakly closed operator system, then $X \ovot Y$ is injective as an operator system if and only if both $X$ and $Y$ are injective as operator systems.
\end{prop}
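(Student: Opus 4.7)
My plan is to handle the two implications separately; the non-trivial content lies in the backward direction.

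For the forward direction, I would show that $X$ and $Y$ are each ucp retracts of $X\ovot Y$. The inclusion $x\mapsto x\otimes 1_Y$ together with a slice map $\id_X\ovot\omega$ for a normal state $\omega\in Y_*$ (which exists since $Y$ is unital and $\sigma$-weakly closed in $B(\K)$) gives a retraction onto $X$; symmetrically, $y\mapsto 1_X\otimes y$ and $f\ovot\id_Y$ for a state $f\in X^*$ retract out $Y$. Since ucp retracts of injective operator systems are injective, this yields injectivity of both $X$ and $Y$.

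For the backward direction, Arveson's extension theorem provides ucp conditional expectations $E_X: B(\H)\to X$ and $E_Y: B(\K)\to Y$. The naive plan of forming $E_X\ovot E_Y$ runs into the main obstacle I anticipate: the proposition constructing the Fubini tensor of two ucp maps requires one of them to be $\sigma$-weakly continuous, and a $\sigma$-weakly closed injective operator system need not admit a normal conditional expectation in its given representation. To sidestep this, I would instead form the asymmetric composition
\[
\Phi: B(\H)\ovot B(\K) \xrightarrow{\id_{B(\H)}\ovot E_Y} B(\H)\ovot Y \xrightarrow{E_X\ovot\id_Y} X\ovot Y,
\]
each arrow being well-defined and ucp by Lemma \ref{Fubinitensor}: the first uses that $B(\H)$ is $\sigma$-weakly closed, the second uses that $Y$ is $\sigma$-weakly closed. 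Keeping a full $B(\cdot)$-factor on the appropriate side at each stage is what makes both maps land in the correct Fubini product without requiring any normality of the conditional expectations.

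It then remains to verify that $\Phi$ restricts to the identity on $X\ovot Y$. I would check this by slice maps: for $z\in X\ovot Y$ and $\omega\in B(\H)_*$, the slice identities of the preceding proposition give
\[
(\omega\ovot\id_Y)\bigl((\id\ovot E_Y)(z)\bigr) \;=\; E_Y\bigl((\omega\ovot\id_{B(\K)})(z)\bigr) \;=\; (\omega\ovot\id_{B(\K)})(z),
\]
the last equality because $(\omega\ovot\id)(z)\in Y$ by the Fubini definition of $X\ovot Y$. Since elements of $B(\H)\ovot Y$ are determined by their $\omega$-slices into $Y$, this forces $(\id\ovot E_Y)(z) = z$; a symmetric computation with $\chi\in Y_*$ shows the same for $E_X\ovot\id_Y$. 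Thus $\Phi$ is a ucp conditional expectation of $B(\H\otimes\K)$ onto $X\ovot Y$, exhibiting $X\ovot Y$ as a ucp retract of the injective $B(\H\otimes\K)$ and hence as an injective operator system.
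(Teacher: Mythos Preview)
Your argument is correct, and indeed this is essentially the standard proof. Note, however, that the paper does not supply its own proof of this proposition: it is simply quoted from Hamana \cite[I, Proposition 3.10]{Ham3}, so there is no in-paper argument to compare against. Your write-up is precisely the natural argument one would give (and is in spirit Hamana's): retract each factor out via slice maps for the forward direction, and for the converse compose the two one-sided expectations $\id_{B(\H)}\ovot E_Y$ and $E_X\ovot\id_Y$ rather than attempting the ill-defined $E_X\ovot E_Y$, using the $\sigma$-weak closedness of $Y$ exactly where Lemma~\ref{Fubinitensor} demands it. The slice-map verification that $\Phi$ fixes $X\ovot Y$ is the right way to close the argument.
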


If one encounters a Fubini tensor product (possibly with more than two factors) in this paper, it will always be the case that all factors, except possibly the first one, are von Neumann algebras and if tensor product maps between Fubini tensor products are considered, then there will always be $\sigma$-weakly continuous maps acting on these factors. Hence, we do not have to worry too much about technicalities that come with the Fubini tensor product in this paper.

\subsection{Compact and discrete quantum groups}

\textbf{Compact quantum groups.} \cite{Wor1, NT, Timmermann} A compact quantum group (= CQG) $\G$ consists of a pair $(C(\G), \Delta)$ where $C(\G)$ is a unital $C^*$-algebra and $\Delta: C(\G)\to C(\G)\otimes C(\G)$ a unital $*$-homomorphism satisfying coassociativity $(\id \otimes \Delta)\Delta = (\Delta \otimes \id)\Delta$ and  such that the quantum cancellation rules $[\Delta(C(\G))(1\otimes C(\G))]= C(\G)\otimes C(\G) = [\Delta(C(\G))(C(\G)\otimes 1)]$ hold.

Every CQG admits a unique state $\varphi_\G: C(\G)\to \mathbb{C}$, called Haar state, satisfying the following invariance conditions 
$$(\id \otimes \varphi_\G)\Delta(a) = \varphi_\G(a)1 = (\varphi_\G\otimes \id)\Delta(a), \quad a \in C(\G).$$
If the Haar state 
$\varphi_\G$ is a trace, $\G$ is said to be a CQG of Kac type. If the Haar state $\varphi_\G$ is faithful, then $\G$ is called reduced. 

With respect to the state $\varphi_\G$, we can consider the GNS-triplet $(L^2(\G), \lambda, \xi_\G)$ where $\lambda: C(\G)\to B(L^2(\G))$ is a $*$-representation and $\xi_\G\in L^2(\G)$ the cyclic vector such that $\varphi_\G= \langle \xi_\G, \lambda(-)\xi_\G\rangle$. We will also consider the GNS-map $\Lambda: C(\G)\to L^2(\G): a \mapsto \lambda(a)\xi_\G$ and we define $C_r(\G) = \lambda(C(\G))$.

A unitary finite-dimensional (= ufd) representation $\pi$ of $\G$ consists of a pair $(\H_\pi, U_\pi)$ where $\H_\pi$ is a finite-dimensional Hilbert space and $U_\pi \in B(\H_\pi)\otimes C(\G)$ a unitary such that $(\id \otimes \Delta)(U_\pi) = U_{\pi, 12}U_{\pi, 13}$ in $B(H)\otimes C(\G)\otimes C(\G)$. Given $\xi, \eta \in \H_\pi$, we define the matrix coefficient
$U_\pi(\xi, \eta):= (\omega_{\xi, \eta}\otimes \id)(U_\pi)$ where $\omega_{\xi, \eta}$ is the vector functional $x\mapsto \langle \xi, x\eta\rangle.$ We will write $n_\pi:= \dim(\H_\pi)$. If $\{e_i^\pi\}_{i=1}^{n_\pi}$ is an orthonormal basis for $\H_\pi$, then 
$$\Delta(U_\pi(\xi, \eta)) = \sum_{i=1}^{n_\pi} U_\pi(\xi, e_i^\pi)\otimes U_\pi(e_i^\pi, \eta).$$ 
If $\pi, \chi$ are ufd representations, then a linear map $T: \H_\pi \to \H_\chi$ is called intertwiner if $(T\otimes 1)U_\pi = U_\chi(T\otimes 1)$ and we write $\Mor(\pi, \chi)$ for the set of such intertwiners. A ufd representation $\pi$ of $\G$ is called irreducible if $\Mor(\pi, \pi)$ is one-dimensional. If $\pi, \chi$ are ufd representations, then $U_{\pi\boxtimes \chi}:= U_{\pi, 13} U_{\chi, 23}\in B(\H_\pi\otimes \H_\chi)\otimes C(\G)$ defines the tensor product representation $\pi\boxtimes \chi$. We will fix, once and for the remainder of this paper, a maximal family $\Irr(\G)$ of pairwise non-equivalent irreducible representations.
The space linearly spanned by all matrix coefficients is denoted by $\mathcal{O}(\G)$. It is norm-dense in $C(\G)$, $\varphi_\G$ is faithful on $\mathcal{O}(\G)$ and $(\mathcal{O}(\G), \Delta)$ forms a Hopf $^*$-algebra. A Hamel basis for $\mathcal{O}(\G)$ is constructed as follows: let $\{e_i^\pi\}_{i=1}^{n_\pi}$ be an orthonormal basis for $\H_\pi$ for every $\pi \in \Irr(\G)$. Then $\{U_\pi(e_i^\pi, e_j^\pi): \pi\in \Irr(\G), 1\le i,j\le n_\pi\}$ is a Hamel basis for $\mathcal{O}(\G)$. Counit and antipode act on matrix coefficients by
$$\epsilon(U_\pi(\xi, \eta)) = \langle \xi, \eta\rangle, \quad S(U_\pi(\xi, \eta)) = U_\pi(\eta, \xi)^*.$$ If $\pi\in \Irr(\G)$, there exists a canonical positive invertible operator $Q_\pi\in B(\H_\pi)$ such that $\Tr(Q_\pi^{1/2}) = \Tr(Q_\pi^{-1/2})$. We write $\dim_q(\pi)$ for the latter value and the following orthogonality relations hold:
\begin{align*}
    &\varphi_\G(U_\pi(\xi, \eta)U_\chi(\xi', \eta')^*) = \delta_{\pi, \chi}\frac{\langle \xi, \xi'\rangle \langle\eta', Q_\pi^{-1/2}\eta\rangle}{\dim_q(\pi)},\\
    &\varphi_\G(U_\pi(\xi, \eta)^*U_\chi(\xi', \eta')) = \delta_{\pi, \chi}\frac{\langle \xi', Q_\pi^{1/2}\xi\rangle \langle\eta, \eta'\rangle}{\dim_q(\pi)}.
\end{align*}
Given $\pi\in \Irr(\G)$, fix once and for all an orthonormal basis $\{e_i^\pi\}_{i=1}^n$ for $\H_\pi$ so that the operator $Q_\pi$ becomes diagonal, say $Q_\pi e_i^\pi = q_{\pi,i} e_i^\pi$. We then write $u_{ij}^\pi:= U_\pi(e_i^\pi, e_j^\pi)$ and the orthogonality relations take the form
$$\varphi_\G(u_{ij}^\pi (u_{kl}^{\chi})^*) = \frac{\delta_{\pi, \chi}\delta_{i,k}\delta_{j,l} q_{\pi,j}^{-1/2}}{\dim_q(\pi)}, \quad \varphi_\G((u_{ij}^\pi)^*u_{kl}^\chi)= \frac{\delta_{\pi, \chi} \delta_{i,k}\delta_{j,l}q_{\pi,i}^{1/2}}{\dim_q(\pi)}.$$
We also define the family of Woronowicz characters $\{\delta^z: \mathcal{O}(\G)\to \mathbb{C}\}_{z\in \mathbb{C}}$ uniquely determined by 
$$(\id \odot \delta^z)(U_\pi) = Q_\pi^{-z/2}, \quad \pi\in \Irr(\G).$$
With the CQG $\G$, we can associate the two multiplicative unitaries $V_\G, W_\G\in B(L^2(\G)\otimes L^2(\G))$ given by
$$V_\G(\Lambda(a)\otimes \Lambda(b)) = (\Lambda\odot \Lambda)(\Delta(a)(1\otimes b)), \quad W_\G^*(\Lambda(a)\otimes \Lambda(b)) = (\Lambda\odot \Lambda)(\Delta(b)(a\otimes 1))$$
where $a,b\in \mathcal{O}(\G)$. One has the relations
$$V_\G(\lambda(a)\otimes 1)V_\G^* = (\lambda\odot \lambda)(\Delta(a)) = W_\G^*(1\otimes \lambda(a))W_\G, \quad a \in \mathcal{O}(\G)$$ 
and the comultiplication $$\Delta: B(L^2(\G))\to B(L^2(\G))\ovot B(L^2(\G)): x \mapsto V_\G(x\otimes 1)V_\G^*$$
restricts to a comultiplication
$$\Delta: C_r(\G)\to C_r(\G)\otimes C_r(\G): x \mapsto V_\G(x\otimes 1)V_\G^* = W_\G^*(1\otimes x)W_\G.$$The pair $(C_r(\G), \Delta)$ then defines a reduced CQG $\G_r$ and should be thought of as a `reduced version of $\G$'. For most purposes, it suffices to work with the reduced CQG. We have $\mathcal{O}(\G_r)= \lambda(\mathcal{O}(\G))$. The counit $\epsilon_r$ for $\mathcal{O}(\G_r)$ satisfies $\epsilon_r\circ \lambda= \epsilon$ and similarly for the antipode $S_r$ and Haar state $\varphi_{\G_r}$. The von Neumann algebra $\mathscr{L}^\infty(\G):= \lambda(\mathcal{O}(\G))''$ carries the coproduct $$\Delta: \mathscr{L}^\infty(\G)\to \mathscr{L}^\infty(\G)\ovot \mathscr{L}^\infty(\G): x\mapsto V_\G(x\otimes 1)V_\G^* = W_\G^*(1\otimes x)W_\G$$
which extends the coproduct on $C_r(\G)$. The vector state $\omega_{\xi_\G}: \mathscr{L}^\infty(\G)\to \mathbb{C}: x \mapsto \langle \xi_\G, x \xi_\G\rangle$ is faithful and satisfies the invariance properties
$$(\omega_{\xi_\G}\ovot \id)\Delta= \omega_{\xi_\G}(-)1 = (\id \ovot \omega_{\xi_\G})\Delta$$
on $\mathscr{L}^\infty(\G)$.

\textbf{Discrete quantum groups.} We will introduce discrete quantum groups in the von Neumann algebra setting using duality theory for multiplier Hopf $*$-algebras \cite{KVD}. The pair $(\mathcal{O}(\G), \Delta)$ is a multiplier Hopf $^*$-algebra for which the duality theory in \cite{VD} applies. We therefore obtain a new dual multiplier Hopf $^*$-algebra $(c_c(\wG), \widetilde{\Delta})$ where $$c_c(\wG) =  \{\varphi_\G(-a): a \in \mathcal{O}(\G)\}\subseteq \mathcal{O}(\G)'= M(c_c(\wG))$$ and refer to $\wG$ as a discrete quantum group (= DQG). We define dual GNS maps $\hat{\Gamma}, \hat{\Lambda}: c_c(\wG)\to L^2(\G)$ by
$$\hat{\Gamma}(\varphi_\G(-a)) = \Lambda(a), \quad \hat{\Lambda}(\omega)= \hat{\Gamma}(\omega\delta^{-1/2}), \quad a \in \mathcal{O}(\G), \omega \in c_c(\wG).$$ We then use these to define the multiplicative unitaries $V_{\wG}, W_{\wG} \in B(L^2(\G)\otimes L^2(\G))$ by 
$$V_{\wG}(\hat{\Gamma}(\omega)\otimes \hat{\Gamma}(\chi)) = (\hat{\Gamma}\odot \hat{\Gamma})(\widetilde{\Delta}(\omega)(1\otimes \chi)), \quad W_{\wG}^*(\hat{\Lambda}(\omega)\otimes \hat{\Lambda}(\chi)) = (\hat{\Lambda}\odot \hat{\Lambda})(\widetilde{\Delta}(\chi)(\omega\otimes 1))$$
for $\omega, \chi \in c_c(\wG).$ We consider the left regular $*$-representation $\hat{\lambda}: c_c(\wG)\to B(L^2(\G))$ defined by $$\hat{\lambda}(\omega) \hat{\Gamma}(\chi)= \hat{\Gamma}(\omega\chi)$$ for $\omega, \chi \in c_c(\wG)$. Then we define the von Neumann algebra
$$\mathscr{L}^\infty(\wG):= \hat{\lambda}(c_c(\wG))''\subseteq B(L^2(\G)).$$

Next, we define the comultiplications
\begin{align*}&\hat{\Delta}_l: B(L^2(\G))\to B(L^2(\G))\ovot B(L^2(\G)): x \mapsto W_{\wG}^*(1\otimes x)W_{\wG}\\
&\hat{\Delta}_r: B(L^2(\G))\to B(L^2(\G))\ovot B(L^2(\G)): x \mapsto V_{\wG}(x\otimes 1)V_{\wG}^*\end{align*}
and we have $\hat{\Delta}:= \hat{\Delta}_l = \hat{\Delta}_r$ on $\mathscr{L}^\infty(\wG)$. There is a unique $*$-isomorphism
$$\mathscr{L}^\infty(\wG) \cong \prod_{\pi\in \Irr(\G)}^{\ell^\infty} B(\H_\pi)$$ that maps $\hat{\lambda}(\omega)$ to $((\id \odot \omega)(U_\pi))_{\pi\in \Irr(\G)}$ for every $\omega \in c_c(\wG)$. 
It has the property that
$$(p_\pi\ovot p_\chi)(\hat{\Delta}(x)) T = T p_\kappa(x)$$
for all $\pi, \chi, \kappa \in \Irr(\G), x \in \mathscr{L}^\infty(\wG)$ and $T \in \Mor(\kappa, \pi\boxtimes \chi)\subseteq B(\H_\kappa, \H_\pi\otimes \H_\chi)$, where $p_\pi: \mathscr{L}^\infty(\wG)\to B(\H_\pi)$ are the projections. We will also need the following facts:
$$V_\G \in \mathscr{L}^\infty(\wG)'\ovot \mathscr{L}^\infty(\G), \quad W_\G = V_{\wG}\in \mathscr{L}^\infty(\G)\ovot \mathscr{L}^\infty(\wG), \quad W_{\wG}\in \mathscr{L}^\infty(\wG)\ovot \mathscr{L}^\infty(\G)'.$$
The space $\hat{\lambda}(c_c(\wG))\lambda(\mathcal{O}(\G))$ is $\sigma$-weakly dense in $B(L^2(\G))$. 
If $g \in \mathcal{O}(\G)$, there is a unique normal functional
$\operatorname{ev}_g: \mathscr{L}^\infty(\wG)\to \mathbb{C}$ such that $\operatorname{ev}_g(\hat{\lambda}(\omega)) = \omega(g)$ for every $\omega \in c_c(\wG)$. Concretely, $\omega_{\xi, \eta}\circ p_\pi = \ev_{U_\pi(\xi, \eta)}$ for all $\xi, \eta \in \H_\pi$. We have $(\ev_g\ovot \ev_h)\circ \hat{\Delta}= \ev_{gh}$ for $g,h \in \mathcal{O}(\G)$ and $\hat{\epsilon}:= \ev_1 = \omega_{\xi_\G}$ is a character on $\mathscr{L}^\infty(\wG)$ satisfying 
$$(\id \ovot \hat{\epsilon})\hat{\Delta} = \id_{\mathscr{L}^\infty(\wG)} = (\hat{\epsilon}\ovot \id)\hat{\Delta}.$$ 

    If $\G$ is of Kac-type, then  on $B(L^2(\G))$ we have the identity
\begin{equation}\label{rem:Kacidentity}
(\id \ovot \omega_{\xi_\G})\hat{\Delta}_l = (\omega_{\xi_\G}\ovot \id)\hat{\Delta}_r.\end{equation}

\begin{ex}\label{example}
    \normalfont{Let $\Gamma$ be a discrete group with unit $e$ and consider the universal $C^*$-envelope $C^*(\Gamma)$ of the group $*$-algebra $\mathbb{C}[\Gamma]$. There is a unique $*$-morphism $\Delta: C^*(\Gamma)\to C^*(\Gamma)\otimes C^*(\Gamma)$ such that $\Delta(g) = g\otimes g$ for $g\in \Gamma$. The pair $\G = (C^*(\Gamma), \Delta)$ then defines a CQG. Its Haar state is given by $\varphi(g) = \delta_{g,e}$ for $g\in \Gamma$ and $\mathcal{O}(\G)=\mathbb{C}[\Gamma]$. We consider the Hilbert space $\ell^2(\Gamma)$ with canonical orthonormal basis $\{\delta_g\}_{g\in \Gamma}$. The left regular representation 
    $\lambda: C^*(\Gamma)\to B(\ell^2(\Gamma))$ together with the vector $\delta_e$ forms the GNS-construction with respect to the Haar state $\varphi$, and we thus see that $C_r(\G) = C_\lambda^*(\Gamma)$ and $\mathscr{L}^\infty(\G) = \mathscr{L}(\Gamma)$.

    It is straightforward to see that $c_c(\wG)= c_c(\Gamma)$, the finitely supported functions on $\Gamma$, with $\hat{\Gamma}(\varphi(-g)) = \delta_{g} = \hat{\Lambda}(\varphi(-g))$ for $g\in \Gamma$. Thus, $\hat{\lambda}(f)\delta_g = f(g^{-1})\delta_g$ for $g\in \Gamma$. Viewing $\ell^\infty(\Gamma)$ as multiplication operators on the Hilbert space $\ell^2(\Gamma)$, we have $\mathscr{L}^\infty(\wG) = \ell^\infty(\Gamma)$ and the coproduct $\hat{\Delta}: \ell^\infty(\Gamma)\to \ell^\infty(\Gamma)\ovot \ell^\infty(\Gamma)\cong \ell^\infty(\Gamma\times \Gamma)$ is given by $\hat{\Delta}(f)(g,h) = f(hg)$ for $f\in \ell^\infty(\Gamma)$ and $g,h \in \Gamma$.
    
    Finally, consider the orthogonal projection $e_g\in B(\ell^2(\Gamma))$ onto $\mathbb{C}\delta_g$ for $g\in \Gamma$. Then we have the following descriptions of the fundamental multiplicative unitaries
    $$V_\G = \sum_{g\in \Gamma} e_g \otimes \lambda_g, \quad W_\G = \sum_{g\in \Gamma} \lambda_g^*\otimes e_g, \quad W_{\wG}=\sum_{g\in \Gamma} e_g\otimes \rho_g^*$$
    where the sums converge in the strong operator topology. A word of caution: if $g\in \Gamma$, then $\ev_g$ as defined above for general discrete quantum groups does not agree with the usual definition of $\ev_g\in \ell^\infty(\Gamma)^*$. Rather, we have $\ev_g(f) = f(g^{-1})$ for $g\in \Gamma$ and $f\in \ell^\infty(\Gamma)$, which also explains the flip in the comultiplication.
    
    The CQG $\G$ discussed here is often denoted by $\widehat{\Gamma}$. 
    $\Di$}
\end{ex}

Throughout this entire paper, we will fix a CQG $\G$ with discrete dual $\wG$. If $X,Y$ are subsets of some normed algebra, then we write $XY= \lspan\{xy: x \in X, y \in Y\}$ and $[XY]$ is the norm-closure of $XY$.

\section{Actions of quantum groups}

In this section, we define actions of compact and discrete quantum groups on operator systems, establish their basic properties and show that these definitions generalise the definitions of actions known in the literature. Then, we define the reduced crossed product and the Fubini crossed product with respect to an operator system on which a DQG acts, and show that these crossed products admit actions of the dual CQG. These actions are then studied.

\subsection{Actions of compact quantum groups}

We will introduce two different notions of actions of a compact quantum group on an operator system. 
\begin{defn}\label{Gcstar}
    A right $\G$-$C^*$-operator system is a pair $(X, \alpha)$ where $X$ is an operator system and $\alpha: X \to X \otimes C_r(\G)$ is a uci map such that:
    \begin{itemize}
        \item The coaction property is satisfied, i.e. the diagram $$
\begin{tikzcd}
X \arrow[d, "\alpha"'] \arrow[rr, "\alpha"]          &  & X\otimes C_r(\G) \arrow[d, "\alpha \otimes \id"] \\
X\otimes C_r(\G) \arrow[rr, "\id \otimes \Delta"'] &  & X \otimes C_r(\G)\otimes C_r(\G)                
\end{tikzcd}$$
commutes. 
\item The Podleś density condition is satisfied, i.e. 
$[\alpha(X)(1\otimes C_r(\G))]= X \otimes C_r(\G).$
    \end{itemize}
    We will write $X \stackrel{\alpha}\curvearrowleft\G$ and say that $\alpha$ defines a right $\G$-$C^*$-action on the operator system $X$.
\end{defn}

We can define left actions in a similar way. In this paper, we will mainly consider right actions and all actions (also the $\G$-$W^*$-actions and $\wG$-actions which we will define later) are assumed to be right actions, unless otherwise specified. 

\begin{defn}\label{C*-action}
    Let $(X, \alpha)$ and $(Y, \beta)$ two $\G$-$C^*$-operator systems. A ucp map $\phi: X \to Y$ is called $\G$-$C^*$-equivariant if the diagram $$
\begin{tikzcd}
X \arrow[d, "\alpha"] \arrow[rr, "\phi"]        &  & Y \arrow[d, "\beta"] \\
X\otimes C_r(\G) \arrow[rr, "\phi \otimes \id"] &  & Y \otimes C_r(\G)   
\end{tikzcd}$$
commutes. When we want to emphasize the actions, we will denote the map by $\phi: (X, \alpha)\to (Y, \beta).$
\end{defn}

As can be expected, a $\G$-$C^*$-operator system admits a nice decomposition in spectral subspaces. In what follows, we deal with a setting that is more general than $\G$-$C^*$-operator systems. 

Given a ufd representation $\pi$ of $\G$, we will consider the associated right $\mathcal{O}(\G)$-comodule structure on $\H_\pi$ given by $\delta_\pi: \H_\pi \to \H_\pi \odot \mathcal{O}(\G): \xi\mapsto  U_\pi (\xi \otimes 1)$. Concretely, $\delta_\pi(e_i^\pi) = \sum_{j=1}^{n_\pi} e_j^\pi\otimes U_\pi(e_j^\pi, e_i^\pi)$ for every $1\le i \le n_\pi$.

\begin{defn}
    Suppose that $X$ is an operator system and $\alpha: X \to X \otimes C(\G)$ is a ccp map such that the coaction property and the Podleś density condition are satisfied. If  $\pi$ is a ufd representation of $\G$, the intertwiner space between $\pi$ and $\alpha$, denoted by $\Mor(\pi, \alpha)$, is the set of all linear maps $T: \H_\pi \to X$ such that the diagram
    $$
\begin{tikzcd}
\H_\pi \arrow[d, "\delta_\pi"'] \arrow[rr, "T"]            &  & X \arrow[d, "\alpha"] \\
\H_\pi\odot \mathcal{O}(\G) \arrow[rr, "T \odot \id"'] &  & X \otimes C(\G)    
\end{tikzcd}$$ commutes. We then define the $\pi$-spectral subspace 
$$X_\pi:= \lspan\{T\xi: T \in \Mor(\pi, \alpha), \xi \in \H_\pi\}.$$
\end{defn}
If $\tau\in \Irr(\G)$ is the trivial representation, then $X_\tau = \{x\in X: \alpha(x)= x \otimes 1\}= \operatorname{Fix}(X, \alpha)$.

The following result is well-known in the setting of actions of compact quantum groups on $C^*$-algebras. The proof contains no new elements, but we include it for the convenience of the reader.

\begin{prop}\label{algebraic core} Let $X$ be an operator system and $\alpha: X \to X \otimes C(\G)$ be a ccp map such that the coaction property and the  Podleś density condition are satisfied. Then $X_\pi$ is norm-closed in $X$ for every $\pi\in \Irr(\G)$. Moreover, $\mathcal{X}:= \sum_{\pi\in \Irr(\G)} X_\pi$ is norm-dense in $X$ and $$\mathcal{X}= \lspan\{(\id \otimes \varphi_\G)(\alpha(x)(1\otimes g)): x \in X, g \in \mathcal{O}(\G)\}.$$
The map $\alpha$ restricts to a Hopf $^*$-algebra coaction
$\alpha: \mathcal{X}\to \mathcal{X}\odot \mathcal{O}(\G).$ In particular, we have the identity
$\alpha(\mathcal{X})(1\otimes \mathcal{O}(\G))= \mathcal{X}\odot \mathcal{O}(\G).$ 

Moreover, if $\alpha$ is injective, then 
$\mathcal{X}=\{x\in X: \alpha(x)\in X\odot \mathcal{O}(\G)\}.$ 
\end{prop}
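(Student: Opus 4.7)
The strategy is to construct, for each $\pi\in\Irr(\G)$, an explicit bounded linear idempotent $P_\pi : X\to X$ whose image is exactly $X_\pi$, and then read all of the assertions off from this single construction. I would take
$$
P_\pi(x) := (\id \otimes \varphi_\G)(\alpha(x)(1 \otimes g_\pi)), \qquad g_\pi := \dim_q(\pi)\sum_{p} q_{\pi,p}^{1/2}(u_{pp}^\pi)^* \in \mathcal{O}(\G),
$$
so that by the Peter--Weyl orthogonality relations the functional $\psi_\pi(b) := \varphi_\G(bg_\pi)$ satisfies $\psi_\pi(u_{ij}^\chi) = \delta_{\chi\pi}\delta_{ij}$; in other words, $\psi_\pi$ reproduces the algebraic counit $\epsilon$ on $\mathcal{O}(\G)_\pi$ and annihilates every other spectral component.

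\medskip

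The coaction identity gives $\alpha\circ P_\pi = (\id\otimes E_\pi)\circ\alpha$ with $E_\pi := (\id\otimes\psi_\pi)\Delta$, and a direct check on matrix coefficients identifies $E_\pi$ as the Peter--Weyl projection $C(\G)\to\mathcal{O}(\G)_\pi$; the equality $\psi_\pi\circ E_\pi = \psi_\pi$ then forces $P_\pi^2 = P_\pi$. Writing $\alpha(P_\pi(x)) = \sum_{ij} z_{ij}\otimes u_{ij}^\pi \in X\odot\mathcal{O}(\G)_\pi$ and matching coefficients in $(\alpha\otimes\id)\alpha = (\id\otimes\Delta)\alpha$ yields $\alpha(z_{ab}) = \sum_i z_{ib}\otimes u_{ia}^\pi$; this is precisely the intertwining condition saying that, for each fixed $b$, the map $e_a^\pi\mapsto z_{ab}$ belongs to $\Mor(\pi,\alpha)$, so every $z_{ij}\in X_\pi$ and $P_\pi(x) = \sum_i z_{ii}\in X_\pi$. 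Conversely, for $x = T\xi\in X_\pi$ the intertwining property of $T$ combined with $\psi_\pi\vert_{\mathcal{O}(\G)_\pi} = \epsilon$ directly yields $P_\pi(x) = x$, so $X_\pi = \operatorname{image}(P_\pi) = \ker(\id - P_\pi)$ is norm-closed. The inclusion $\mathcal{X}\subseteq \lspan\{(\id\otimes\varphi_\G)(\alpha(x)(1\otimes g)) : x\in X, g\in\mathcal{O}(\G)\}$ is now immediate from $x = P_\pi(x)$; for the reverse inclusion, the same coaction computation with $g\in\mathcal{O}(\G)_\sigma$ replacing $g_\pi$ and a further orthogonality check show that the slice $(\id\otimes\varphi_\G)(\alpha(x)(1\otimes g))$ is a fixed point of $P_{\bar\sigma}$, hence lies in $X_{\bar\sigma}\subseteq\mathcal{X}$.

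\medskip

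Norm-density of $\mathcal{X}$ in $X$ now follows from Podleś density: given $x\in X$ and $\varepsilon > 0$, approximate $x\otimes 1$ in $X\otimes C(\G)$ by $\sum_k \alpha(y_k)(1\otimes c_k)$ with $c_k\in\mathcal{O}(\G)$ (by norm-density of $\mathcal{O}(\G)$ in $C(\G)$), and then apply the contractive slice $\id\otimes\varphi_\G$ to land inside the span just identified with $\mathcal{X}$. For the Hopf $*$-algebra coaction, the identity $\alpha(T\xi) = \sum_i T(e_i^\pi)\otimes U_\pi(e_i^\pi,\xi)\in X_\pi\odot\mathcal{O}(\G)_\pi$ is immediate from the definition of $\Mor(\pi,\alpha)$, and the refined Podleś identity $\alpha(\mathcal{X})(1\otimes\mathcal{O}(\G)) = \mathcal{X}\odot\mathcal{O}(\G)$ is a routine algebraic manipulation with $\Delta$ and the antipode on $\mathcal{O}(\G)$. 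Finally, for the last statement, if $\alpha$ is injective then coassociativity gives $\alpha((\id\otimes\epsilon)\alpha(x)) = \alpha(x)$ for every $x$ with $\alpha(x)\in X\odot\mathcal{O}(\G)$ (using $(\id\otimes\epsilon)\Delta = \id$), hence $(\id\otimes\epsilon)\alpha(x) = x$, and the intertwiner extraction of the second paragraph then places $x$ inside $\mathcal{X}$. The main obstacle I anticipate is the non-Kac bookkeeping: both the precise Woronowicz character factors $q_{\pi,p}^{\pm 1/2}$ in $g_\pi$ and the orthogonality identity used in the reverse inclusion rely on careful manipulation of the two forms of the Peter--Weyl orthogonality relations, and if these factors are mis-placed the idempotent $P_\pi$ fails to fix $X_\pi$ pointwise.
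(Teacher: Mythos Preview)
Your proof is correct and follows essentially the same approach as the paper: both construct the bounded idempotent $P_\pi$ (the paper calls it $E_\pi$) via the element $g_\pi = \chi_\pi^* = \dim_q(\pi)\sum_i q_{\pi,i}^{1/2}(u_{ii}^\pi)^*$ and the Haar state, verify that its image is exactly $X_\pi$, and read off closedness, the description of $\mathcal{X}$, density, and the algebraic coaction from this. The only visible difference is in the last assertion: for $x$ with $\alpha(x)\in X\odot\mathcal{O}(\G)$ you slice with the counit $\epsilon$ directly to obtain $\alpha((\id\odot\epsilon)\alpha(x)) = \alpha(x)$, while the paper picks $\omega\in c_c(\wG)$ with $(\id\odot\omega)\Delta(g_i)=g_i$ for the finitely many $g_i$ appearing in $\alpha(x)$ and argues $\alpha((\id\odot\omega)\alpha(x)) = \alpha(x)$; since $\epsilon$ is only defined on $\mathcal{O}(\G)$ both devices serve the same purpose and the conclusions are identical.
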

\begin{proof} If $\pi\in \Irr(\G)$, define $\chi_\pi:= \dim_q(\pi) \sum_{i=1}^{n_\pi} U_\pi(e_i^\pi, Q_\pi^{1/2}e_i^\pi)\in \mathcal{O}(\G)$ and consider the map
$$E_\pi: X\to X: x \mapsto (\id \otimes \varphi_{\G})(\alpha(x)(1\otimes \chi_\pi^*)).$$
If $x\in X$ and $\eta \in \H_\pi$, the map
$$T: \H_\pi \to X: \xi \mapsto (\id \otimes \varphi_\G)(\alpha(x)(1\otimes U_\pi(\xi, \eta)^*))$$ is an element of $\Mor(\pi, \alpha)$ which shows that $E_\pi(X)\subseteq X_\pi$. Combining this with the fact that $E_\pi(x) = x$ for $x\in X_\pi$, we conclude that $X_\pi=\{x\in X: E_\pi(x)= x\}$ is norm-closed. Clearly
$$\lspan \{(\id \otimes \varphi_\G)(\alpha(x)(1\otimes g)): x \in X, g \in \mathcal{O}(\G)\}=\mathcal{X}.$$
By the Podleś density condition, it follows that $\mathcal{X}$ is norm-dense in $X$. By definition, $\alpha(X_\pi)\subseteq X_\pi\odot \mathcal{O}(\G)_\pi$ so $\mathcal{X}\subseteq \{x\in X: \alpha(x)\in \mathcal{X} \odot \mathcal{O}(\G)\}$. It easily follows that $\alpha$ restricts to a Hopf $^*$-algebra coaction $\alpha: \mathcal{X}\to \mathcal{X}\odot \mathcal{O}(\G)$. 

Finally, assume that $\alpha$ is injective, and  $x\in X$ satisfies $\alpha(x)\in X \odot \mathcal{O}(\G)$. Write $\alpha(x)= \sum_{i=1}^n x_i\otimes g_i$. Choose $\omega \in c_c(\wG)\subseteq \mathcal{O}(\G)'$ such that $(\id \odot \omega)\Delta(g_i)= g_i$ for all $i=1, \dots, n$. Then
\begin{align*}
    \alpha((\id \odot \omega)\alpha(x)) = (\id \odot \id \odot \omega)(\alpha \odot \id)\alpha(x) = (\id \odot \id \odot \omega)(\id \odot \Delta)\alpha(x)=\alpha(x)
\end{align*}
and it follows that $x= (\id \odot \omega)\alpha(x) \in \mathcal{X}.$
\end{proof}

\begin{defn}
    The space $\mathcal{X}$ from Proposition \ref{algebraic core} is called the algebraic core of $\alpha$.
\end{defn}

It is natural to ask if we find back the usual notion of an action of a CQG on a $C^*$-algebra if we consider a $C^*$-algebra in Definition \ref{Gcstar}. This turns out to be true, but we need to prove a lemma first:

\begin{lma}\label{useful}
    Let $\H$ be a finite-dimensional Hilbert space and $V\in B(\H)\otimes C(\G)$ an invertible contraction such that $(\id \otimes \Delta)(V) = V_{12}V_{13}$. Then $V$ defines a ufd representation of $\G$.
\end{lma}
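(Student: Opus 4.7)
The plan is to show that the invertible contraction $V$ must be unitary; together with the corepresentation relation this makes $V$ into a ufd representation.

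The first step is to introduce $P := (\id \otimes \varphi_\G)(VV^*) \in B(\H)$. Invertibility of $V$ in $B(\H) \otimes C(\G)$ forces $VV^* \geq \|V^{-1}\|^{-2}\cdot 1$, so $P$ is positive invertible in $B(\H)$, while contractivity of $V$ gives $P \leq I$. The key identity to derive is $V(P \otimes 1)V^* = P \otimes 1$. To obtain it, apply $\id \otimes \id \otimes \varphi_\G$ to both sides of $(\id \otimes \Delta)(VV^*) = V_{12}V_{13}V_{13}^*V_{12}^*$: the left-hand side equals $P \otimes 1$ by right-invariance of the Haar state, while on the right $V_{12}$ has trivial third leg and so commutes past the slice, producing $V \cdot [(\id\otimes\id\otimes\varphi_\G)(V_{13}V_{13}^*)] \cdot V^* = V(P\otimes 1)V^*$.

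The payoff is then immediate. Contractivity of $V$ implies that the element $V((I-P)\otimes 1)V^* = VV^* - V(P\otimes 1)V^* = VV^* - P\otimes 1$ is positive (since $I - P \geq 0$), while its Haar expectation is $P - P = 0$. Assuming $\G$ is reduced so that $\varphi_\G$ is faithful on $C(\G)$, a positive element of $B(\H) \otimes C(\G)$ annihilated by $\id \otimes \varphi_\G$ must vanish (its diagonal matrix entries have zero Haar integral hence are zero, and a positive matrix with zero diagonal is zero). Hence $V((I-P)\otimes 1)V^* = 0$, and invertibility of $V$ gives $(I-P)\otimes 1 = 0$, i.e.\ $P = I$. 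Therefore $VV^* = P \otimes 1 = 1$, and combined with invertibility this forces $V^* = V^{-1}$, so $V$ is unitary.

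The main obstacle is removing the reducedness hypothesis, since $\varphi_\G$ need not be faithful on a general $C^*$-completion $C(\G)$. The plan is to pass first to $\lambda(V) \in B(\H) \otimes C_r(\G)$, which remains an invertible contraction satisfying the corepresentation relation (via $\Delta_r \circ \lambda = (\lambda \otimes \lambda) \circ \Delta$), and apply the argument above to conclude that $\lambda(V)$ is a unitary ufd representation of $\G_r$. One then transfers unitarity back to $V$ itself: the matrix coefficients of $V$ span a finite-dimensional $\Delta$-invariant subspace of $C(\G)$ and hence lie in $\mathcal{O}(\G)$ by the structure theory of coalgebras for CQGs, so $V \in B(\H) \otimes \mathcal{O}(\G)$; the Hopf $*$-algebra isomorphism $\mathcal{O}(\G) \cong \mathcal{O}(\G_r)$ induced by $\lambda$ then upgrades the unitarity of $\lambda(V)$ to the unitarity of $V$.
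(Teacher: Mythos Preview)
Your argument is correct and is in fact somewhat more streamlined than the paper's. Both proofs hinge on the same mechanism---producing a positive operator $Q\in B(\H)$ with $V(Q\otimes 1)V^* = Q\otimes 1$ and then squeezing out $VV^*=1$ from contractivity and faithfulness of the Haar state---but the paper arrives at this fixed-point relation indirectly: it first invokes the standard unitarizability theorem to write $V=(T^{-1}\otimes 1)U(T\otimes 1)$ with $U$ unitary, then builds an $U$-invariant state $\omega$ on $B(\H)$ via $\omega=\tau\circ(\id\otimes\varphi_\G)\circ\beta_U$ and uses it to upgrade the inequality $U(TT^*\otimes 1)U^*\le TT^*\otimes 1$ to an equality. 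Your route is more direct: you read the relation $V(P\otimes 1)V^*=P\otimes 1$ straight off the corepresentation identity and right-invariance of $\varphi_\G$, without ever needing to unitarize. Both proofs silently use faithfulness of $\varphi_\G$ on $C(\G)$; you are the only one to flag this and supply the reduction to $C_r(\G)$ for the general case, which the paper does not do (the lemma is only applied when $\G$ is reduced). One small comment on that reduction: rather than invoking ``structure theory of coalgebras'' to place the matrix coefficients of $V$ in $\mathcal{O}(\G)$, it is quicker to cite the unitarizability theorem once, since $V=(T^{-1}\otimes 1)U(T\otimes 1)$ with $U$ unitary immediately gives $V\in B(\H)\odot\mathcal{O}(\G)$, and then injectivity of $\lambda$ on $\mathcal{O}(\G)$ transfers unitarity back.
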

\begin{proof}
    Choose an invertible element $T\in \B(\H)$ such that $U:=(T\otimes 1)V(T^{-1}\otimes 1)$ is a unitary. Then $V= (T^{-1}\otimes 1)U(T\otimes 1)$ is contractive, and in particular $VV^* \le 1$, i.e.
     $$(T^{-1}\otimes 1)U(T\otimes 1)(T^*\otimes 1)U^*((T^*)^{-1}\otimes 1)\le 1$$
    so conjugating with $T$ gives 
    $U(TT^*\otimes 1)U^*\le TT^*\otimes 1.$
    Define the $*$-morphism $\beta_U: B(\H)\to B(\H)\otimes C(\G): x \mapsto U(x\otimes 1)U^*$ and define the faithful conditional expectation 
    $E: B(\H)\to B(\H)$ by $E:= (\id \otimes \varphi_\G)\circ \beta_U$. Let $\tau$ be a faithful state on $B(\H)$ (for example, normalised trace) and define $\omega:= \tau \circ E$. We calculate for $x\in B(\H)$, 
    \begin{align*}
        (\omega \otimes \id)\beta_U(x) &= (\tau \otimes \id)(\id \otimes \varphi_\G\otimes \id)(\beta_U\otimes \id)\beta_U(x)\\
        &= (\tau \otimes \id)(\id \otimes \varphi_\G\otimes \id)(\id \otimes \Delta)\beta_U(x)\\
        &= (\tau \otimes \id)((\id \otimes \varphi_\G)(\beta_U(x))\otimes 1)\\
        &= \tau(E(x))1 = \omega(x)1.
    \end{align*}
    In particular, we see that
    $0=(\omega \otimes \id)(TT^*\otimes 1-U(TT^*\otimes 1)U^*)$
    so by faithfulness of $\omega \otimes \id$, it follows that $TT^*\otimes 1 = U(TT^*\otimes 1)U^*$. Hence,
$$VV^* = (T^{-1}\otimes 1)U(TT^*\otimes 1)U^*((T^{-1})^*\otimes 1)=(T^{-1}\otimes 1)(TT^*\otimes 1)((T^{-1})^*\otimes 1)=1.$$
Since $V$ is invertible, it follows that $V$ is unitary.
\end{proof}
\begin{prop}
If $A$ is a $C^*$-algebra, $\G$ a reduced CQG and $\alpha: A \to A \otimes C(\G)$ is an injective ccp map satisfying the coaction property $(\id \otimes \Delta)\alpha = (\alpha\otimes \id)\alpha$ and the Podleś density condition, then $\alpha$ is multiplicative.  
\end{prop}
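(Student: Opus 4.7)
The strategy is to show that $\alpha$ restricts to a $*$-homomorphism on the norm-dense subspace $\mathcal{A} := \sum_{\pi \in \Irr(\G)} A_\pi$ produced by Proposition \ref{algebraic core}, and then conclude by continuity of multiplication in $A \otimes C(\G)$. Since $\alpha$ is a ccp map between $C^*$-algebras, it is automatically $*$-preserving; so by the standard theory of the multiplicative domain of a ccp map, it suffices to prove the equality $\alpha(a^*a) = \alpha(a)^*\alpha(a)$ for every $a$ in each spectral subspace $A_\pi$. The Kadison–Schwarz inequality gives the inequality $\alpha(a^*a) \geq \alpha(a)^*\alpha(a)$ for free; the whole work is to upgrade it to an equality.

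To set up the geometry, fix $\pi \in \Irr(\G)$ with $n = n_\pi$, take an intertwiner $T \in \Mor(\pi, \alpha)$, and set $a_i := Te_i^\pi$. A direct calculation from the intertwiner condition shows that the row vector $X := \sum_i E_{1i} \otimes a_i \in M_n \otimes A$ obeys the clean leg-numbering identity
\[
(\id_{M_n} \otimes \alpha)(X) \;=\; X_{12}\, U_{\pi,13} \qquad \text{in } M_n \otimes A \otimes C(\G),
\]
where $U_\pi = \sum_{ij} E_{ij} \otimes u_{ij}^\pi$ is the unitary corepresentation attached to $\pi$. Feeding this into the matricial Kadison–Schwarz inequality for the ccp map $\id_{M_n} \otimes \alpha$ gives
\[
(\id_{M_n} \otimes \alpha)(X^*X) \;\geq\; U_{\pi,13}^*\, X_{12}^* X_{12}\, U_{\pi,13},
\]
and equality here is exactly the statement $\alpha(a_i^* a_j) = \alpha(a_i)^*\alpha(a_j)$ for all $i,j$.

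To promote inequality to equality, I would iterate using coassociativity. Applying $\id \otimes \alpha \otimes \id$ to both sides, invoking $(\alpha \otimes \id)\alpha = (\id \otimes \Delta)\alpha$, and using the corepresentation identity $(\id \otimes \Delta)(U_\pi) = U_{\pi,12}U_{\pi,13}$, the K--S slack propagates in a controlled way. Combining the resulting inequalities one extracts from $X^*X$, $\alpha$ and $U_\pi$ an auxiliary operator $V \in B(\H_\pi) \otimes C(\G)$ which is an invertible contraction and which satisfies the corepresentation identity $(\id \otimes \Delta)(V) = V_{12}V_{13}$. At this point Lemma \ref{useful} applies and forces $V$ to be unitary, which in turn forces the K--S slack to vanish. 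Tracing back through the construction yields $\alpha(a_i^* a_j) = \alpha(a_i)^*\alpha(a_j)$, as required. Polarisation then gives multiplicativity on $A_\pi$, summing over $\pi \in \Irr(\G)$ gives it on $\mathcal{A}$, and norm-density plus joint norm-continuity of multiplication transfers the identity to all of $A$.

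The main obstacle is the explicit construction of the auxiliary operator $V$: the K--S slack lives in $M_n(A \otimes C(\G))$ rather than in $B(\H_\pi) \otimes C(\G)$, so one has to carefully scalar-ise the matrix inequality, typically by slicing with the Haar state $\varphi_\G$ and exploiting the orthogonality relations for the matrix coefficients $u_{ij}^\pi$, before Lemma \ref{useful} can bite. Reducedness of $\G$ enters precisely here, ensuring that $\varphi_\G$ is faithful on $C(\G)$ so that vanishing after slicing implies vanishing of the original positive element, and injectivity of $\alpha$ is what rules out the positive residue being absorbed into the kernel.
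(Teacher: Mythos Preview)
Your overall architecture --- reduce to the algebraic core $\mathcal{A}$, exploit Kadison--Schwarz, and invoke Lemma~\ref{useful} to close the gap --- matches the paper's. But the step you yourself flag as ``the main obstacle'', namely producing the auxiliary operator $V$ to which Lemma~\ref{useful} applies, is not a detail to be filled in later: it is the substance of the proof, and your sketch does not supply it. You propose a $V \in B(\H_\pi)\otimes C(\G)$, but the K--S slack lives in $M_n(A\otimes C(\G))$ with genuine $A$-coefficients, and ``slicing with $\varphi_\G$'' kills the $C(\G)$ leg rather than the $A$ leg, so it cannot land you in $B(\H_\pi)\otimes C(\G)$. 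Iterating coassociativity only pushes the $A$-coefficients around; it does not remove them.

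The paper's device for scalarising on the $A$ side is to pass through GNS representations for \emph{invariant} positive functionals $\omega$ on $A$, i.e.\ those with $(\omega\otimes\id)\alpha = \omega(\cdot)1$. On the GNS space $L^2_\omega(A)$, the matricial K--S inequality together with invariance of $\omega$ is precisely what shows that the assignment $\Lambda_\omega(a)\otimes\Lambda(h)\mapsto (\Lambda_\omega\odot\Lambda)(\alpha(a)(1\otimes h))$ is a well-defined contraction $V_\omega$. Its restriction to $\Lambda_\omega(C)\otimes L^2(\G)$ for any finite-dimensional subcomodule $C\subseteq\mathcal{A}$ is then an invertible contraction $V_C\in B(\Lambda_\omega(C))\otimes C(\G)$ satisfying the corepresentation identity, and Lemma~\ref{useful} applies to \emph{that} operator --- the Hilbert space is $\Lambda_\omega(C)$, not $\H_\pi$. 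Unitarity of each $V_C$ makes $V_\omega$ isometric, giving $(\omega\otimes\varphi_\G)(\alpha(a^*a)-\alpha(a)^*\alpha(a))=0$ for $a\in\mathcal{A}$. Your instincts about where reducedness and injectivity of $\alpha$ enter are correct: faithfulness of $\varphi_\G$ strips off the $C(\G)$ leg to give $(\omega\otimes\id)(\alpha(a^*a)-\alpha(a)^*\alpha(a))=0$, and then one runs $\omega$ over the invariant functionals $\sigma\circ E$ with $E=(\id\otimes\varphi_\G)\alpha$ and uses that $E\otimes\id$ is faithful (equivalent to injectivity of $\alpha$) to conclude $\alpha(a^*a)=\alpha(a)^*\alpha(a)$ on $\mathcal{A}$.
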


\begin{proof} Let $\omega$ be a positive invariant functional on $A$, i.e. $(\omega \otimes \id)(\alpha(a)) = \omega(a)1$ for all $a\in A$. Consider its associated GNS-Hilbert space $L^2_\omega(A)$ with GNS-vector $\xi_\omega$ and GNS-map $\Lambda_\omega: A \to L_\omega^2(A)$.

Consider the algebraic core $\mathcal{A}$ of the action $\alpha$.\footnote{The statement of Proposition \ref{algebraic core} is also true if the operator system is replaced by a (non-unital) $C^*$-algebra. The proof is exactly the same.} Let $a_1, \dots, a_n\in \mathcal{A}$ and $h_1, \dots, h_n \in \mathcal{O}(\G)$. Then using the fact that $[(\pi_\omega \otimes \lambda)(\alpha(a_i)^*\alpha(a_j))]_{i,j}\le [(\pi_\omega \otimes \lambda)(\alpha(a_i^*a_j))]_{i,j}$ \cite[Lemma 5.4]{Lance}, we find
\begin{align*}
    &\left\| \sum_i(\Lambda_\omega \odot \Lambda)( \alpha(a_i)(1\otimes h_i))\right\|^2\\
    &= \sum_{i,j}\langle (\Lambda_\omega \odot \Lambda)(\alpha(a_i)(1\otimes h_i)), (\Lambda_\omega\odot \Lambda)(\alpha(a_j)(1\otimes h_j)) \rangle\\
    &= \sum_{i,j}\langle \Lambda_\omega(1)\odot \Lambda(h_i), (\pi_\omega \otimes \lambda)(\alpha(a_i)^*\alpha(a_j))(\Lambda_\omega(1)\otimes \Lambda(h_j))\rangle\\
    &= \left\langle \begin{pmatrix}\Lambda_\omega(1)\otimes \Lambda(h_1) \\
    \vdots\\
    \Lambda_\omega(1)\otimes \Lambda(h_n)\end{pmatrix}, [(\pi_\omega \otimes \lambda)(\alpha(a_i)^*\alpha(a_j))]_{i,j}\begin{pmatrix}\Lambda_\omega(1)\otimes \Lambda(h_1) \\
    \vdots\\
    \Lambda_\omega(1)\otimes \Lambda(h_n)\end{pmatrix}\right\rangle\\
   &\le \left\langle \begin{pmatrix}\Lambda_\omega(1)\otimes \Lambda(h_1) \\
    \vdots\\
    \Lambda_\omega(1)\otimes \Lambda(h_n)\end{pmatrix}, [(\pi_\omega \otimes \lambda)(\alpha(a_i^*a_j))]_{i,j}\begin{pmatrix}\Lambda_\omega(1)\otimes \Lambda(h_1) \\
    \vdots\\
    \Lambda_\omega(1)\otimes \Lambda(h_n)\end{pmatrix}\right\rangle\\
    &= \sum_{i,j}\langle \Lambda_\omega(1)\otimes \Lambda(h_i), (\pi_\omega \otimes \lambda)(\alpha(a_i^*a_j))(\Lambda_\omega(1)\otimes \Lambda(h_j))\rangle\\
    &= \sum_{i,j} (\omega \otimes \varphi_\G)((1\otimes h_i^*)\alpha(a_i^*a_j)(1\otimes h_j))\\
    &= \sum_{i,j}\varphi_\G( h_i^* (\omega \otimes \id)(\alpha(a_i^*a_j))h_j)\\
    &= \sum_{i,j}\varphi_\G(h_i^*\omega(a_i^* a_j)h_j)\\
    &= \left\|\sum_{i} \Lambda_\omega(a_i)\otimes \Lambda(h_i)\right\|^2.
\end{align*}
Hence, there exists a unique contraction $V=V_{\omega}: L^2_\omega(A)\otimes L^2(\G)\to L^2_\omega(A)\otimes L^2(\G)$ such that $$V(\Lambda_\omega(a)\otimes \Lambda(h))= (\Lambda_\omega \odot \Lambda)(\alpha(a)(1\otimes h))$$ for $a\in \mathcal{A}$ and $h\in \mathcal{O}(\G)$. Let $C$ be any finite-dimensional linear subspace of $\mathcal{A}$ such that $\alpha(C)\subseteq C \odot \mathcal{O}(\G)$. Since $\alpha$ is a coaction and $V$ is well-defined, we can well-define the corepresentation
$$\delta_C: \Lambda_\omega(C)\to \Lambda_\omega(C)\odot \mathcal{O}(\G): \Lambda_\omega(a)\mapsto (\Lambda_\omega \odot \id)(\alpha(a)).$$
Then consider the linear map $$V_C: \Lambda_\omega(C) \odot \mathcal{O}(\G) \to \Lambda_\omega(C)\odot \mathcal{O}(\G): \Lambda_\omega(a)\otimes h \mapsto (\Lambda_\omega \odot \id)(\alpha(a)(1\otimes h)) = \delta_C(\Lambda_\omega(a))(1\otimes h).$$
Then $V_C$ is in the image of the linear embedding $B(\Lambda_\omega(C))\odot \mathcal{O}(\G)\hookrightarrow \operatorname{End}_{\mathbb{C}}(\Lambda_\omega(C)\odot \mathcal{O}(\G))$ and thus $V_C$ can be viewed as an invertible element inside $B(\Lambda_\omega(C))\otimes C(\G)\subseteq B(\Lambda_\omega(C)\otimes L^2(\G))$ satisfying $(\id \otimes \Delta)(V_C) = V_{C,12}V_{C,13}$. Moreover, $V_C$ then agrees with the contraction $V$ on the subspace $\Lambda_\omega(C)\odot \Lambda(\mathcal{O}(\G))$, so that $V_C$ is contractive. By Lemma \ref{useful}, $V_C$ is a unitary element of the $C^*$-algebra $B(\Lambda_\omega(C))\otimes C(\G)$. It follows therefore that $V$ is isometric on the subspace $\Lambda_\omega(C)\odot \Lambda(\mathcal{O}(\G))$. Since $C$ is an arbitrary finite-dimensional subcomodule of $\mathcal{A}$, it follows that $V$ is isometric. In particular, we thus see that if $a\in \mathcal{A}$, then
\begin{align*}
    (\omega \otimes \varphi_\G)(\alpha(a)^*\alpha(a))= \|(\Lambda_\omega \odot \Lambda)(\alpha(a))\|^2 = \|\Lambda_\omega(a)\otimes \Lambda(1)\|^2= (\omega \otimes \varphi_\G)(\alpha(a^*a)).
\end{align*}
Using faithfulness of $\varphi_\G$, we find
$$(\omega \otimes \id)\alpha(a^*a) = (\omega \otimes \id)(\alpha(a)^*\alpha(a)), \quad a \in \mathcal{A}.$$
This holds for all positive invariant functionals $\omega$ on $A$, in particular it also holds for the functionals $\sigma \circ E$ where $\sigma$ is a positive functional on $A$ and $E: A\to A: a \mapsto (\id \otimes \varphi_\G)(\alpha(a))$. Therefore, we get
$$(E \otimes \id)(\alpha(a^*a)) = (E\otimes \id)(\alpha(a)^*\alpha(a)).$$ 
Since $E\otimes \id$ is faithful on $A\otimes C(\G)$ (by injectivity of $\alpha$), we conclude that
$$\alpha(a^*a)= \alpha(a)^*\alpha(a)$$
for all $a\in \mathcal{A}$. By continuity of $\alpha$, this also holds for all $a\in A$. It then follows that the multiplicative domain of $\alpha$ is $A$, as desired.\end{proof}

We now introduce a second notion of action of a CQG on an operator system. The tensor product that occurs in the following definition is the Fubini tensor product, which was discussed in the preliminaries of this paper.

\begin{defn}\label{GWstar}
   A right $\G$-$W^*$-operator system is a pair $(X, \alpha)$ where $X$ is an operator system and $\alpha: X \to X \ovot \mathscr{L}^\infty(\G)$ is a uci map such that the coaction property is satisfied, i.e. the diagram $$
\begin{tikzcd}
X \arrow[d, "\alpha"'] \arrow[rr, "\alpha"]                      &  & X\ovot \mathscr{L}^\infty(\G) \arrow[d, "\alpha \ovot \id"]  \\
X\ovot \mathscr{L}^\infty(\G) \arrow[rr, "\id \ovot \Delta"'] &  & X \ovot \mathscr{L}^\infty(\G) \ovot \mathscr{L}^\infty(\G)
\end{tikzcd}$$
commutes.
\end{defn}
The notion of $\G$-$W^*$-equivariant map is defined in the obvious way: in Definition \ref{C*-action}, we just have to replace $C_r(\G)$ by $\mathscr{L}^\infty(\G)$ and the spatial tensor product by the Fubini tensor product.

\begin{defn}
If $(X, \alpha)$ is a $\G$-$W^*$-operator system, we define the algebraic regular elements and the regular elements by
$$\mathcal{R}_{\operatorname{alg}}(X, \alpha):= \{x\in X: \alpha(x)\in X \odot \mathcal{O}(\G_r)\},\quad \quad \mathcal{R}(X, \alpha):= \overline{\mathcal{R}_{\operatorname{alg}}(X, \alpha)}^{\|\cdot\|}.$$
\end{defn}
If $(X, \alpha)$ is a $\G$-$W^*$-operator system, it is easy to see that $\alpha(\mathcal{R}(X, \alpha))\subseteq \mathcal{R}(X, \alpha)\otimes C_r(\G)$ and that we obtain a $\G$-$C^*$-operator system for the restriction of the action. The Podleś condition follows because we have the identity
$$\mathcal{R}_{\operatorname{alg}}(X, \alpha)\odot \mathcal{O}(\G_r) = \alpha(\mathcal{R}_{\operatorname{alg}}(X, \alpha))(1\otimes \mathcal{O}(\G_r))$$
on the algebraic level. Conversely, if $(X, \alpha)$ is a $\G$-$C^*$-operator system, we can view it as a $\G$-$W^*$-operator system because $X\otimes C_r(\G)\subseteq X \ovot \mathscr{L}^\infty(\G)$.

Understanding one particular $\G$-$W^*$-action will be crucial for this work:

\begin{prop}\label{important action II} Let $X\subseteq B(\H)$ be an operator system and consider the map $\Delta: B(L^2(\G))\to B(L^2(\G))\ovot \mathscr{L}^\infty(\G): x \mapsto V_\G(x\otimes 1)V_\G^*$. Then $\id_X \ovot \Delta$ defines a right $\G$-$W^*$-action on $X \ovot B(L^2(\G))$ such that \begin{align*}&\mathcal{R}_{\operatorname{alg}}(X \ovot B(L^2(\G)), \id_X\ovot \Delta) = (X \ovot \mathscr{L}^\infty(\wG))(1\otimes \lambda(\mathcal{O}(\G))),\\ &\operatorname{Fix}(X\ovot B(L^2(\G)), \id_X\ovot \Delta)=X\ovot \mathscr{L}^\infty(\wG).\end{align*}
\end{prop}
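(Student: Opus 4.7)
The plan is to verify the three claims sequentially, using the multiplicative unitary $V := V_\G$ as the main tool. Since $V$ is a unitary in $\mathscr{L}^\infty(\wG)' \ovot \mathscr{L}^\infty(\G)$, the formula $\Delta(x) = V(x \otimes 1)V^*$ defines a $\sigma$-weakly continuous unital $*$-homomorphism $B(L^2(\G)) \to B(L^2(\G)) \ovot \mathscr{L}^\infty(\G)$, hence a uci map, so $\beta := \id_X \ovot \Delta$ is a well-defined uci map into $X \ovot B(L^2(\G)) \ovot \mathscr{L}^\infty(\G)$. For the coaction property, Fubini reduces the claim to the coassociativity $(\Delta \ovot \id)\Delta = (\id \ovot \Delta)\Delta$ on all of $B(L^2(\G))$, which I would deduce from the pentagon equation $V_{12}V_{13}V_{23} = V_{23}V_{12}$: both sides are conjugates of $x \otimes 1 \otimes 1$ by a product of $V_{ij}$'s, and since $x \otimes 1 \otimes 1$ commutes with $V_{23}$, the pentagon forces them to coincide.

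For the fixed-point identification, the inclusion $X \ovot \mathscr{L}^\infty(\wG) \subseteq \operatorname{Fix}(X \ovot B(L^2(\G)), \beta)$ follows because $V$ commutes with $z \otimes 1$ for every $z \in \mathscr{L}^\infty(\wG)$, so $\Delta$ acts trivially on $\mathscr{L}^\infty(\wG)$, and Fubini extends this to $\beta(y) = y \otimes 1$ for $y \in X \ovot \mathscr{L}^\infty(\wG)$. For the converse, I would interpret $\beta(y) = y \otimes 1$ as the commutation of $V_{23}$ with $y \otimes 1_{L^2(\G)}$ in $B(\H) \ovot B(L^2(\G)) \ovot B(L^2(\G))$ and slice by normal functionals $\chi$ on the third tensor factor to obtain that $y$ commutes with $1_{B(\H)} \otimes (\id \ovot \chi)(V)$ for every $\chi \in B(L^2(\G))_*$. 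Since the first leg of $V_\G$ generates $\mathscr{L}^\infty(\wG)'$ as a von Neumann algebra (a standard property of the multiplicative unitary of a compact quantum group), $y$ commutes with $1_{B(\H)} \otimes \mathscr{L}^\infty(\wG)'$, and Tomita's commutation theorem together with the Fubini slice-characterisation of $X \ovot \mathscr{L}^\infty(\wG)$ force $y \in X \ovot \mathscr{L}^\infty(\wG)$.

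For the algebraic regular elements, the inclusion $\supseteq$ is a direct computation using the identity $\beta(y(1 \otimes a)) = \beta(y)(1 \otimes \Delta(a))$ (which is immediate from $\beta(w) = V_{23}(w \otimes 1)V_{23}^*$), the already-established fixed-point property of $X \ovot \mathscr{L}^\infty(\wG)$, and $\Delta(\lambda(\mathcal{O}(\G))) \subseteq \mathcal{O}(\G_r) \odot \mathcal{O}(\G_r)$. For the reverse inclusion, I would apply Proposition \ref{algebraic core} to the $\G$-$C^*$-operator system $\mathcal{R}(X \ovot B(L^2(\G)), \beta)$ to decompose $\mathcal{R}_{\operatorname{alg}}$ into $\pi$-spectral subspaces indexed by $\pi \in \Irr(\G)$. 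Given an intertwiner $(y_i)_i$ with $\beta(y_j) = \sum_i y_i \otimes \lambda(u_{ij}^\pi)$, I would form the averages $z_j := \sum_i y_i(1 \otimes \lambda(u_{ji}^\pi)^*)$; using the module-multiplicativity identity above, the coproduct formula $\Delta(\lambda(u_{ji}^\pi)^*) = \sum_l \lambda(u_{jl}^\pi)^* \otimes \lambda(u_{li}^\pi)^*$, and the unitarity relation $\sum_i u_{ki}^\pi (u_{li}^\pi)^* = \delta_{k,l}$, a direct computation shows $\beta(z_j) = z_j \otimes 1$, so each $z_j$ lies in $X \ovot \mathscr{L}^\infty(\wG)$ by the fixed-point statement. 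Inverting the averaging via $\sum_j (u_{ji}^\pi)^* u_{jm}^\pi = \delta_{i,m}$ then recovers $y_m = \sum_j z_j(1 \otimes \lambda(u_{jm}^\pi)) \in (X \ovot \mathscr{L}^\infty(\wG))(1 \otimes \lambda(\mathcal{O}(\G)_\pi))$, which settles the inclusion on each spectral component and hence globally.

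The hardest step is the fixed-point converse, which depends on the non-trivial density of the first leg of $V_\G$ in $\mathscr{L}^\infty(\wG)'$. This is standard for the multiplicative unitary of a compact quantum group but is not given explicitly in the preliminaries; alternatively one could derive it from the Peter-Weyl decomposition $L^2(\G) = \bigoplus_\pi \H_\pi \otimes \overline{\H_\pi}$ and the explicit action of $V_\G$ on matrix coefficients. Once this is in hand, the spectral-subspace inversion in the third paragraph is a delicate but transparent manipulation of matrix coefficients and unitarity relations.
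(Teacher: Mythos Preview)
Your argument is correct, but it follows a genuinely different route from the paper's proof for the description of $\mathcal{R}_{\operatorname{alg}}$.

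The paper treats the fixed-point identification as obvious and then attacks the algebraic regular elements directly: it first handles the case $X=B(\H)$ by writing an arbitrary $z$ as a $\sigma$-weak limit of elements in $(B(\H)\odot\mathscr{L}^\infty(\wG))(1\otimes\mathcal{O}(\G))$, applying $(\id\ovot\Delta)$, and then slicing with functionals of the form $\omega_{\xi_\G}(-(u^{\pi'}_{k'j'})^*)$ to exploit the Schur orthogonality relations; a second pass reduces a general operator system $X\subseteq B(\H)$ to this case by more orthogonality slicing. Your approach instead leans on the fixed-point statement (which you actually prove, via the commutant argument and the fact that the first-leg slices of $V_\G$ generate $\mathscr{L}^\infty(\wG)'$) and then runs a clean averaging trick: decompose $\mathcal{R}_{\operatorname{alg}}$ into spectral subspaces via Proposition~\ref{algebraic core}, and for each intertwiner $(y_i)$ form $z_j=\sum_i y_i(1\otimes\lambda(u^\pi_{ji})^*)$, check $\beta(z_j)=z_j\otimes 1$ using only the unitarity relations of $U_\pi$, and invert. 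This avoids the density argument, the Haar orthogonality relations, and the two-step reduction, and works uniformly in $X$. The trade-off is that your chain of implications is reversed: you need the fixed-point description first, and that in turn rests on the generation property of the left leg of $V_\G$, which---as you note---is standard in the locally compact quantum group literature (it follows for instance from $V_\G=(\hat J\otimes\hat J)\Sigma W_{\wG}\Sigma(\hat J\otimes\hat J)$ and the known first-leg density for $W_{\wG}$) but is not recorded in the paper's preliminaries. The paper's argument for $\mathcal{R}_{\operatorname{alg}}$ is logically independent of the fixed-point claim, which is perhaps why the authors felt free to dismiss the latter as obvious.
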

\begin{proof} That $\id \ovot \Delta$ defines a $\G$-$W^*$-action is obvious, as is the assertion about the fixed points. We therefore focus on the description of the algebraic regular elements. For convenience of notation, we identify $\mathcal{O}(\G) \subseteq B(L^2(\G))$.

We first assume $X= B(\H)$. Suppose that $z\in B(\H)\ovot B(L^2(\G))$ satisfies $$(\id \ovot \Delta)(z)= \sum_{\pi\in \Irr(\G)}\sum_{i,j=1}^{n_\pi} z_{ij}^\pi \otimes u_{ij}^\pi$$ where $z_{ij}^\pi\in B(\H)\ovot B(L^2(\G))$. Since $B(\H)\odot \mathscr{L}^\infty(\wG)\mathcal{O}(\G)= (B(\H)\odot \mathscr{L}^\infty(\wG))(1\otimes \mathcal{O}(\G))$ is $\sigma$-weakly dense in $B(\H)\ovot B(L^2(\G))$, we can write
    $$z= \lim_\lambda \sum_{\pi \in \Irr(\G)}\sum_{i,j=1}^{n_\pi} z_{ij}^\pi(\lambda)(1\otimes u_{ij}^\pi)$$
    where $z_{ij}^\pi(\lambda)\in B(\H)\odot \mathscr{L}^\infty(\wG)$ with the limit taken in the $\sigma$-weak topology. Then
    \begin{align*}
       (\id \ovot \Delta)(z)= \lim_\lambda \sum_{\pi \in \Irr(\G)}\sum_{i,j,k=1}^{n_\pi} z_{ij}^\pi(\lambda)(1\otimes u_{ik}^\pi) \otimes u_{kj}^\pi.
    \end{align*}
    Applying the slice map $\id \ovot \id \ovot \omega_{\xi_\G}(-(u_{i'j'}^{\pi'})^*)$ to this expression and using the orthogonality relations, we get
    \begin{align*}
        z_{i'j'}^{\pi'} \frac{q_{\pi', j'}^{-1/2}}{\dim_q(\pi')}= \lim_\lambda \sum_i z_{ij'}^{\pi'}(\lambda)(1\otimes u_{ii'}^{\pi'})\frac{q_{\pi',j'}^{-1/2}}{\dim_q(\pi')}
    \end{align*}
    or equivalently
    $$ z_{i'j'}^{\pi'} = \lim_\lambda \sum_i z_{ij'}^{\pi'}(\lambda)(1\otimes u_{ii'}^{\pi'}).$$
    Multiply on the right with $1\otimes (u_{k'i'}^{\pi'})^*$ to obtain
    $$z_{i'j'}^{\pi'} (1\otimes (u_{k'i'}^{\pi'})^*)= \lim_\lambda \sum_i z_{ij'}^{\pi'}(\lambda)(1 \otimes u_{ii'}^{\pi'}(u_{k'i'}^{\pi'})^*).$$
    Summing over $i'$ gives
    $$\sum_{i'} z_{i'j'}^{\pi'}(1\otimes (u_{k'i'}^{\pi'})^*) = \lim_\lambda z_{k'j'}^{\pi'}(\lambda).$$
    In particular, the $\sigma$-weak limit $\lim_\lambda z_{i'j'}^{\pi'}(\lambda)\in B(\H)\ovot \mathscr{L}^\infty(\wG)$ exists for all $\pi'\in \Irr(\G)$ and $1\le i',j' \le n_{\pi'}$ and  thus
    $$z_{i'j'}^{\pi'}= \sum_i (\lim_\lambda z_{ij'}^{\pi'}(\lambda))(1\otimes u_{ii'}^{\pi'}) \in (B(\H)\ovot \mathscr{L}^\infty(\wG))(1\otimes \mathcal{O}(\G)).$$
    Consequently, 
    $$z= (\id_{B(\H\otimes L^2(\G))} \odot \epsilon_r)(\id \ovot \Delta)(z) = \sum_{\pi\in \Irr(\G)}\sum_{i=1}^{n_\pi} z_{ii}^\pi \in (B(\H)\ovot \mathscr{L}^\infty(\wG))(1\otimes \mathcal{O}(\G))$$
    as desired.
    
We now prove the general case, so let $X\subseteq B(\H)$ be any operator system.
If $z\in \mathcal{R}_{\operatorname{alg}}(X \ovot B(L^2(\G)))$, then also $z\in \mathcal{R}_{\operatorname{alg}}(B(\H) \ovot B(L^2(\G)))$. By the previous step, we can write 
$$z= \sum_{\pi\in \Irr(\G)}\sum_{i,j=1}^{n_\pi} z_{ij}^\pi(1\otimes u_{ij}^\pi)$$
where $z_{ij}^\pi\in B(\H)\ovot \mathscr{L}^\infty(\wG)$.
It follows that
$$(\id \ovot \Delta)(z) = \sum_{\pi \in \Irr(\G)}\sum_{i,j,k=1}^{n_\pi}z_{ij}^\pi(1\otimes u_{ik}^\pi)\otimes u_{kj}^\pi$$
so by another application of the orthogonality relations,
\begin{align*}
    (\id \ovot \id \odot \omega_{\xi_\G}(-(u_{k'j'}^{\pi'})^*))(\id \ovot \Delta)(z)&=\sum_{\pi \in \Irr(\G)} \sum_{i,j,k=1}^{n_\pi} z_{ij}^\pi(1\otimes u_{ik}^\pi)\varphi_\G(u_{kj}^\pi(u_{k'j'}^{\pi'})^*)\\
    &= \sum_{i=1}^{n_{\pi'}} \frac{q_{\pi',j'}^{-1/2}}{\dim_q(\pi')} z_{ij'}^{\pi'}(1\otimes u_{ik'}^{\pi'}).
\end{align*} This expression lives in $X \ovot B(L^2(\G))$, so that $\sum_{i=1}^{n_{\pi'}} z_{ij'}^{\pi'}(1\otimes u_{ik'}^{\pi'})\in X \ovot B(L^2(\G))$. Therefore, 
$$X \ovot B(L^2(\G))\ni \sum_{k'=1}^{n_{\pi'}}\left(\sum_{i=1}^{n_{\pi'}}  z_{ij'}^{\pi'}(1\otimes u_{ik'}^{\pi'})\right)(1\otimes (u_{i'k'}^{\pi'})^*)= z_{i'j'}^{\pi'}$$
 so we conclude that $z\in (X\ovot \mathscr{L}^\infty(\wG))(1\otimes \mathcal{O}(\G))$.
\end{proof}
    
Finally, we say something about injectivity of $\G$-equivariant operator systems.

\begin{defn}\label{Ginjectivity}
    A $\G$-$C^*$-operator system $I$ is called $\G$-$C^*$-injective if for all $\G$-$C^*$-operator systems $X,Y$, every $\G$-$C^*$-ucp map $\varphi: X \to I$ and every $\G$-$C^*$-uci map $\iota: X\to Y$, there exists a $\G$-$C^*$-ucp map $\Phi: Y \to I$ such that $\Phi \iota = \phi$. Similarly, $\G$-$W^*$-injectivity is defined.\end{defn}

    The following lemma was already established in \cite[Lemma 2.10]{HHN} in a slightly different context. The proof is unchanged, but we spell out the details for the convenience of the reader.

\begin{lma}\label{G-injective systems}
    Let $X$ be an injective operator system. Then
   $(X \ovot B(L^2(\G)), \id \ovot \Delta)$ is $\G$-$W^*$-injective.
\end{lma}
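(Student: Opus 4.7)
My plan follows the classical Hamana-style strategy: first extend non-equivariantly, then average with the Haar state to force equivariance. Let $\iota : Y \to Z$ be a $\G$-$W^*$-uci morphism of $\G$-$W^*$-operator systems with actions $\beta_Y,\beta_Z$, and let $\varphi : Y \to X \ovot B(L^2(\G))$ be $\G$-$W^*$-ucp. Since $X$ is injective by hypothesis and $B(L^2(\G))$ is injective by Arveson's theorem, the cited proposition of Hamana yields that $X \ovot B(L^2(\G))$ is injective as an operator system, so there is a (not necessarily equivariant) ucp extension $\widetilde{\Phi} : Z \to X \ovot B(L^2(\G))$ with $\widetilde{\Phi}\circ\iota = \varphi$.

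The central ingredient is the $\sigma$-weakly continuous ucp splitting
\[\mu : B(L^2(\G)) \ovot \mathscr{L}^\infty(\G) \to B(L^2(\G)), \qquad \mu(y) = (\id \ovot \omega_{\xi_\G})(V_\G^*\, y\, V_\G).\]
Since $\Delta(x) = V_\G(x\otimes 1)V_\G^*$, one has $V_\G^*\Delta(x)V_\G = x\otimes 1$ and hence $\mu\circ\Delta = \id$. I then define
\[\Phi := (\id_X \ovot \mu) \circ (\widetilde{\Phi} \ovot \id_{\mathscr{L}^\infty(\G)}) \circ \beta_Z,\]
which is ucp as a composition of ucp maps. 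The equality $\Phi\circ\iota = \varphi$ is a short diagram chase: equivariance of $\iota$ and $\varphi$ gives $(\widetilde{\Phi}\ovot\id)\beta_Z\circ\iota = (\varphi\ovot\id)\beta_Y = (\id\ovot\Delta)\varphi$, and then $\mu\circ\Delta = \id$ finishes the computation.

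The main technical point is the intertwining identity
\[\Delta \circ \mu = (\mu \ovot \id) \circ (\id \ovot \Delta_0) \quad\text{on } B(L^2(\G)) \ovot \mathscr{L}^\infty(\G),\]
where $\Delta_0$ is the coproduct on $\mathscr{L}^\infty(\G)$. Granted this, equivariance $(\Phi\ovot\id)\beta_Z = (\id\ovot\Delta)\Phi$ follows formally from coassociativity of $\beta_Z$. To prove the identity, I will use the pentagon equation $V_{\G,12}V_{\G,13}V_{\G,23} = V_{\G,23}V_{\G,12}$ in the rewritten form $V_{\G,12}^*V_{\G,23} = V_{\G,13}V_{\G,23}V_{\G,12}^*$. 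Because $V_\G \in \mathscr{L}^\infty(\wG)'\ovot\mathscr{L}^\infty(\G) \subseteq B(L^2(\G))\ovot\mathscr{L}^\infty(\G)$, the element $w' := V_\G^* w V_\G$ remains in $B(L^2(\G))\ovot\mathscr{L}^\infty(\G)$ for any $w$ there. Expanding $(\mu\ovot\id)(\id\ovot\Delta_0)(w) = (\id\ovot\omega_{\xi_\G}\ovot\id)(V_{\G,12}^*V_{\G,23}(w\otimes 1)V_{\G,23}^*V_{\G,12})$ and applying the pentagon rewrite, one sees that the inner factor $V_{\G,23}(w'\otimes 1)V_{\G,23}^*$ equals $(\id\ovot\Delta_0)(w')$, where passing from $\Delta$ to $\Delta_0$ is legitimate precisely because the second leg of $w'$ lies in $\mathscr{L}^\infty(\G)$. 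The remaining outer $V_{\G,13}$ commutes with the middle slice (disjoint legs), and the resulting inner slice $(\omega_{\xi_\G}\ovot\id)\Delta_0$ collapses to $\omega_{\xi_\G}(\cdot)1$ by Haar invariance; what is left folds back into $V_\G(\mu(w)\otimes 1)V_\G^* = \Delta(\mu(w))$. This pentagon-plus-invariance computation is the main obstacle; once it is in hand, the remaining arguments are formal.
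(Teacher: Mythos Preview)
Your proof is correct and follows essentially the same strategy as the paper's: extend non-equivariantly using injectivity of $X\ovot B(L^2(\G))$, then average with the Haar state via conjugation by $V_\G$. Your map $\Phi=(\id_X\ovot\mu)\circ(\widetilde{\Phi}\ovot\id)\circ\beta_Z$ is literally the paper's $\Phi(z)=(\id\ovot\id\ovot\varphi_\G)(V_{\G,23}^*(\Theta\ovot\id)(\alpha_Z(z))V_{\G,23})$; the only difference is that you isolate the intertwining identity $\Delta\circ\mu=(\mu\ovot\id)\circ(\id\ovot\Delta_0)$ as a separate lemma, whereas the paper unfolds the same pentagon-plus-Haar-invariance computation inline (using $V_\G\otimes 1=(\id\ovot\Delta)(V_\G)V_{\G,13}^*$ and $(\id\ovot\varphi_\G\ovot\id)(\id\ovot\Delta)=\id\otimes\varphi_\G(-)1$).
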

\begin{proof} Let $Y,Z$ be $\G$-$W^*$-operator systems $i: Y \to Z$ be a $\G$-$W^*$-uci map and $\phi: Y \to X \ovot B(L^2(\G))$ a $\G$-$W^*$-ucp map. Since $X \ovot B(L^2(\G))$ is an injective operator system, there exists a ucp map $\Theta: Z \to X\ovot B(L^2(\G))$ such that $\Theta i = \phi$. 
    
    Consider the coaction $\alpha_Z: Z \to Z \ovot \mathscr{L}^\infty(\G)$ and the Haar state $\varphi_\G = \omega_{\xi_\G}: \mathscr{L}^\infty(\G)\to \mathbb{C}$ and define 
    $$\Phi: Z \to X \ovot B(L^2(\G)): z \mapsto (\id \ovot \id \ovot \varphi_\G)(V_{\G, 23}^*(\Theta \ovot \id)(\alpha_Z(z))V_{\G, 23}).$$
    Then for $y\in Y$,
    \begin{align*}
        \Phi i(y) &= (\id \ovot \id \ovot \varphi_\G)(V_{\G, 23}^*(\Theta \ovot \id)(\alpha_Z(i(y))V_{\G, 23}))\\
        &= (\id \ovot \id \ovot \varphi_\G)(V_{\G, 23}^*(\Theta \ovot \id)(i \ovot \id)(\alpha_Y(y))V_{\G, 23})\\
        &=(\id \ovot \id \ovot \varphi_\G)(V_{\G, 23}^*(\phi \ovot \id)(\alpha_Y(y))V_{\G, 23})\\
        &= (\id \ovot \id \ovot \varphi_\G)(V_{\G, 23}^* (\id \ovot \Delta)(\phi(y))V_{\G, 23})\\
        &= (\id \ovot \id \ovot \varphi_\G)(\phi(y)\otimes 1)= \phi(y)
    \end{align*} so $\Phi \circ i = \phi$. Moreover, $\Phi$ is $\G$-$W^*$-equivariant. To see this, recall that $(\id \ovot \Delta)(V_\G)= V_{\G, 12}V_{\G, 13}$, so that $V_\G\otimes 1= (\id \ovot \Delta)(V_\G)V_{\G, 13}^*$. Therefore, if $z\in Z$,
\begin{align*} &(\Phi \ovot \id)(\alpha_Z(z))\\
&= (\id \ovot \id \ovot \varphi_\G\ovot \id)((1\otimes V_{\G}^*\otimes 1)(\Theta \ovot \id \ovot \id)((\alpha_Z \ovot \id)\alpha_Z(z))(1\otimes V_{\G}\otimes 1))\\
&= (\id \ovot \id \ovot \varphi_\G\ovot \id)((1\otimes V_{\G}^*\otimes 1)(\Theta \ovot \id \ovot \id)((\id \ovot \Delta)\alpha_Z(z))(1\otimes V_{\G}\otimes 1))\\
&= (1\otimes V_{\G})(\id \ovot \id \ovot \varphi_\G\ovot \id)(\id \ovot \id \ovot \Delta)((1\otimes V_\G^*)(\Theta \ovot \id)(\alpha_Z(z))(1\otimes V_\G)))(1\otimes V_{\G}^*)\\
&= (1\otimes V_\G)(\id \ovot \id \ovot \varphi_\G(-)1)((1\otimes V_\G^*)(\Theta \ovot \id)(\alpha_Z(z))(1\otimes V_\G))(1\otimes V_\G^*)\\
&= (1 \otimes V_\G)(\Phi(z)\otimes 1)(1\otimes V_\G^*)\\
&= (\id \ovot \Delta)(\Phi(z)).
\end{align*}
and the proof is finished.
\end{proof}

\begin{rem} \normalfont{If $X$ is an injective operator system, it is in general not true that the $\G$-$W^*$-operator system $(X\ovot \mathscr{L}^\infty(\G), \id\ovot \Delta)$ is $\G$-$W^*$-injective. Indeed, this already fails for $X=\mathbb{C}$: we will later see that $\mathscr{L}^\infty(\G)$ is $\G$-$W^*$-injective if and only if $\G$ is coamenable (see Corollary \ref{amenability}). However, on the dual side, this result is true (see Lemma \ref{wG-injectivity}). $\Di$}
\end{rem}
\begin{prop}\label{G-injectivity characterisation}
    Let $(X, \alpha)$ be a $\G$-$W^*$-operator system. The following statements are equivalent: 
\begin{enumerate}
    \item $(X, \alpha)$ is $\G$-$W^*$-injective.
    \item $X$ is injective and there exists a $\G$-$W^*$ ucp conditional expectation $\varphi: (X \ovot B(L^2(\G)), \id \ovot \Delta)\to (\alpha(X), \id \ovot \Delta)$.
\end{enumerate}
\end{prop}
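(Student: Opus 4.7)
My plan is to prove both directions by exploiting the observation that $\alpha$ yields a $\G$-$W^*$-equivariant complete order isomorphism $(X, \alpha) \cong (\alpha(X), \id \ovot \Delta|_{\alpha(X)})$, which is immediate from the coaction identity $(\alpha \ovot \id)\alpha = (\id \ovot \Delta)\alpha$, together with Lemma \ref{G-injective systems} and the fact that the Fubini tensor product of two injective operator systems (with $\sigma$-weakly closed second factor) is injective.

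For $(1) \Rightarrow (2)$, I would fix an embedding $X \subseteq B(\mathcal{K})$ and view $\alpha$ as a $\G$-$W^*$-uci map from $(X, \alpha)$ into $(B(\mathcal{K}) \ovot B(L^2(\G)), \id \ovot \Delta)$. The equivariance is the coaction property, and $\alpha$ is uci into $X \ovot \mathscr{L}^\infty(\G) \subseteq B(\mathcal{K}) \ovot B(L^2(\G))$. Applying $\G$-$W^*$-injectivity of $X$ to extend $\id_X$ along $\alpha$ produces a $\G$-$W^*$-ucp map $\tilde\psi: B(\mathcal{K}) \ovot B(L^2(\G)) \to X$ with $\tilde\psi \circ \alpha = \id_X$. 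Since $B(\mathcal{K}) \ovot B(L^2(\G))$ is injective as an operator system, $X$ is a ucp retract of an injective operator system, hence injective. The map $\varphi := \alpha \circ \tilde\psi|_{X \ovot B(L^2(\G))}: X \ovot B(L^2(\G)) \to \alpha(X)$ is then $\G$-$W^*$-equivariant and ucp by composition, and a conditional expectation onto $\alpha(X)$ since $\varphi \circ \alpha = \alpha \circ \tilde\psi \circ \alpha = \alpha$.

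For $(2) \Rightarrow (1)$, injectivity of $X$ combined with Lemma \ref{G-injective systems} makes $(X \ovot B(L^2(\G)), \id \ovot \Delta)$ a $\G$-$W^*$-injective operator system. Given a $\G$-$W^*$-extension problem consisting of a $\G$-$W^*$-ucp $\phi: Y \to X$ and a $\G$-$W^*$-uci $\iota: Y \to Z$, my plan is to pass through $X \ovot B(L^2(\G))$: the composition $\alpha \circ \phi: Y \to X \ovot B(L^2(\G))$ is $\G$-$W^*$-ucp (equivariance reduces once more to the coaction identity). Extending along $\iota$ via $\G$-$W^*$-injectivity of $X \ovot B(L^2(\G))$ yields a $\G$-$W^*$-ucp map $\Psi: Z \to X \ovot B(L^2(\G))$ with $\Psi \iota = \alpha \phi$. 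Setting $\Phi := \alpha^{-1} \circ \varphi \circ \Psi$ gives the desired $\G$-$W^*$-ucp extension, since $\Phi \iota = \alpha^{-1} \varphi \alpha \phi = \phi$ by the conditional expectation property of $\varphi$.

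I do not anticipate any serious obstacle; the argument is essentially a diagram chase. The only delicate point is bookkeeping: one must consistently verify that the inclusion $\alpha(X) \hookrightarrow X \ovot \mathscr{L}^\infty(\G) \hookrightarrow X \ovot B(L^2(\G))$ intertwines the relevant actions, which hinges on the fact that $\Delta$ on $B(L^2(\G))$ restricts to $\Delta$ on $\mathscr{L}^\infty(\G)$, so that $(\alpha(X), \id \ovot \Delta|_{\alpha(X)})$ is genuinely a $\G$-$W^*$-operator subsystem of $(X \ovot B(L^2(\G)), \id \ovot \Delta)$.
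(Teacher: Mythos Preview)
Your proposal is correct and follows essentially the same approach as the paper's proof: both directions hinge on Lemma \ref{G-injective systems} and the fact that $\alpha$ realises $(X,\alpha)$ as a $\G$-$W^*$-operator subsystem of $(X\ovot B(L^2(\G)),\id\ovot\Delta)$, with $\G$-$W^*$-injectivity used to produce a retraction $\tilde\psi$ (the paper's $\theta$) so that $X$ is a retract of an injective operator system and $\alpha\circ\tilde\psi$ gives the conditional expectation. The paper simply records this more tersely, leaving the explicit construction of $\varphi$ and the diagram chase in $(2)\Rightarrow(1)$ implicit; your write-up spells these out, and the mention of Fubini-tensor injectivity in your preamble is not actually needed for the argument.
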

\begin{proof}
    $(2)\implies (1)$ By Lemma \ref{G-injective systems}, $(X\ovot B(L^2(\G)), \id \ovot \Delta)$ is $\G$-$W^*$-injective. Thus $(1)$ immediately follows.

    $(1)\implies (2)$ Suppose $X \subseteq B(\H)$. Since $X$ is $\G$-$W^*$-injective, there is an equivariant ucp map $\theta: (B(\H\otimes L^2(\G)), \id_{B(\H)} \ovot \Delta)\to (X, \alpha)$ such that the diagram
    $$
\begin{tikzcd}
X \arrow[rr, "\alpha"] \arrow[d, "\id_X"'] &  & X\ovot \mathscr{L}^\infty(\G) \arrow[d, "\subseteq"] \\
X                                          &  & B(\H\otimes L^2(\G)) \arrow[ll, "\theta"]
\end{tikzcd}$$
commutes. Since $B(\H\otimes L^2(\G))$ is injective and $\theta \alpha = \id_X$, it follows that $X$ is injective.
\end{proof}

\begin{rem}
    \normalfont{In the previous proposition, we have seen that $\G$-$W^*$-injectivity implies injectivity of the operator system. For $\G$-$C^*$-injectivity, this is no longer true. See Remark \ref{bad}.$\Di$}
\end{rem}

We end this subsection with some trivial, yet useful observations.
\begin{lma}\label{regular injectivity}
    If $(X, \alpha)$ is a $\G$-$W^*$-injective operator system, then $(\mathcal{R}(X), \alpha)$ is a $\G$-$C^*$-injective operator system.
\end{lma}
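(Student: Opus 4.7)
The plan is to reduce $\G$-$C^*$-injectivity of $(\mathcal{R}(X), \alpha)$ to the assumed $\G$-$W^*$-injectivity of $(X, \alpha)$ by passing through the canonical embedding $Y \otimes C_r(\G) \subseteq Y \ovot \mathscr{L}^\infty(\G)$, which lets us view every $\G$-$C^*$-operator system as a $\G$-$W^*$-operator system. So let $(Y, \beta_Y)$ and $(Z, \beta_Z)$ be $\G$-$C^*$-operator systems, $\iota : Y \to Z$ a $\G$-$C^*$-uci map, and $\varphi : Y \to \mathcal{R}(X)$ a $\G$-$C^*$-ucp map; the goal is to build a $\G$-$C^*$-ucp extension $\Phi : Z \to \mathcal{R}(X)$ with $\Phi \circ \iota = \varphi$.

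First I would view $Y, Z$ as $\G$-$W^*$-operator systems in the manner recalled between Definitions \ref{GWstar} and \ref{Ginjectivity}, so that $\iota$ becomes $\G$-$W^*$-uci. Composing $\varphi$ with the inclusion $\mathcal{R}(X) \hookrightarrow X$ and noting that the $\G$-$C^*$-action on $\mathcal{R}(X)$ is by definition the restriction of $\alpha$, the resulting map $\varphi : Y \to X$ is $\G$-$W^*$-ucp. Then $\G$-$W^*$-injectivity of $(X, \alpha)$ produces a $\G$-$W^*$-ucp extension $\Phi : Z \to X$ with $\Phi \circ \iota = \varphi$.

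The key step is to show $\Phi(Z) \subseteq \mathcal{R}(X)$. By Proposition \ref{algebraic core}, the algebraic core $\mathcal{Z} \subseteq Z$ is norm-dense, and $\beta_Z$ restricts to a Hopf $^*$-algebra coaction $\beta_Z : \mathcal{Z} \to \mathcal{Z} \odot \mathcal{O}(\G_r)$. For $z \in \mathcal{Z}$, equivariance of $\Phi$ gives
\[
\alpha(\Phi(z)) = (\Phi \ovot \id)(\beta_Z(z)) \in X \odot \mathcal{O}(\G_r),
\]
so $\Phi(z) \in \mathcal{R}_{\operatorname{alg}}(X, \alpha)$. Since $\Phi$ is norm-contractive and $\mathcal{Z}$ is dense in $Z$, we conclude $\Phi(Z) \subseteq \overline{\mathcal{R}_{\operatorname{alg}}(X, \alpha)}^{\|\cdot\|} = \mathcal{R}(X)$.

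Finally, since $\alpha$ restricted to $\mathcal{R}(X)$ is the $\G$-$C^*$-action, the $\G$-$W^*$-equivariance of $\Phi : Z \to X$ immediately specializes to $\G$-$C^*$-equivariance of the corestriction $\Phi : Z \to \mathcal{R}(X)$, completing the extension problem. I do not anticipate any serious obstacle; the only subtlety is confirming that the algebraic core of the $\G$-$C^*$-operator system $Z$ indeed enjoys the $\mathcal{O}(\G_r)$-valued coaction property from Proposition \ref{algebraic core}, which is precisely what reduces the closure $\mathcal{R}(X)$ problem to a density-and-continuity argument.
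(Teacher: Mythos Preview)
Your proof is correct and follows essentially the same route as the paper's: view the $\G$-$C^*$-data as $\G$-$W^*$-data, extend via $\G$-$W^*$-injectivity, and then observe that the extension lands in $\mathcal{R}(X)$. The paper compresses your key step into the one-liner ``clearly $\Phi$ maps $Z = \mathcal{R}(Z)$ into $\mathcal{R}(X)$'', whereas you spell it out via the algebraic core and a density argument; this is exactly the justification that lies behind the paper's ``clearly''.
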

\begin{proof}
    Let $Y,Z$ be $\G$-$C^*$-operator systems, $i: Y \to Z$ be a $\G$-$C^*$-uci map and $\phi: Y \to \mathcal{R}(X)$ a $\G$-$C^*$- ucp map. Viewing $Y,Z$ as $\G$-$W^*$-operator systems, we can apply $\G$-$W^*$-injectivity of $(X, \alpha)$ to find a $\G$-$W^*$-ucp map $\Phi: Z \to X$ such that $\Phi\circ i = \phi$. But clearly $\Phi$ maps $Z= \mathcal{R}(Z)$ into $\mathcal{R}(X)$ and viewing $\Phi$ as a map $Z\to \mathcal{R}(X)$, it is $\G$-$C^*$-equivariant.
\end{proof}

\begin{cor} If $X$ is an injective operator system, then
    $\mathcal{R}(X \ovot B(L^2(\G)), \id_X \ovot \Delta)$ is $\G$-$C^*$-injective.
\end{cor}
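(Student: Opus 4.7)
This corollary is an immediate combination of the two preceding lemmas, so there is essentially no new content to prove. The plan is simply to chain them together.

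First, I would apply Lemma \ref{G-injective systems} to the injective operator system $X$: this tells me that the $\G$-$W^*$-operator system $(X \ovot B(L^2(\G)), \id_X \ovot \Delta)$ is $\G$-$W^*$-injective. Then I would invoke Lemma \ref{regular injectivity}, which says that passing to the regular elements $\mathcal{R}(-)$ of any $\G$-$W^*$-injective operator system produces a $\G$-$C^*$-injective operator system. Applying this to $(X \ovot B(L^2(\G)), \id_X \ovot \Delta)$ gives exactly the claimed conclusion.

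There is no real obstacle here; the only thing to verify implicitly is that $\mathcal{R}(X \ovot B(L^2(\G)), \id_X \ovot \Delta)$ is indeed a well-defined $\G$-$C^*$-operator system, but this was observed in the discussion right after the definition of regular elements (the action restricts to the regular part, with the Podleś condition following from the algebraic identity $\mathcal{R}_{\operatorname{alg}}(X, \alpha) \odot \mathcal{O}(\G_r) = \alpha(\mathcal{R}_{\operatorname{alg}}(X, \alpha))(1 \otimes \mathcal{O}(\G_r))$). So the proof is genuinely one line: combine Lemma \ref{G-injective systems} with Lemma \ref{regular injectivity}.
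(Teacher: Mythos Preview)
Your proposal is correct and matches the paper's approach exactly: the corollary is stated without proof precisely because it follows immediately by combining Lemma \ref{G-injective systems} (injective $X$ gives $\G$-$W^*$-injectivity of $X \ovot B(L^2(\G))$) with Lemma \ref{regular injectivity} (passing to $\mathcal{R}(-)$ preserves injectivity in the $C^*$-sense).
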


  In the literature, the notion of equivariant injectivity has been studied in the category with equivariant unital $C^*$-algebras as objects and equivariant ucp maps between such $C^*$-algebras as morphisms (and equivariant uci maps as monomorphisms), see for instance \cite{KKSV}. We make the somewhat trivial observation that this notion of equivariant injectivity is equivalent to the one that we defined.

\begin{prop}
    Let $(A, \alpha)$ be a unital $\G$-$C^*$-algebra. Then $(A, \alpha)$ is injective in the category of unital $\G$-$C^*$-algebras with ucp maps if and only if it is injective in the category of $\G$-$C^*$-operator systems. 
\end{prop}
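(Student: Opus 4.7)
The direction from operator-system-injectivity to $C^*$-algebra-injectivity is immediate: every $\G$-$C^*$-uci embedding between unital $\G$-$C^*$-algebras is in particular a $\G$-$C^*$-uci of the underlying operator systems, and equivariant ucp maps are the morphisms in both categories, so the extension property transports verbatim.

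For the nontrivial converse, suppose $(A,\alpha)$ is injective in the category of unital $\G$-$C^*$-algebras. My plan is a standard ``Arveson-then-average'' argument; the key input is an equivariant retraction of $\alpha$. Fix a faithful unital representation $A\subseteq B(\H)$ and observe that $(B(\H)\otimes C_r(\G),\id\otimes\Delta)$ is itself a unital $\G$-$C^*$-algebra: coassociativity is immediate, and the Podleś density reduces to that of $C_r(\G)$. The coaction $\alpha:A\to B(\H)\otimes C_r(\G)$ is then an equivariant uci unital $*$-monomorphism from $(A,\alpha)$ to this $\G$-$C^*$-algebra. Applying the hypothesis to $\alpha$ together with the equivariant ucp map $\id_A:A\to A$ produces an equivariant ucp map
$$\tilde E:B(\H)\otimes C_r(\G)\to A\qquad\text{with}\qquad \tilde E\circ\alpha=\id_A.$$

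Now let $\iota:(X,\beta)\to(Y,\gamma)$ be a $\G$-$C^*$-uci and $\phi:(X,\beta)\to(A,\alpha)$ a $\G$-$C^*$-equivariant ucp map. By Arveson's extension theorem, injectivity of $B(\H)$ as an operator system gives a ucp map $\psi:Y\to B(\H)$ with $\psi\circ\iota=\phi$, with no equivariance required. Define
$$\Phi:=\tilde E\circ(\psi\otimes\id_{C_r(\G)})\circ\gamma:Y\to A,$$
which is ucp as a composition of ucp maps. The identity $\Phi\circ\iota=\phi$ follows from the chain
$$\tilde E\circ(\psi\otimes\id)\circ\gamma\circ\iota=\tilde E\circ(\psi\iota\otimes\id)\circ\beta=\tilde E\circ(\phi\otimes\id)\circ\beta=\tilde E\circ\alpha\circ\phi=\phi,$$
using equivariance of $\iota$, equivariance of $\phi$, and the defining property of $\tilde E$. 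Equivariance of $\Phi$ is a short diagram chase combining equivariance of $\tilde E$, i.e.\ $\alpha\circ\tilde E=(\tilde E\otimes\id)\circ(\id\otimes\Delta)$, with the coaction identity $(\gamma\otimes\id)\circ\gamma=(\id\otimes\Delta)\circ\gamma$ on $Y$. The only substantive step is the existence of $\tilde E$; once that is in hand, the rest of the argument is routine and I foresee no further obstacle.
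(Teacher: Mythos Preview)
Your proof is correct, but it proceeds differently from the paper's. The paper factors through the $\G$-$C^*$-algebra $\mathcal{R}(B(\H)\ovot B(L^2(\G)),\id\ovot\Delta)$, which has already been shown to be $\G$-$C^*$-injective \emph{as an operator system} (via Lemma~\ref{G-injective systems} and its corollary). One then uses the $C^*$-algebra-level hypothesis to obtain an equivariant retraction of $\alpha:A\to\mathcal{R}(B(\H)\ovot B(L^2(\G)))$, and operator-system injectivity of $A$ follows immediately from that of the larger object. You instead work with the smaller $\G$-$C^*$-algebra $B(\H)\otimes C_r(\G)$, produce the retraction $\tilde E$ there, and then build each required extension by hand via Arveson plus the averaging trick $\Phi=\tilde E\circ(\psi\otimes\id)\circ\gamma$. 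This is precisely the averaging device that underlies Lemma~\ref{G-injective systems}, so in effect you are reproving (the $C^*$-version of) that lemma inline rather than citing it. Your route is more self-contained and avoids the Fubini tensor product and regular-part machinery entirely; the paper's route is shorter on the page because it cashes in results already established.
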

\begin{proof}
    Assume that $(A, \alpha)$ is injective in the category of unital $\G$-$C^*$-algebras. We may assume that $A\subseteq B(\H)$ for some Hilbert space $\H$. There exists a $\G$-$C^*$-ucp map $\varphi: \mathcal{R}(B(\H)\ovot B(L^2(\G)))\to A$ such that the diagram
    $$
\begin{tikzcd}
A \arrow[rr, "\alpha"] \arrow[d, "\id_A"'] &  & {\mathcal{R}(B(\H)\ovot B(L^2(\G)), \id\ovot \Delta)} \arrow[lld, "\varphi"] \\
A                                          &  &                                                                              
\end{tikzcd}$$
commutes. Since $\mathcal{R}(B(\H)\ovot B(L^2(\G)), \id \ovot \Delta)$ is $\G$-$C^*$-injective in the category of $\G$-$C^*$-operator systems, it follows that $A$ is also $\G$-$C^*$-injective as an operator system. The converse is clear.
\end{proof}

A similar result obviously also holds for unital $\wG$-$C^*$-algebras.

\begin{lma}
   $(\mathbb{C}, \tau)$, where $\tau$ is the trivial action, is both $\G$-$C^*$ and $\G$-$W^*$-injective.
\end{lma}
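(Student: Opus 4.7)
The plan is to unify the two assertions as a single extension problem for invariant states, and solve it by the standard averaging trick against the Haar state.

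First I would observe that a ucp map into $\mathbb{C}$ is simply a state, and that $\G$-equivariance of such a state $\varphi: X \to \mathbb{C}$ with respect to the trivial action $\tau(c) = c \otimes 1$ on $\mathbb{C}$ unwinds to the invariance property $(\varphi \otimes \id)\alpha_X = \varphi(-) \cdot 1$ in the $\G$-$C^*$-case (respectively $(\varphi \ovot \id)\alpha_X = \varphi(-) \cdot 1$ in the $\G$-$W^*$-case). Both parts of the lemma therefore reduce to the following: given an equivariant uci map $\iota: X \to Y$ and an invariant state $\varphi$ on $X$, produce an invariant state $\Phi$ on $Y$ with $\Phi \circ \iota = \varphi$.

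The construction proceeds in two steps. Since $\mathbb{C} = B(\mathbb{C})$ is an injective operator system, Arveson's extension theorem (or simply Hahn--Banach) yields some state extension $\tilde{\Phi}: Y \to \mathbb{C}$ of $\varphi$ along $\iota$, which need not be invariant. To force invariance, I would average against the Haar state by setting
$$\Phi(y) := \varphi_{\G}\bigl((\tilde{\Phi} \otimes \id)\alpha_Y(y)\bigr)$$
in the $\G$-$C^*$-case, and analogously $\Phi(y) := \omega_{\xi_\G}\bigl((\tilde{\Phi} \ovot \id)\alpha_Y(y)\bigr)$ in the $\G$-$W^*$-case, where normality of $\omega_{\xi_\G}$ ensures the slice is well-defined.

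Three routine verifications finish the argument. First, $\Phi$ is ucp as a composition of ucp maps. Second, it restricts to $\varphi$ along $\iota$: equivariance of $\iota$ gives $\alpha_Y \circ \iota = (\iota \otimes \id)\alpha_X$, combined with $\tilde{\Phi}\iota = \varphi$ and invariance of $\varphi$, the inner slice collapses to $\varphi(x) \cdot 1$, which the Haar state sends back to $\varphi(x)$. Third, invariance of $\Phi$ follows by rewriting
$$(\Phi \otimes \id)\alpha_Y(y) = (\varphi_\G \otimes \id)\Delta\bigl((\tilde{\Phi} \otimes \id)\alpha_Y(y)\bigr)$$
via the coaction identity $(\alpha_Y \otimes \id)\alpha_Y = (\id \otimes \Delta)\alpha_Y$, and then applying left invariance $(\varphi_\G \otimes \id)\Delta = \varphi_\G(-) \cdot 1$. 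No genuine obstacle arises --- the argument is a clean averaging trick, and the $\G$-$W^*$-case is literally the same computation once the spatial tensor product is replaced by the Fubini tensor product and $\varphi_\G$ by its normal extension $\omega_{\xi_\G}$ on $\mathscr{L}^\infty(\G)$.
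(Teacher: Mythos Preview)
Your proof is correct. The averaging construction $\Phi = (\tilde{\Phi}\otimes\varphi_\G)\circ\alpha_Y$ (respectively $\Phi = (\tilde{\Phi}\ovot\omega_{\xi_\G})\circ\alpha_Y$) does exactly what you claim, and the three verifications are straightforward; the only slightly delicate point in the $\G$-$W^*$-case is that the slice $(\tilde{\Phi}\ovot\id)$ with a non-normal $\tilde{\Phi}$ is still available on the Fubini tensor product $Y\ovot\mathscr{L}^\infty(\G)$, which the paper's preliminaries guarantee.

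The paper argues differently: rather than solving the extension problem directly, it observes that $\omega_{\xi_\G}:(B(L^2(\G)),\Delta)\to(\mathbb{C},\tau)$ is $\G$-$W^*$-equivariant, so $\mathbb{C}$ is an equivariant retract of $(B(L^2(\G)),\Delta)$, which is already known to be $\G$-$W^*$-injective by Lemma \ref{G-injective systems}. The $\G$-$C^*$ statement then follows from Lemma \ref{regular injectivity}. So the paper leverages previously established structural results, while you give a self-contained argument that re-derives the special case of those lemmas needed here. Your route is more elementary and avoids appealing to the injectivity of $B(L^2(\G))$; the paper's route is shorter given the surrounding machinery. Note that if you unfold the proof of Lemma \ref{G-injective systems} for $X=\mathbb{C}$ and then post-compose with $\omega_{\xi_\G}$, you recover essentially your averaging formula, so the two approaches are close cousins.
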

\begin{proof}
    Immediate since the map $\omega_{\xi_\G}: B(L^2(\G)) \to \mathbb{C}$ is $(\Delta, \tau)$-equivariant. \end{proof}
\subsection{Actions of discrete quantum groups}

We now introduce the notion of an operator system on which a discrete quantum group acts.

\begin{defn}\label{action DQG} A (right) $\wG$-operator system is a pair $(X, \alpha)$ where $X$ is an operator system and $\alpha: X \to X \ovot \mathscr{L}^\infty(\wG)$ is a uci map such that the diagram $$
\begin{tikzcd}
X \arrow[rr, "\alpha"] \arrow[d, "\alpha"']                          &  & X \ovot \mathscr{L}^\infty(\wG) \arrow[d, "\alpha \ovot \id"] \\
X \ovot \mathscr{L}^\infty(\wG) \arrow[rr, "\id \ovot \hat{\Delta}"] &  & X \ovot \mathscr{L}^\infty(\wG)\ovot \mathscr{L}^\infty(\wG) 
\end{tikzcd}$$
commutes. We will write $X \stackrel{\alpha}\curvearrowleft\wG$ and say that $\alpha$ defines a right $\wG$-action on the operator system $X$.
\end{defn}
The notion of $\wG$-equivariant maps between $\wG$-operator systems is defined in the same way as for $\G$-$W^*$-operator systems, as is the notion of $\wG$-injective operator system. 

Recall also that $\wG$ is called amenable \cite{Tom}, or equivalently $\G$ is coamenable, if there exists a $\wG$-equivariant state $(\mathscr{L}^\infty(\wG), \hat{\Delta})\to (\mathbb{C}, \tau)$ where $\tau$ is the trivial action $\mathbb{C} \curvearrowleft \wG$. 

\begin{lma} If $X \stackrel{\alpha}\curvearrowleft\wG$ is a $\wG$-operator system, then $(\id \ovot \hat{\epsilon})\circ \alpha = \id$.
\end{lma}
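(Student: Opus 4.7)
Define the map $E := (\id \ovot \hat{\epsilon}) \circ \alpha : X \to X$; the goal is to show $E = \id_X$. The plan is to apply the functional $\hat{\epsilon}$ to the ``outer'' $\mathscr{L}^\infty(\wG)$ factor on both sides of the coaction identity
\[
(\alpha \ovot \id) \circ \alpha \;=\; (\id \ovot \hat{\Delta}) \circ \alpha
\]
and to read off the desired identity $\alpha \circ E = \alpha$, from which $E = \id_X$ follows because $\alpha$ is uci (hence injective).

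More concretely, I would apply the slice map $\id_{X \ovot \mathscr{L}^\infty(\wG)} \ovot \hat{\epsilon}$ (with $\hat{\epsilon}$ slicing the third tensor leg) to both sides. For the right-hand side, using that $\hat{\epsilon}$ is a normal functional and $\hat{\Delta}$ is $\sigma$-weakly continuous, the Fubini slice identities let us move $\hat{\epsilon}$ past $\alpha$ and apply it to $\hat{\Delta}$ directly; the counit identity $(\id \ovot \hat{\epsilon}) \circ \hat{\Delta} = \id_{\mathscr{L}^\infty(\wG)}$ then collapses the right-hand side to $\alpha$. For the left-hand side, the slice formula from Proposition 1.2 of \cite{Ham1}, applied with $\varphi = \alpha$, $\psi = \id_{\mathscr{L}^\infty(\wG)}$ and $g = \hat{\epsilon}$, yields
\[
(\id \ovot \id \ovot \hat{\epsilon}) \circ (\alpha \ovot \id_{\mathscr{L}^\infty(\wG)}) \;=\; \alpha \circ (\id_X \ovot \hat{\epsilon}),
\]
so the left-hand side becomes $\alpha \circ (\id_X \ovot \hat{\epsilon}) \circ \alpha = \alpha \circ E$. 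Equating the two sides gives $\alpha \circ E = \alpha$, and injectivity of $\alpha$ delivers $E = \id_X$.

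The only subtlety worth checking is that all the slice-map manipulations above are legal in the Fubini setting, i.e.\ that the relevant maps are $\sigma$-weakly continuous and the relevant factors $\sigma$-weakly closed. This is automatic here: $\mathscr{L}^\infty(\wG)$ is a von Neumann algebra, $\hat{\Delta}$ and $\id$ are normal, and $\hat{\epsilon}$ is a normal functional, so Lemma \ref{Fubinitensor} and the ensuing propositions apply without trouble. There is no real obstacle; the argument is a direct application of the counit axiom of $\hat{\Delta}$ together with the coaction axiom of $\alpha$, with the only care required being the bookkeeping of which tensor leg is being sliced.
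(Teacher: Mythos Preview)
Your proof is correct and is essentially identical to the paper's own proof: both slice the coaction identity $(\alpha\ovot\id)\alpha=(\id\ovot\hat{\Delta})\alpha$ with $\id\ovot\id\ovot\hat{\epsilon}$, use the counit identity $(\id\ovot\hat{\epsilon})\hat{\Delta}=\id$ on the right and the slice compatibility $(\id\ovot\id\ovot\hat{\epsilon})(\alpha\ovot\id)=\alpha\circ(\id\ovot\hat{\epsilon})$ on the left, and then conclude via injectivity of $\alpha$. The only difference is that you spell out the Fubini bookkeeping explicitly, which the paper leaves implicit.
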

\begin{proof}
    If $x\in X$, then
    \begin{align*}
        \alpha(\id \ovot \hat{\epsilon})\alpha(x) = (\id \ovot \id \ovot \hat{\epsilon})(\alpha \ovot \id)\alpha(x) = (\id \ovot \id \ovot \hat{\epsilon})(\id \ovot \hat{\Delta})\alpha(x) = \alpha(x)
    \end{align*}
    so by injectivity of $\alpha$ we conclude that $(\id \ovot \hat{\epsilon})\alpha(x) =x$. 
\end{proof}
If $A$ is a unital $C^*$-algebra, the Fubini tensor product $A\ovot \mathscr{L}^\infty(\wG) \cong \prod_{\pi \in \Irr(\G)} A \otimes B(\H_\pi)$ carries a natural $C^*$-algebra structure. It then makes sense to ask if an action of a DQG on a unital $C^*$-algebra in the sense of Definition \ref{action DQG} is automatically multiplicative. This turns out to be true, as the following result shows. The proof of this fact was communicated to us by Stefaan Vaes on MathOverflow \cite{MO}.

\begin{prop}\label{multiplicative} Assume that $A$ is a unital $C^*$-algebra and $\alpha: A \to A \ovot \mathscr{L}^\infty(\wG)$ is an injective ucp map satisfying the coaction property $(\alpha \ovot \id)\alpha = (\id \ovot \hat{\Delta})\alpha$. Then $\alpha$ is a $*$-homomorphism.
\end{prop}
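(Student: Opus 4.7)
Let $a \in A$ and set $D := \alpha(a^*a) - \alpha(a)^*\alpha(a)$. Since $\alpha$ is ucp, Kadison--Schwarz yields $D \ge 0$ in $A \ovot \mathscr{L}^\infty(\wG)$, and it suffices to show $D = 0$: the multiplicative domain of $\alpha$ will then contain all of $A$, whence $\alpha$ is a $*$-homomorphism.

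My first observation is that $\hat{\epsilon}$ is a normal character on $\mathscr{L}^\infty(\wG)$, so the slice $\id_A \ovot \hat{\epsilon}$ is a $*$-homomorphism. Combined with $(\id \ovot \hat{\epsilon})\alpha = \id_A$ (the lemma preceding this proposition, which uses injectivity of $\alpha$), this gives $(\id \ovot \hat{\epsilon})(D) = a^*a - a^*a = 0$. Under the decomposition $\mathscr{L}^\infty(\wG) \cong \prod_{\pi \in \Irr(\G)} B(\H_\pi)$ the functional $\hat{\epsilon}$ is merely the projection onto the trivial corner, so this identity only controls one block of $D$; the real work lies in propagating the vanishing to the remaining blocks.

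For the propagation, I would apply Kadison--Schwarz to the ucp map $\alpha \ovot \id$ evaluated at $\alpha(a)$ and combine it with the coaction identity $(\alpha \ovot \id)\alpha = (\id \ovot \hat{\Delta})\alpha$, using that $\id \ovot \hat{\Delta}$ is itself a $*$-homomorphism; this yields
\[
(\alpha \ovot \id)(D) \le (\id \ovot \hat{\Delta})(D)
\]
in $A \ovot \mathscr{L}^\infty(\wG)^{\ovot 2}$. Slicing this inequality by normal functionals on the middle factor, and using the Peter--Weyl intertwiner identity $(p_\pi \ovot p_{\bar\pi})\hat{\Delta}(x)t_\pi = t_\pi \hat{\epsilon}(x)$ for invariant vectors $t_\pi \in \Mor(\tau, \pi \boxtimes \bar\pi)$, allows one to move the counit-vanishing from the trivial block onto the other blocks $B(\H_\pi)$. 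Simultaneously, applying Cauchy--Schwarz to the state $\rho \ovot \hat{\epsilon}$ on $A \ovot \mathscr{L}^\infty(\wG)$ (for $\rho$ any state on $A$) extends $(\id \ovot \hat{\epsilon})(D) = 0$ to $(\id \ovot \hat{\epsilon}(z \cdot))(D) = 0$ for every $z \in \mathscr{L}^\infty(\wG)$, producing a large family of slice-vanishings. Combining these, which I expect to span a sufficiently rich subset of the predual of $\mathscr{L}^\infty(\wG)$ to separate positive elements, yields $D = 0$.

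The main obstacle is exactly this final step: because $\hat{\epsilon}$ is not faithful on $\mathscr{L}^\infty(\wG)$, one cannot deduce $D = 0$ directly from $(\id \ovot \hat{\epsilon})(D) = 0$, and making the propagation rigorous requires careful use of the coaction together with the Peter--Weyl block structure $\mathscr{L}^\infty(\wG) = \prod_\pi B(\H_\pi)$ to bridge the gap between a single counit slice and all of $A \ovot \mathscr{L}^\infty(\wG)$.
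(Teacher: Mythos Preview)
Your overall strategy is close to the paper's, and the inequality $(\alpha\ovot\id)(D)\le(\id\ovot\hat{\Delta})(D)$ is correct and useful: compressing its $(\pi,\bar\pi)$-block by $1\otimes t_\pi$ with $t_\pi\in\Mor(\tau,\pi\boxtimes\bar\pi)$ does give
\[
(1\otimes t_\pi^*)\,(\alpha_\pi\odot\id)\bigl((\id\ovot p_{\bar\pi})D\bigr)\,(1\otimes t_\pi)\;\le\;0,
\]
hence equality, i.e.\ $\theta_{\bar\pi}(D_{\bar\pi})=0$ in the paper's notation. But there is a genuine gap in how you propose to finish. First, your Cauchy--Schwarz step is vacuous: since $\hat{\epsilon}$ is a \emph{character}, $\hat{\epsilon}(z\,\cdot)=\hat{\epsilon}(z)\hat{\epsilon}$ for every $z\in\mathscr{L}^\infty(\wG)$, so the ``large family'' of slice-vanishings you obtain consists only of scalar multiples of $\hat{\epsilon}$ itself and separates nothing beyond the trivial block. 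Second, and more seriously, the single intertwiner identity you quote only lets you compress back to the trivial block; it does not give you faithfulness of the map $\theta_\pi$, and without that you cannot pass from $\theta_\pi(D_\pi)=0$ to $D_\pi=0$.

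The paper closes exactly this gap. For each $\pi$ one chooses a conjugate $\rho$ together with \emph{both} solutions $s\in\Mor(\tau,\pi\boxtimes\rho)$, $t\in\Mor(\tau,\rho\boxtimes\pi)$ to the conjugate equations, normalised so that $t^*t=1$ and, crucially, the zig-zag $(s^*\otimes 1)(1\otimes t)=1_{\H_\pi}$ holds. One then proves that $\theta_\pi(x)=(1\otimes t^*)(\alpha_\rho\odot\id)(x)(1\otimes t)$ is \emph{faithful}: if $(\alpha_\rho\odot\id)(x)(1\otimes t)=0$, apply $\alpha_\pi\ovot\id\ovot\id$, use the coaction identity to turn $\alpha_\pi\circ\alpha_\rho$ into $(p_\pi\ovot p_\rho)\hat\Delta\circ\alpha$, then hit with $1\otimes s^*\otimes 1$ and invoke the zig-zag to recover $x=0$. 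With faithfulness in hand, the paper observes $\theta_\pi\circ\alpha_\pi=\id_A$, so for unitary $u$ the element $\alpha_\pi(u)$ lies in the multiplicative domain of $\theta_\pi$, whence $\theta_\pi(1-\alpha_\pi(u)^*\alpha_\pi(u))=0$ and faithfulness forces $\alpha_\pi(u)^*\alpha_\pi(u)=1$. Thus every unitary lies in the multiplicative domain of each $\alpha_\pi$, and $\alpha$ is multiplicative. The missing idea in your plan is precisely this faithfulness argument, which requires the second intertwiner $s$ and the zig-zag relation, not just the single identity $(p_\pi\ovot p_{\bar\pi})\hat\Delta(x)t_\pi=t_\pi\hat\epsilon(x)$.
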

\begin{proof}
    Consider for $\pi\in \Irr(\G)$ the ucp maps $\alpha_\pi:= (\id_A \ovot p_\pi)\circ \alpha: A \to A \odot B(\H_\pi)$. It suffices to show that these maps are multiplicative.
    
    If $\tau\in \Irr(\G)$ is the trivial representation, then $p_\tau = \hat{\epsilon}$ so that $\alpha_\tau = \id_A$. Choose $\rho \in \Irr(\G)$ and $s \in \Mor(\tau, \pi\boxtimes \rho), t \in \Mor(\tau, \rho \boxtimes \pi)$ such that $t^*t = 1$ and $(s^*\otimes 1)(1\otimes t) = 1$. Define the ucp map
    $$\theta_\pi: A \odot B(\H_\pi)\to A: x \mapsto (1\otimes t^*)(\alpha_\rho \odot \id)(x)(1\otimes t).$$
    We claim that $\theta_\pi$ is faithful. Indeed, let $x\in A \odot B(\H_\pi)$ with $\theta_\pi(x^*x)=0$. Then 
   $(\alpha_\rho \odot \id)(x)(1\otimes t)= 0$ by the Schwarz-inequality. It follows that
   \begin{align*}
       0 &= (1 \otimes s^*\otimes 1)(\alpha_\pi \ovot \id \ovot \id)((\alpha_\rho \ovot \id)(x))(1\otimes 1 \otimes t)\\
       &= (1\otimes s^*\otimes 1)(\id \ovot p_\pi \ovot p_\rho \ovot \id)(\alpha \ovot \id\ovot \id)(\alpha \ovot \id)(x)(1\otimes 1\otimes t)\\
       &= (1\otimes s^*\otimes 1)(\id \ovot p_\pi \ovot p_\rho \ovot \id)(\id \ovot \hat{\Delta} \ovot \id)(\alpha \ovot \id)(x)(1\otimes 1 \otimes t)\\
       &= (\id \ovot p_\tau \ovot \id)(\alpha \ovot \id)(x)(1\otimes s^*\otimes 1)(1\otimes 1 \otimes t)\\
       &= x(1\otimes s^*\otimes1)(1\otimes 1\otimes t) = x
   \end{align*}
   and faithfulness of $\theta_\pi$ is proven. For $a\in A$, we have $\theta_\pi(\alpha_\pi(a)) = a$ so if $u\in A$ is a unitary, it follows that $\alpha_\pi(u)$ is in the multiplicative domain of $\theta_\pi$. By the Schwarz inequality, $\alpha_\pi(u)^*\alpha_\pi(u)\le \alpha_\pi(u^*u) = 1$ and 
   $$\theta_\pi(1-\alpha_{\pi}(u)^*\alpha_\pi(u)) = 0$$
   so that faithfulness of $\theta_\pi$ ensures that $\alpha_\pi(u)^*\alpha_\pi(u)= 1$. It follows that $u$ is in the multiplicative domain of $\alpha_\pi$ for every unitary $u\in A$, and thus $\alpha_\pi$ is multiplicative.
\end{proof}

We now give another description of $\wG$-operator systems in terms of $\mathcal{O}(\G)$-modules:

\begin{defn}\label{module operator system} A left $\mathcal{O}(\G)$-module operator system consists of an operator system $X$ and a linear map $\rhd: \mathcal{O}(\G)\odot X \to X$ such that the following conditions are satisfied:
\begin{enumerate}
    \item $1 \rhd x = x$ for all $x\in X$.
    \item $g \rhd 1= \epsilon(g)1$ for all $g\in \mathcal{O}(\G)$.
    \item $g \rhd (h\rhd x)= (gh)\rhd x$ for all $g,h \in \mathcal{O}(\G)$ and all $x\in X$.
    \item The map $X \to M_{n_\pi}(X): x \mapsto [u_{ij}^\pi \rhd x]_{i,j}$ is ucp for every $\pi \in \Irr(\G).$
\end{enumerate}
\end{defn}

\begin{lma}
    If $(X, \alpha)$ is a $\wG$-operator system, then $g\rhd x:= (\id \ovot \ev_{g})\alpha(x)$ defines a left $\mathcal{O}(\G)$-module operator system.
\end{lma}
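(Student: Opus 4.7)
The plan is to verify axioms (1)--(4) of Definition~\ref{module operator system} directly from the formula $g \rhd x = (\id \ovot \ev_g)\alpha(x)$, using three identities already recorded in the preliminaries: (a) $\ev_1 = \hat{\epsilon}$; (b) $(\ev_g \ovot \ev_h)\hat{\Delta} = \ev_{gh}$; and (c) $\omega_{\xi,\eta} \circ p_\pi = \ev_{U_\pi(\xi,\eta)}$. Bilinearity of $(g,x) \mapsto (\id \ovot \ev_g)\alpha(x)$ ensures that the formula descends to a well-defined linear map $\mathcal{O}(\G) \odot X \to X$.

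Axiom (1) is immediate from (a) and the lemma just established, giving $1 \rhd x = (\id \ovot \hat{\epsilon})\alpha(x) = x$. For axiom (2), unitality of $\alpha$ gives $\alpha(1) = 1 \otimes 1$, so $g \rhd 1 = \ev_g(1_{\mathscr{L}^\infty(\wG)})\cdot 1$, and it remains to verify that $\ev_g(1) = \epsilon(g)$ for all $g \in \mathcal{O}(\G)$. By linearity it suffices to check this on a matrix coefficient $g = U_\pi(\xi,\eta)$, where (c) yields $\ev_{U_\pi(\xi,\eta)}(1) = \omega_{\xi,\eta}(p_\pi(1)) = \langle \xi, \eta\rangle = \epsilon(U_\pi(\xi,\eta))$. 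For axiom (3), I would apply the functional $\id \ovot \ev_g \ovot \ev_h$ to both sides of the coaction identity $(\alpha \ovot \id)\alpha(x) = (\id \ovot \hat{\Delta})\alpha(x)$. By (b), the right-hand side reduces to $(\id \ovot \ev_{gh})\alpha(x) = (gh)\rhd x$. The left-hand side, using compatibility of slice maps with tensor product maps from the preliminaries, becomes $(\id \ovot \ev_g)\alpha\bigl((\id \ovot \ev_h)\alpha(x)\bigr) = g\rhd(h \rhd x)$, as desired.

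For axiom (4), I would factor the map as the composition
$$X \xrightarrow{\alpha} X \ovot \mathscr{L}^\infty(\wG) \xrightarrow{\id \ovot p_\pi} X \ovot B(\H_\pi) = X \odot B(\H_\pi) \xrightarrow{\sim} M_{n_\pi}(X),$$
where the last arrow is the canonical complete order isomorphism $z \mapsto [(\id \ovot \omega_{e_i^\pi,e_j^\pi})(z)]_{i,j}$. Agreement with the formula follows from (c), since $(\id \ovot \omega_{e_i^\pi,e_j^\pi})(\id \ovot p_\pi)\alpha(x) = (\id \ovot \ev_{u_{ij}^\pi})\alpha(x) = u_{ij}^\pi \rhd x$. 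Each stage is ucp ($\alpha$ is uci, the projection $p_\pi$ is a $*$-homomorphism, and the final arrow is a complete order isomorphism), so the composition is ucp. The only mildly nontrivial step is the identification $\ev_g(1) = \epsilon(g)$ inside axiom (2); everything else reduces transparently to the coaction property of $\alpha$ together with the counit/multiplicativity identities for the slice functionals $\ev_g$.
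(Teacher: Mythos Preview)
Your proof is correct and follows essentially the same approach as the paper. The paper dismisses axioms (1)--(3) as ``obvious'' and justifies axiom (4) via exactly the factorization $X \xrightarrow{\alpha} X \ovot \mathscr{L}^\infty(\wG) \xrightarrow{\id \ovot p_\pi} X \ovot B(\H_\pi) \cong M_{n_\pi}(X)$ that you wrote down; your version simply unpacks the details behind the first three axioms.
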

\begin{proof}
    The first three conditions in Definition \ref{module operator system} are obvious. The fourth condition is clear since the map $x\mapsto [u_{ij}^\pi \rhd x]_{i,j}$ is exactly the composition
    $$
\begin{tikzcd}
X  \arrow[rr, "\alpha"] &  & X \ovot \mathscr{L}^\infty(\wG) \arrow[rr, "\id \ovot p_\pi"] &  & X \ovot B(\H_\pi) \arrow[rr, "\cong"] &  & M_{n_\pi}(X) 
\end{tikzcd}$$
of ucp maps.
\end{proof}

The following result does not come unexpected:
\begin{prop}
    If $(X, \rhd)$ is a left $\mathcal{O}(\G)$-module operator system, there is a unique right action $X\stackrel{\alpha}\curvearrowleft \wG$ such that $g \rhd x = (\id \ovot \ev_g)\alpha(x)$ for all $g\in \mathcal{O}(\G)$ and all $x\in X$.
\end{prop}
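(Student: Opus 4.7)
The plan is to build $\alpha$ one spectral component at a time, using the decomposition $\mathscr{L}^\infty(\wG)\cong\prod_{\pi\in\Irr(\G)}^{\ell^\infty}B(\H_\pi)$, and then verify everything by slicing with the family $\{\ev_g : g\in\mathcal{O}(\G)\}$. Since $\ev_{U_\pi(\xi,\eta)}=\omega_{\xi,\eta}\circ p_\pi$ and the vector functionals are norm-total in each $B(\H_\pi)_*$, the family $\{\ev_g\}$ is norm-total in $\mathscr{L}^\infty(\wG)_*$. Combining this with the distributivity of the Fubini tensor product over $\ell^\infty$-direct sums (Section~2) yields a separation principle on $X\ovot\mathscr{L}^\infty(\wG)$: an element is zero as soon as all its $(\id\ovot\ev_g)$-slices vanish. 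This takes care of uniqueness immediately.

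For each $\pi\in\Irr(\G)$, set $\alpha_\pi:X\to X\ovot B(\H_\pi)\cong M_{n_\pi}(X)$, $\alpha_\pi(x)=[u_{ij}^\pi\rhd x]_{i,j}$; this is ucp by condition~(4) of Definition~\ref{module operator system} (unitality is consistent with $\epsilon(u_{ij}^\pi)=\delta_{ij}$ via condition~(2)). Under the identification $X\ovot\mathscr{L}^\infty(\wG)\cong\prod_{\pi}^{\ell^\infty}X\ovot B(\H_\pi)$, the uniformly bounded family $(\alpha_\pi(x))_{\pi}$ defines a ucp map $\alpha:X\to X\ovot\mathscr{L}^\infty(\wG)$. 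Reading off the $(i,j)$-entry of $\alpha_\pi(x)$ via $\ev_{u_{ij}^\pi}=\omega_{e_i^\pi,e_j^\pi}\circ p_\pi$ and extending by linearity yields the defining property $(\id\ovot\ev_g)\alpha(x)=g\rhd x$ for every $g\in\mathcal{O}(\G)$.

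The uci property follows from $\hat{\epsilon}=p_\tau$ and $u_{11}^\tau=1$ (with $\tau$ the trivial representation), which give $(\id\ovot\hat{\epsilon})\alpha=\alpha_\tau=\id_X$ by condition~(1); thus $\alpha$ is ucp with a ucp left inverse, and is therefore completely isometric. For the coaction law, slice both $(\alpha\ovot\id)\alpha(x)$ and $(\id\ovot\hat{\Delta})\alpha(x)$ by $\id\ovot\ev_g\ovot\ev_h$, $g,h\in\mathcal{O}(\G)$. The right-hand side collapses to $(\id\ovot\ev_{gh})\alpha(x)=(gh)\rhd x$ using the relation $(\ev_g\ovot\ev_h)\hat{\Delta}=\ev_{gh}$ recalled in Section~2. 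For the left-hand side, the Fubini-type identity $(\id\ovot\id\ovot\ev_h)\circ(\alpha\ovot\id)=\alpha\circ(\id\ovot\ev_h)$, obtained from the slice-commutation proposition of Section~2 (with $\ev_h$ a normal functional on $\mathscr{L}^\infty(\wG)$), reduces the slice to $g\rhd(h\rhd x)$, which equals $(gh)\rhd x$ by condition~(3). The three-factor version of the separation principle then forces $(\alpha\ovot\id)\alpha=(\id\ovot\hat{\Delta})\alpha$.

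The main technical hurdle is keeping track of all the slicings inside the three-fold Fubini tensor product; once the Fubini-type commutation between $(\alpha\ovot\id)$ and $(\id\ovot\id\ovot\ev_h)$ is in place, everything else reduces to unwinding the module axioms, and no new analytic input is required beyond the general Fubini machinery assembled in Section~2.
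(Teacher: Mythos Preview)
Your proof is correct and follows essentially the same route as the paper: define $\alpha_\pi(x)=[u_{ij}^\pi\rhd x]_{i,j}$ using condition~(4), assemble $\alpha=\prod_\pi\alpha_\pi$ via the identification $X\ovot\mathscr{L}^\infty(\wG)\cong\prod_\pi^{\ell^\infty}X\odot B(\H_\pi)$, and deduce uci from $(\id\ovot\hat{\epsilon})\alpha=\id_X$. You supply more detail than the paper does---in particular the explicit uniqueness argument via norm-totality of $\{\ev_g\}$ and the verification of the coaction identity by slicing with $\ev_g\ovot\ev_h$---whereas the paper simply declares at the end that ``it is now clear that $\alpha$ defines a right action.''
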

\begin{proof} Let $\{E_{ij}^\pi\}_{i,j=1}^{n_\pi}$ be the matrix units w.r.t. the orthonormal basis $\{e_i^\pi\}_{i=1}^{n_\pi}$. Given $\pi \in \Irr(\G)$, define 
    $$\alpha_\pi: X \to X \odot B(\H_\pi): x \mapsto \sum_{i,j=1}^{n_\pi}(u_{ij}^\pi \rhd x) \otimes E_{ij}^\pi.$$
    Then $\alpha_\pi$ is a completely positive map, so that the induced map
    $$\alpha = \prod_{\pi \in \Irr(\G)} \alpha_\pi: X \to \prod_{\pi \in \Irr(\G)}(X \odot B(\H_\pi)) \cong X \ovot \mathscr{L}^\infty(\wG)$$ is ucp In particular, $\alpha$ is completely contractive. Clearly $g\rhd x = (\id \otimes \ev_g)\alpha(x)$ for all $g\in \mathcal{O}(\G)$ and all $x \in X$. In particular, note that $(\id \ovot \hat{\epsilon})\alpha = (\id \ovot \ev_1)\alpha = \id$ so that all the matrix amplifications of $\alpha$ are norm-increasing. Hence, $\alpha$ is a complete isometry. It is now clear that $\alpha$ defines a right action $X\curvearrowleft \wG$.
\end{proof}

Also the following is well-known in other contexts, see e.g. \cite[Proposition 2.5]{HHN}. We include the easy proof for convenience.

\begin{lma}[Poisson transformation]\label{lemmaPoissontransform}
    Given a $\wG$-operator system $(X,\alpha)$ and an operator system $Y$, the assignments 
    $$\phi \mapsto \mathcal{P}_\phi:= (\phi \ovot \id)\circ \alpha, \quad \quad\mathcal{P}\mapsto \phi_\mathcal{P}:=(\id \ovot \hat{\epsilon})\circ \mathcal{P}$$
    define a bijective correspondence between ucp maps $X\to Y$ and $\wG$-equivariant ucp maps $(X, \alpha)\to (Y\ovot \mathscr{L}^\infty(\wG), \id \ovot \hat{\Delta})$.
\end{lma}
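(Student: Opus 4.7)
The plan is to verify both prescriptions are well-defined maps of the correct type and then check that they are mutually inverse, using three ingredients already at hand: the coaction property of $\alpha$, the counit identities $(\id \ovot \hat{\epsilon})\circ \hat{\Delta} = \id = (\hat{\epsilon}\ovot\id)\circ\hat{\Delta}$, and the lemma just established that $(\id \ovot \hat{\epsilon})\circ \alpha = \id$. The whole argument is essentially a categorical computation in the style of the classical Poisson transform for group actions.

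First I would check that $\mathcal{P}_\phi = (\phi \ovot \id)\circ \alpha$ is a $\wG$-equivariant ucp map from $X$ to $Y\ovot \mathscr{L}^\infty(\wG)$. It is ucp as a composition of ucp maps. For equivariance, both $(\mathcal{P}_\phi \ovot \id)\circ \alpha$ and $(\id \ovot \hat{\Delta})\circ \mathcal{P}_\phi$ should reduce to $(\phi \ovot \id \ovot \id)\circ (\id \ovot \hat{\Delta})\circ \alpha$: the first by associativity and the coaction property $(\alpha\ovot \id)\circ \alpha = (\id \ovot \hat{\Delta})\circ \alpha$, and the second by commuting $(\phi \ovot \id)$ past $(\id \ovot \hat{\Delta})$, which is the standard Fubini slice-exchange justified by $\sigma$-weak continuity of $\hat{\Delta}$ on the von Neumann algebra factor. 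In the other direction, $\phi_\mathcal{P} = (\id \ovot \hat{\epsilon}) \circ \mathcal{P}$ is ucp because $\hat{\epsilon}$ is a state on $\mathscr{L}^\infty(\wG)$ (in particular a ucp scalar functional), so it takes values in $Y$.

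Then I would check the two composites equal the identity. The composite $\phi \mapsto \mathcal{P}_\phi \mapsto \phi_{\mathcal{P}_\phi}$ equals $(\id\ovot\hat{\epsilon})\circ(\phi\ovot\id)\circ \alpha = \phi \circ (\id \ovot \hat{\epsilon})\circ \alpha$, which by the cited lemma is just $\phi$. For the other composite, starting from a $\wG$-equivariant $\mathcal{P}$, I would compute $\mathcal{P}_{\phi_\mathcal{P}} = ((\id \ovot \hat{\epsilon})\circ \mathcal{P}\ovot \id) \circ \alpha$, then use equivariance of $\mathcal{P}$ to rewrite $(\mathcal{P}\ovot \id)\circ \alpha = (\id \ovot \hat{\Delta})\circ \mathcal{P}$, and finally apply $(\id \ovot \hat{\epsilon}\ovot \id)\circ(\id \ovot \hat{\Delta})= \id$ to the middle leg, yielding $\mathcal{P}$.

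There is no substantive obstacle; the statement is a formal consequence of the axioms. The only item that needs a moment's care is justifying the commutation of Fubini slice maps in the equivariance check and in the final slice-on-the-middle-leg step, but this is covered by the $\sigma$-weak continuity of $\hat{\Delta}$ and $\hat{\epsilon}$ combined with the von Neumann algebra nature of the relevant tensor factors, as stipulated in the preliminaries of the paper.
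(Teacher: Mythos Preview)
Your proposal is correct and follows essentially the same route as the paper: verify $\wG$-equivariance of $\mathcal{P}_\phi$ via the coaction property of $\alpha$, then check the two composites are identities using $(\id\ovot\hat{\epsilon})\circ\alpha=\id$ and the counit identity $(\hat{\epsilon}\ovot\id)\circ\hat{\Delta}=\id$, exactly as the paper does. Your additional remarks about the Fubini slice-map commutations are a welcome clarification but add nothing beyond what the paper's preliminaries already guarantee.
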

\begin{proof} If $\phi: X \to Y$ is ucp, then $\mathcal{P}_\phi$ is $\wG$-equivariant since for $x\in X$,
\begin{align*}
    (\id \ovot \hat{\Delta})\mathcal{P}_\phi(x)&= (\id \ovot \hat{\Delta})(\phi \ovot \id)\alpha(x)\\&= (\phi \ovot \id \ovot \id)(\id \ovot \hat{\Delta})(\alpha(x))\\
    &= (\phi \ovot \id \ovot \id)(\alpha \ovot \id)(\alpha(x))\\
    &= (\mathcal{P}_\phi \ovot \id)\alpha(x).
\end{align*}
Next, note that for $x\in X$,
\begin{align*}&\phi_{\mathcal{P}_\phi}(x) = (\id \ovot \hat{\epsilon})(\phi \ovot \id)\alpha(x)= \phi (\id \ovot \hat{\epsilon})\alpha(x) = \phi(x)\\
&\mathcal{P}_{\phi_\mathcal{P}}(x)= (\id \ovot \hat{\epsilon}\ovot \id)(\mathcal{P}\ovot \id)\alpha(x)= (\id \ovot \hat{\epsilon} \ovot \id)(\id \ovot \hat{\Delta})\mathcal{P}(x)= \mathcal{P}(x)
\end{align*}
so that $\mathcal{P}\mapsto \phi_\mathcal{P}$ and $\phi\mapsto \mathcal{P}_\phi$ are inverse to each other.
\end{proof}

\begin{lma}\label{wG-injectivity}
    If $X$ is an injective operator system, then $(X \ovot \mathscr{L}^\infty(\wG), \id \ovot \hat{\Delta})$ is $\wG$-injective.
\end{lma}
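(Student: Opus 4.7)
The plan is to use the Poisson transformation from Lemma \ref{lemmaPoissontransform} to reduce equivariant injectivity of $X \ovot \mathscr{L}^\infty(\wG)$ to ordinary injectivity of $X$. Concretely, let $Y \stackrel{\alpha_Y}{\curvearrowleft} \wG$ and $Z \stackrel{\alpha_Z}{\curvearrowleft} \wG$ be $\wG$-operator systems, let $\iota : Y \to Z$ be a $\wG$-equivariant uci map, and let $\phi : Y \to X \ovot \mathscr{L}^\infty(\wG)$ be a $\wG$-equivariant ucp map. We want to produce a $\wG$-equivariant ucp extension $\Phi : Z \to X \ovot \mathscr{L}^\infty(\wG)$.

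First, I would extract from $\phi$ its ``symbol''
\[
\psi := (\id_X \ovot \hat{\epsilon}) \circ \phi : Y \to X,
\]
which is ucp. By Lemma \ref{lemmaPoissontransform} applied to the $\wG$-operator system $(Y, \alpha_Y)$, the original map $\phi$ is the Poisson transform $\mathcal{P}_\psi = (\psi \ovot \id) \circ \alpha_Y$. Next, invoke injectivity of the operator system $X$ to extend $\psi$ along $\iota$ to a ucp map $\Psi : Z \to X$ with $\Psi \circ \iota = \psi$.

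Then define
\[
\Phi := \mathcal{P}_\Psi = (\Psi \ovot \id) \circ \alpha_Z : Z \to X \ovot \mathscr{L}^\infty(\wG).
\]
By Lemma \ref{lemmaPoissontransform}, $\Phi$ is a $\wG$-equivariant ucp map. To check $\Phi \circ \iota = \phi$, use $\wG$-equivariance of $\iota$ and the fact that $\Psi$ extends $\psi$:
\[
\Phi \circ \iota = (\Psi \ovot \id) \circ \alpha_Z \circ \iota = (\Psi \ovot \id) \circ (\iota \ovot \id) \circ \alpha_Y = (\psi \ovot \id) \circ \alpha_Y = \mathcal{P}_\psi = \phi.
\]

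There is essentially no obstacle here beyond invoking the right lemmas in the right order; the one thing to be careful about is that the tensor product maps $\psi \ovot \id$ and $\Psi \ovot \id$ between Fubini tensor products with a von Neumann algebra factor are legitimate ucp maps, which is covered by the standing conventions on the Fubini tensor product recalled in the preliminaries. This completes the proof.
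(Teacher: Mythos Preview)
Your proof is correct and follows essentially the same route as the paper: both extract the symbol $\psi = (\id \ovot \hat{\epsilon})\circ \phi$, use injectivity of $X$ to extend it along $\iota$, and then take the Poisson transform of the extension. The only cosmetic difference is that you invoke Lemma~\ref{lemmaPoissontransform} to identify $\phi = \mathcal{P}_\psi$, whereas the paper reproves this identity inline during the final verification.
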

\begin{proof}
    Let $Y,Z$ be $\wG$-operator systems, $\kappa: Y \to Z$ a unital $\wG$-equivariant complete isometry and $\varphi: Y \to X\ovot \mathscr{L}^\infty(\wG)$ a $\wG$-equivariant ucp map. By injectivity of $X$, there exists a ucp map $\theta: Z \to X$ such that the diagram
$$
\begin{tikzcd}
Y \arrow[d, "\kappa"'] \arrow[rr, "\varphi"] &  & X \ovot\mathscr{L}^\infty(\wG) \arrow[rr, "\id \ovot \hat{\epsilon}"] &  & X \\
Z \arrow[rrrru, "\theta"', dashed]   &  &                                                                  &  &  
\end{tikzcd}$$
commutes. Then, the Poisson transform $\Theta := \mathcal{P}_\theta = (\theta\ovot \id)\alpha_Z: Z \to X \ovot \mathscr{L}^\infty(\wG)$ is a ucp $\wG$-equivariant map. Moreover, we have
$$\Theta \kappa = (\theta\ovot \id)\alpha_Z \kappa = (\theta\ovot \id)(\kappa \ovot \id)\alpha_Y = (\id \ovot \hat{\epsilon}\ovot \id)(\varphi \ovot \id)\alpha_Y = (\id \ovot \hat{\epsilon}\ovot \id)(\id \ovot \hat{\Delta}) \varphi = \varphi \ ,$$
which ends the proof.
\end{proof}

\begin{prop}\label{hat injectivity}
    Let $(X, \alpha)$ be a $\wG$-operator system. The following statements are equivalent:
    \begin{enumerate}
        \item $(X, \alpha)$ is $\wG$-injective.
        \item $X$ is injective and there exists a $\wG$-equivariant ucp conditional expectation $(X\ovot \mathscr{L}^\infty(\wG), \id \ovot \hat{\Delta}) \to (\alpha(X), \id \ovot \hat{\Delta})$.
        \item $X$ is injective and there exists a $\wG$-equivariant ucp conditional expectation $(X\ovot B(L^2(\G)), \id \ovot \hat{\Delta}_r)\to (\alpha(X), \id \ovot \hat{\Delta})$.
    \end{enumerate}
\end{prop}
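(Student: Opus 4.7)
The plan is to prove the equivalences via the cycle $(1)\Rightarrow (3)\Rightarrow (2)\Rightarrow (1)$, mirroring the proof of Proposition~\ref{G-injectivity characterisation} with Lemma~\ref{wG-injectivity} taking the role of Lemma~\ref{G-injective systems}. The whole argument hinges on the identity $\hat\Delta_r|_{\mathscr{L}^\infty(\wG)}=\hat\Delta$ recalled in the preliminaries, which makes $(X\ovot \mathscr{L}^\infty(\wG),\id\ovot\hat\Delta)$ a $\wG$-equivariant uci subsystem of $(X\ovot B(L^2(\G)),\id\ovot\hat\Delta_r)$.

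For $(1)\Rightarrow (3)$, I would fix an embedding $X\subseteq B(\H)$ and observe that the composition
\[X \xrightarrow{\alpha} X\ovot \mathscr{L}^\infty(\wG) \hookrightarrow X\ovot B(L^2(\G)) \hookrightarrow B(\H)\ovot B(L^2(\G))\]
is a $\wG$-equivariant uci from $(X,\alpha)$ into $(B(\H)\ovot B(L^2(\G)),\id\ovot\hat\Delta_r)$. Applying $\wG$-injectivity of $(X,\alpha)$ to this uci together with the $\wG$-equivariant ucp map $\id_X$ yields a $\wG$-equivariant ucp map $\theta\colon B(\H)\ovot B(L^2(\G))\to X$ satisfying $\theta\circ\alpha = \id_X$. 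This realises $X$ as a ucp retract of the injective operator system $B(\H\otimes L^2(\G))$, so $X$ is injective; and $E := \alpha\circ\theta|_{X\ovot B(L^2(\G))}$ is the desired $\wG$-equivariant ucp conditional expectation onto $\alpha(X)$. The implication $(3)\Rightarrow (2)$ is then immediate by restricting $E$ to $X\ovot \mathscr{L}^\infty(\wG)$: equivariance passes unchanged because the two coactions agree on this subsystem.

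For $(2)\Rightarrow (1)$, Lemma~\ref{wG-injectivity} shows that $(X\ovot \mathscr{L}^\infty(\wG),\id\ovot\hat\Delta)$ is $\wG$-injective. Given a $\wG$-equivariant ucp map $\varphi\colon Y\to X$ and a $\wG$-equivariant uci $\iota\colon Y\to Z$, the Poisson transform $\alpha\circ\varphi\colon Y\to X\ovot \mathscr{L}^\infty(\wG)$ is $\wG$-equivariant and extends through $\iota$ to a $\wG$-equivariant ucp map $\Phi\colon Z\to X\ovot \mathscr{L}^\infty(\wG)$; the composition $\Psi := \alpha^{-1}\circ E\circ\Phi\colon Z\to X$ is then the required $\wG$-equivariant ucp extension of $\varphi$, since $E$ restricts to the identity on $\alpha(X)$.

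The main difficulty is not a deep one but a bookkeeping matter: at each step one must carefully track which coaction ($\id\ovot\hat\Delta$ versus $\id\ovot\hat\Delta_r$) is in play, and verify equivariance of the relevant maps. All such checks reduce to the basic compatibility $\hat\Delta_r|_{\mathscr{L}^\infty(\wG)}=\hat\Delta$, which is why the cycle $(1)\Rightarrow (3)\Rightarrow (2)\Rightarrow (1)$ is preferable to attempting $(2)\Rightarrow (3)$ directly (which would require genuinely extending a conditional expectation from $X\ovot\mathscr{L}^\infty(\wG)$ to $X\ovot B(L^2(\G))$).
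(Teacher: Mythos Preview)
Your proof is correct and follows essentially the same approach as the paper: both use Lemma~\ref{wG-injectivity} for the implication $(2)\Rightarrow(1)$ and a retraction argument for $(1)\Rightarrow(3)$ (respectively $(1)\Rightarrow(2)$), exactly as in the proof of Proposition~\ref{G-injectivity characterisation}. One small point worth correcting: you suggest that $(2)\Rightarrow(3)$ directly would be difficult, but in fact the paper notes it can be done immediately, since $x\mapsto(\omega_{\xi_\G}\ovot\id)\hat\Delta_r(x)$ is a $\wG$-equivariant ucp conditional expectation $(B(L^2(\G)),\hat\Delta_r)\to(\mathscr{L}^\infty(\wG),\hat\Delta)$ by coassociativity of $\hat\Delta_r$.
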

\begin{proof} $(1)\iff (2)$ The proof is the same as Proposition \ref{G-injectivity characterisation}.

$(3)\implies (2)$ Obvious.

$(1)\implies (3)$ From the implication $(1)\implies (2)$, we know that $X$ is injective, and the existence of a $\wG$-equivariant ucp conditional expectation $X\ovot B(L^2(\G))\to \alpha(X)$ is immediate from $\wG$-injectivity. Alternatively, we can also prove $(2)\implies (3)$ directly by noting that the map
$$(B(L^2(\G)), \hat{\Delta}_r)\to (\mathscr{L}^\infty(\wG), \hat{\Delta}): x \mapsto (\omega_{\xi_\G}\ovot \id)(\hat{\Delta}_r(x))$$ is a $\wG$-equivariant conditional expectation, which follows immediately from the coassociativity of $\hat{\Delta}_r$.
\end{proof}

We will now introduce the reduced and Fubini crossed product associated to a $\wG$-operator system. To ensure that the reduced crossed product is an operator system, we will need the following lemma:

\begin{lma}
    If $(X, \alpha)$ is a $\wG$-operator system, $x\in X$ and $g\in \mathcal{O}(\G)$, then
    $$(\alpha(x)(1\otimes \lambda(g))^*= \alpha((\id \ovot \ev_{g_{(1)}^*})(\alpha(x^*)))(1\otimes \lambda(g_{(2)}^*)).$$
\end{lma}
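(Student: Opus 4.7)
The plan is to derive the identity from a commutation relation inside $B(L^2(\G))$ between the regular representation $\lambda$ and elements of $\mathscr{L}^\infty(\wG)$, and then apply the coaction axiom for $\alpha$. The key auxiliary identity I would first establish is: for every $g\in\mathcal{O}(\G)$ and $y\in\mathscr{L}^\infty(\wG)$ (with the Sweedler notation $\Delta(g)=\sum g_{(1)}\otimes g_{(2)}$),
$$\lambda(g)\,y \;=\; \sum_{(g)} \big(\id \ovot \ev_{g_{(1)}}\big)(\hat{\Delta}(y))\cdot\lambda(g_{(2)}) \qquad (\star)$$
in $B(L^2(\G))$. To prove $(\star)$, I would start from $\hat{\Delta}(y)=\hat{\Delta}_r(y)=V_{\wG}(y\otimes 1)V_{\wG}^*$ combined with $W_\G=V_{\wG}$, giving the multiplicative-unitary identity $W_\G(y\otimes 1)=\hat{\Delta}(y)W_\G$ in $B(L^2(\G)\otimes L^2(\G))$. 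Since $W_\G\in\mathscr{L}^\infty(\G)\ovot\mathscr{L}^\infty(\wG)$, both sides have their second tensor leg in $\mathscr{L}^\infty(\wG)$, so one can slice with the normal functional $\ev_g$. Using the slice formula $(\id\ovot\ev_g)(W_\G)=\lambda(g)$ together with the Hopf-duality identity $\ev_g(zy)=\sum_{(g)}\ev_{g_{(1)}}(z)\ev_{g_{(2)}}(y)$ for $z,y\in\mathscr{L}^\infty(\wG)$ (which expresses that the product on $c_c(\wG)$ is dual to the coproduct on $\mathcal{O}(\G)$) then yields $(\star)$.

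Given $(\star)$, the main calculation is short. Taking adjoints,
$$(\alpha(x)(1\otimes\lambda(g)))^* \;=\; (1\otimes\lambda(g)^*)\alpha(x)^* \;=\; (1\otimes\lambda(g^*))\alpha(x^*),$$
using that $\alpha$ is $*$-preserving (as a uci map between operator systems). Applying $(\star)$ with $g$ replaced by $g^*$ to the second tensor leg of $\alpha(x^*)\in X\ovot\mathscr{L}^\infty(\wG)$ yields
$$(1\otimes\lambda(g^*))\alpha(x^*) \;=\; \sum_{(g^*)} \big(\id \ovot (\id\ovot\ev_{g^*_{(1)}})\hat{\Delta}\big)(\alpha(x^*))\cdot(1\otimes\lambda(g^*_{(2)})).$$
The coaction property $(\id\ovot\hat{\Delta})\alpha=(\alpha\ovot\id)\alpha$ combined with the elementary slice identity $(\id\ovot\id\ovot\ev_h)\circ(\alpha\ovot\id)=\alpha\circ(\id\ovot\ev_h)$ on $X\ovot\mathscr{L}^\infty(\wG)$ collapses the inner expression to $\alpha\big((\id\ovot\ev_{g^*_{(1)}})\alpha(x^*)\big)$. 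Since $\Delta$ is a $*$-homomorphism, $(g^*)_{(i)}=g_{(i)}^*$, producing the claimed identity.

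The hard part will be the rigorous justification of $(\star)$: the Sweedler expression $\hat{\Delta}(y)=\sum y_{(1)}\otimes y_{(2)}$ is only formal for arbitrary $y\in\mathscr{L}^\infty(\wG)$, so one first verifies the duality identity on the $\sigma$-weakly dense subalgebra $\hat{\lambda}(c_c(\wG))$ (where it reduces to the definition of the product on $c_c(\wG)=\mathcal{O}(\G)^*$) and then extends by normality in each variable. Once $(\star)$ is established, what remains is a routine manipulation with the coaction axiom and the fact that $\Delta$ commutes with the involution.
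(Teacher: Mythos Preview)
Your proof is correct and follows essentially the same route as the paper: both establish the commutation identity $\lambda(h)z=\sum(\id\ovot\ev_{h_{(1)}})(\hat{\Delta}(z))\lambda(h_{(2)})$ for $z\in\mathscr{L}^\infty(\wG)$ and then feed it through the coaction property via slicing on the first leg. The only cosmetic difference is how $(\star)$ is obtained---the paper derives it from the concrete commutation relation $\hat{\lambda}(\omega(\,\cdot\,h_{(1)}))\lambda(h_{(2)})=\lambda(h)\hat{\lambda}(\omega)$ on $\hat{\lambda}(c_c(\wG))$ and then extends by normality, whereas you slice the identity $W_\G(y\otimes 1)=\hat{\Delta}(y)W_\G$ with $\ev_g$---but these are equivalent reformulations of the same duality fact.
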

\begin{proof} We assume $X\subseteq B(\H)$.
    If $\omega \in c_c(\wG)$ and $h\in \mathcal{O}(\G)$, it is not hard to see that the commutation relation
    $$\hat{\lambda}(\omega(-h_{(1)}))\lambda(h_{(2)}) = \lambda(h)\hat{\lambda}(\omega)$$
    holds. Consequently, we see that
    \begin{align*}
        (\id \ovot \ev_{h_{(1)}})(\hat{\Delta}(z))\lambda(h_{(2)}) = \lambda(h)z
    \end{align*}
    for every $z\in \mathscr{L}^\infty(\wG)$ and every $h\in \mathcal{O}(\G)$. Thus, if $\phi\in B(\H)_*$, we calculate
    \begin{align*}
        &(\phi \ovot \id)[\alpha((\id \ovot \ev_{g_{(1)}^*})\alpha(x^*))(1\otimes \lambda(g_{(2)}^*))]\\
        &= (\phi \ovot \id)[(\id \ovot \id \ovot \ev_{g_{(1)}^*})((\id \ovot \hat{\Delta})(\alpha(x^*)))(1\otimes \lambda(g_{(2)}^*))]\\
        &= (\id \ovot \ev_{g_{(1)}^*})[\hat{\Delta}((\phi \ovot \id)(\alpha(x^*)))]\lambda(g_{(2)}^*))\\
        &= \lambda(g^*)(\phi \ovot \id)(\alpha(x^*))\\&= (\phi \ovot \id)((1\otimes \lambda(g^*))\alpha(x^*)).
    \end{align*}
    The lemma follows.
\end{proof}

\begin{defn}\label{reduced}
Let $(X, \alpha)$ be a $\wG$-operator system. We define the reduced crossed product to be the operator system $X\rtimes_{\alpha,r} \wG := \clspan^{\|\cdot\|}\{\alpha(x)(1\otimes \lambda(a)): x\in X, a \in \mathcal{O}(\G)\}\subseteq X \ovot B(L^2(\G))$.
\end{defn}

\begin{defn}\label{Fubini}
Let $(X, \alpha)$ be a $\wG$-operator system. We define the Fubini crossed product to be the operator system
$$ X \rtimes_{\alpha, \mathcal{F}} \wG:= \{z \in X \ovot B(L^2(\G)): (\id_X \ovot \hat{\Delta}_l)(z)= (\alpha \ovot \id)(z)\}.$$
\end{defn}

\begin{rem}
    \normalfont{If $X$ is a von Neumann algebra and $\alpha: X \to X \ovot \mathscr{L}^\infty(\wG)$ is a normal unital $*$-homomorphism, then $X\rtimes_{\alpha, \mathcal{F}} \wG$ agrees with the usual von Neumann crossed product. However, there is another valuable point of view. We could view $X\rtimes_{\alpha, \mathcal{F}} \wG$ as the cotensor product (in the setting of operator systems) of $X$ and the von Neumann algebra $B(L^2(\G))$ with respect to the left action $\hat{\Delta}_l$ and the right action $\alpha$. See also \cite[Section 7]{cotensor} for the use of the cotensor product in the context of compact quantum groups acting on $C^*$-algebras and von Neumann algebras. We use this point of view to construct dual actions on the Fubini crossed product, see Theorem \ref{important action}, Proposition \ref{action III} and also \cite[Lemma 7.2]{cotensor}.
    $\Di$}
\end{rem}

\begin{ex}
    \normalfont{Let $\Gamma$ be a discrete group and $\G = \widehat{\Gamma}$ be its dual CQG as in Example \ref{example}. If $\alpha: X \to X \ovot \ell^\infty(\Gamma)$ is a coaction as in Definition \ref{action DQG} then we obtain a group action $\beta: \Gamma \to\operatorname{Aut}(X)$ by
    $$\beta_g:= (\id \ovot \ev_g)\circ \alpha, \quad g \in \Gamma$$
    where $\ev_g(f) = f(g^{-1})$ for $g\in \Gamma$ and $f\in \ell^\infty(\Gamma)$. If $z \in X \ovot B(\ell^2(\Gamma))$ we can think of it as an infinite matrix $[z_{g,h}]_{g,h \in \Gamma}$ with entries in $X$, where $z_{g,h}= (\id \ovot \omega_{\delta_g, \delta_h})(z) \in X$. It then follows that 
    \begin{align*}
        z \in X \rtimes_{\alpha, \mathcal{F}} \wG &\iff (\id_X \ovot \hat{\Delta}_l)(z) = (\alpha \ovot \id_{B(\ell^2(\Gamma))})(z)\\
        &\iff \forall g\in \Gamma: (\id \ovot \ev_g \ovot \id)(\id \ovot \hat{\Delta}_l)(z) = (\id \ovot \ev_g \ovot \id)(\alpha \ovot \id)(z)\\
        &\iff \forall g \in \Gamma: (1 \otimes (\ev_g\ovot \id)(W_{\wG})^*)z(1 \otimes (\ev_g\ovot \id)(W_{\wG})) = (\beta_{g}\ovot \id)(z)\\
        &\iff \forall g \in \Gamma: (1\otimes \rho_g^*)z(1\otimes \rho_g) = (\beta_{g}\ovot \id)(z)\\
        & \iff \forall g,h,k \in \Gamma: (\id \ovot \omega_{\delta_h, \delta_k})(\id \ovot \Ad(\rho_g^*))(z) = (\id \ovot \omega_{\delta_h, \delta_k})(\beta_{g}\ovot \id)(z)\\
        &\iff \forall g,h,k \in \Gamma: (\id \ovot \omega_{\delta_{hg^{-1}}, \delta_{kg^{-1}}})(z) = \beta_{g}((\id \ovot \omega_{\delta_h, \delta_k})(z))\\
        &\iff \forall g,h,k \in \Gamma: z_{hg^{-1}, kg^{-1}}= \beta_{g}(z_{h,k}) \ .
    \end{align*}
    Consequently,
    $$X\rtimes_{\alpha, \mathcal{F}}\wG= \{z\in X \ovot B(\ell^2(\Gamma)): \forall g,h,k\in \Gamma: \beta_{g^{-1}}(z_{h,k}) = z_{hg, kg}\} \ ,$$
    and we recover Hamana's Fubini crossed product  \cite[II, Section 3]{Ham3}.
    $\Di$}
\end{ex}

\begin{ex}\normalfont{Let $X\subseteq B(\H)$ be an operator system and $\tau: X \to X \ovot \mathscr{L}^\infty(\wG)$ the trivial action $\tau(x)= x \otimes 1$. We claim that $X \rtimes_{\tau, \mathcal{F}} \wG = X \ovot \mathscr{L}^\infty(\G).$
Indeed, it is easily seen that the right hand side is contained in the left hand side. Conversely, using that 
$\{(\omega \ovot \id)(W_{\wG}): \omega \in B(L^2(\G))_*\}$ generates the von Neumann algebra $\mathscr{L}^\infty(\G)'$, it readily follows that
$$\{x\in B(L^2(\G)): W_{\wG}^*(1\otimes x)W_{\wG}= 1 \otimes x\}= \mathscr{L}^\infty(\G).$$
Thus, if $z\in X\rtimes_{\tau, \mathcal{F}} \wG$, then $W_{\wG, 23}^*z_{13}W_{\wG, 23} = z_{13}$. If $\omega \in B(\H)_*$, then applying the slice map $\omega \ovot \id\ovot \id$ to this equality yields 
$$W_{\wG}^*(1\otimes (\omega \ovot \id)(z))W_{\wG}= 1\otimes (\omega \ovot \id)(z) \ ,$$
so that $(\omega \ovot \id)(z)\in \mathscr{L}^\infty(\G)$. By definition of the Fubini tensor product, this means precisely that $z \in X \ovot \mathscr{L}^\infty(\G)$. Note also that trivially 
$X \rtimes_{\tau,r} \wG = X \otimes C_r(\G)$. $\Di$}
\end{ex}
\begin{prop} Let $z\in X \rtimes_{\alpha, \mathcal{F}} \wG$. Then $(1\otimes \lambda(a))z(1\otimes \lambda(b))\in  X \rtimes_{\alpha, \mathcal{F}} \wG$ for all $a,b\in \mathcal{O}(\G)$. In particular, $X\rtimes_{\alpha, \mathcal{F}}\wG$ has a $C_r(\G)$-bimodule structure. Moreover, $X\rtimes_{\alpha,r}\wG \subseteq  X \rtimes_{\alpha, \mathcal{F}} \wG$.
\end{prop}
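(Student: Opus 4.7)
The strategy rests on two observations. First, since $W_{\wG}\in \mathscr{L}^\infty(\wG)\ovot \mathscr{L}^\infty(\G)'$, the element $1\otimes\lambda(a)$ commutes with $W_{\wG}$ for every $a\in\mathcal{O}(\G)$, so
\[\hat\Delta_l(\lambda(a)) = W_{\wG}^*(1\otimes\lambda(a))W_{\wG} = 1\otimes\lambda(a).\]
Second, $\id\ovot\hat\Delta_l$ acts on $B(\H\otimes L^2(\G))$ by conjugation with the unitary $1\otimes W_{\wG}$ (after passing to the $(1,3)$-legs), hence is (the restriction of) a $*$-homomorphism and in particular multiplicative wherever products remain inside the relevant Fubini products.

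For the bimodule claim, fix $z\in X\rtimes_{\alpha,\mathcal F}\wG$ and $a,b\in\mathcal{O}(\G)$, and set $y := (1\otimes\lambda(a))z(1\otimes\lambda(b))$. That $y\in X\ovot B(L^2(\G))$ is checked by slicing with any $\chi\in B(L^2(\G))_*$: one gets $(\id\ovot\chi)(y) = (\id\ovot\chi_{a,b})(z)\in X$, where $\chi_{a,b}(c):=\chi(\lambda(a)c\lambda(b))$ is again normal. From multiplicativity of $\id\ovot\hat\Delta_l$ combined with the first observation we obtain
\[(\id\ovot\hat\Delta_l)(y) = (1\otimes 1\otimes\lambda(a))(\id\ovot\hat\Delta_l)(z)(1\otimes 1\otimes\lambda(b)).\]
The analogous identity for $\alpha\ovot\id$ is the main technical step and does not follow formally from multiplicativity, since $\alpha$ is only uci on the operator system $X$. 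I will instead verify it slicewise using the Fubini slice formula $(\id\ovot\id\ovot g)\circ(\alpha\ovot\id) = \alpha\circ(\id\ovot g)$ valid for $g\in B(L^2(\G))_*$: both $(\alpha\ovot\id)(y)$ and $(1\otimes 1\otimes\lambda(a))(\alpha\ovot\id)(z)(1\otimes 1\otimes\lambda(b))$ slice on the third leg to $\alpha((\id\ovot g_{a,b})(z))$, so they agree by the slice characterization of the Fubini tensor product. Combining the two displayed identities with the Fubini condition $(\id\ovot\hat\Delta_l)(z)=(\alpha\ovot\id)(z)$ on $z$ gives $(\id\ovot\hat\Delta_l)(y)=(\alpha\ovot\id)(y)$, so $y\in X\rtimes_{\alpha,\mathcal F}\wG$, and the $C_r(\G)$-bimodule structure follows.

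For the inclusion $X\rtimes_{\alpha,r}\wG\subseteq X\rtimes_{\alpha,\mathcal F}\wG$, note first that $X\rtimes_{\alpha,\mathcal F}\wG$ is norm-closed as the equalizer of two norm-continuous maps. Next, for $x\in X$ one has $\alpha(x)\in X\rtimes_{\alpha,\mathcal F}\wG$: since $\hat\Delta_l$ and $\hat\Delta$ agree on $\mathscr{L}^\infty(\wG)$, the Fubini condition on $\alpha(x)$ collapses to the coaction identity $(\id\ovot\hat\Delta)\alpha = (\alpha\ovot\id)\alpha$. The bimodule property just established then yields $\alpha(x)(1\otimes\lambda(a))\in X\rtimes_{\alpha,\mathcal F}\wG$ for all $x\in X$ and $a\in\mathcal{O}(\G)$, and norm-closedness shows that the closed linear span of such elements also lies inside.

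The main obstacle, as indicated, is the identity $(\alpha\ovot\id)(y) = (1\otimes 1\otimes\lambda(a))(\alpha\ovot\id)(z)(1\otimes 1\otimes\lambda(b))$: one cannot distribute $\alpha\ovot\id$ across the product in the naive way, because $\alpha$ is not multiplicative on $X$. The slice-map argument circumvents this by testing the desired equality against every normal functional on the last leg, reducing everything to the genuinely legal equation $(\id\ovot g)\circ(\alpha\ovot\id)(w) = \alpha((\id\ovot g)(w))$.
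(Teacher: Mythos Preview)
Your proof is correct and matches the paper's almost line for line: compute $(\id\ovot\hat\Delta_l)(y)$ using multiplicativity and $\hat\Delta_l(\lambda(a))=1\otimes\lambda(a)$, identify the result with $(\alpha\ovot\id)(y)$, and then observe $\alpha(X)\subseteq X\rtimes_{\alpha,\mathcal F}\wG$ by the coaction identity. The step you flag as the ``main obstacle'' --- that $\alpha\ovot\id$ commutes with left and right multiplication by $1\otimes\lambda(a)$ --- is treated as immediate in the paper, and indeed it follows at once from the fact that $1\otimes C_r(\G)$ lies in the multiplicative domain of the ucp map $\alpha\ovot\id$ (or, equivalently, by your slice argument); so this is not a genuine difficulty.
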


\begin{proof} We calculate for $a,b \in \mathcal{O}(\G)$ and $z\in  X \rtimes_{\alpha, \mathcal{F}} \wG$ that 
\begin{align*}
        (\id \ovot \hat{\Delta}_l)((1\otimes \lambda(a))z(1\otimes \lambda(b)))&= (\id \ovot \hat{\Delta}_l)(1\otimes\lambda(a)) (\id \ovot \hat{\Delta}_l)(z)(\id \ovot \hat{\Delta}_l)(1\otimes \lambda(b))\\
        &=(1\otimes 1\otimes \lambda(a))(\alpha \ovot \id)(z)(1\otimes 1\otimes \lambda(b))\\
        &= (\alpha \ovot \id)((1\otimes \lambda(a))z(1\otimes \lambda(b))) \ ,
    \end{align*}
    so that $(1\otimes \lambda(a))z(1\otimes \lambda(b))\in  X \rtimes_{\alpha, \mathcal{F}} \wG$. The inclusion $\alpha(X)\subseteq X\rtimes_{\alpha, \mathcal{F}} \wG$ is immediately clear from the coaction property. 
\end{proof}

\begin{lma}
     $(X \rtimes_{\alpha, \mathcal{F}} \wG) \cap (X \ovot \mathscr{L}^\infty(\wG)) = \alpha(X)$.
\end{lma}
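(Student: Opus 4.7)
The inclusion $\alpha(X) \subseteq (X \rtimes_{\alpha, \mathcal{F}} \wG) \cap (X \ovot \mathscr{L}^\infty(\wG))$ is immediate: clearly $\alpha(X) \subseteq X \ovot \mathscr{L}^\infty(\wG)$, while $\alpha(X) \subseteq X \rtimes_{\alpha, \mathcal{F}} \wG$ is exactly the observation already made in the previous proposition, which is just the coaction identity $(\alpha \ovot \id)\alpha = (\id \ovot \hat{\Delta})\alpha$ combined with the fact that $\hat{\Delta}_l$ and $\hat{\Delta}$ agree on $\mathscr{L}^\infty(\wG)$.

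For the reverse inclusion, the strategy is a standard counital slicing argument. Let $z \in (X \rtimes_{\alpha, \mathcal{F}} \wG) \cap (X \ovot \mathscr{L}^\infty(\wG))$. Since $z$ has its second leg in $\mathscr{L}^\infty(\wG)$, which is $\sigma$-weakly closed, the identity $\hat{\Delta}_l = \hat{\Delta}$ on $\mathscr{L}^\infty(\wG)$ combined with Lemma \ref{Fubinitensor} shows that $(\id \ovot \hat{\Delta}_l)(z) = (\id \ovot \hat{\Delta})(z)$ as elements of $X \ovot \mathscr{L}^\infty(\wG) \ovot \mathscr{L}^\infty(\wG)$. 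The defining equation of the Fubini crossed product therefore reads
$$(\id \ovot \hat{\Delta})(z) = (\alpha \ovot \id)(z),$$
and both sides lie in $X \ovot \mathscr{L}^\infty(\wG) \ovot \mathscr{L}^\infty(\wG)$.

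Now I would apply the slice $\id_X \ovot \id_{\mathscr{L}^\infty(\wG)} \ovot \hat{\epsilon}$ to both sides. On the left, the counit relation $(\id \ovot \hat{\epsilon})\hat{\Delta} = \id$ on $\mathscr{L}^\infty(\wG)$ gives back $z$. On the right, one obtains $\alpha\bigl((\id \ovot \hat{\epsilon})(z)\bigr)$, since $\hat{\epsilon}$ only touches the third leg and $\alpha$ acts in the first two. Setting $y := (\id \ovot \hat{\epsilon})(z) \in X$, we conclude $z = \alpha(y) \in \alpha(X)$.

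The only point that requires a touch of care is justifying that the relevant Fubini slice maps and compositions are well-defined at each step. This is not a serious obstacle: $\mathscr{L}^\infty(\wG)$ is $\sigma$-weakly closed, $\alpha$ is uci into $X \ovot \mathscr{L}^\infty(\wG)$, and $\hat{\epsilon} = \omega_{\xi_\G}$ is a normal functional, so the continuity hypotheses in Lemma \ref{Fubinitensor} and the subsequent propositions are all satisfied, and the two slice operations can be composed in either order.
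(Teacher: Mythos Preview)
Your proof is correct and follows essentially the same approach as the paper: both use that $\hat{\Delta}_l$ and $\hat{\Delta}$ agree on $\mathscr{L}^\infty(\wG)$ to rewrite the defining equation as $(\alpha \ovot \id)(z) = (\id \ovot \hat{\Delta})(z)$, and then slice with $\id \ovot \id \ovot \hat{\epsilon}$ to conclude $z = \alpha((\id \ovot \hat{\epsilon})(z))$. Your version is simply more explicit about the technical justifications for the Fubini tensor manipulations.
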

\begin{proof}
    Assume that $z\in  (X \rtimes_{\alpha, \mathcal{F}} \wG)\cap (X \ovot \mathscr{L}^\infty(\wG)).$ Then $(\alpha \ovot \id)(z) = (\id \ovot \hat{\Delta}_l)(z)= (\id \ovot \hat{\Delta})(z)$. Applying the slice map $\id \ovot \id \ovot \hat{\epsilon}$ to this equality, we get $z= \alpha((\id \ovot \hat{\epsilon})(z)).$
\end{proof}

\begin{thm}\label{important action} If $(X, \alpha)$ is a $\wG$-operator system, then the $\G$-$W^*$-action $\id_X\ovot \Delta: X \ovot B(L^2(\G))\to X \ovot B(L^2(\G))\ovot \mathscr{L}^\infty(\G)$ restricts to a $\G$-$W^*$-action $\id \ovot \Delta: X\rtimes_{\alpha,\mathcal{F}} \wG\to (X\rtimes_{\alpha, \mathcal{F}} \wG)\ovot \mathscr{L}^\infty(\G)$ and a $\G$-$C^*$-action $\id \ovot \Delta: X \rtimes_{\alpha,r}\wG \to (X\rtimes_{\alpha,r}\wG)\otimes C_r(\G)$. Moreover, 
$$\mathcal{R}_{\operatorname{alg}}(X\rtimes_{\alpha,\mathcal{F}} \wG, \id \ovot \Delta) = \lspan\{\alpha(x)(1\otimes \lambda(a)): x \in X, a \in \mathcal{O}(\G)\}, \quad \operatorname{Fix}(X\rtimes_{\alpha,\mathcal{F}} \wG, \id \ovot \Delta)= \alpha(X).$$ 
\end{thm}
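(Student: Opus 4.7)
The plan is to reduce everything to a single key commutativity on $B(L^2(\G))$:
\[
(\hat{\Delta}_l \ovot \id) \circ \Delta = (\id \ovot \Delta) \circ \hat{\Delta}_l
\]
as normal $*$-homomorphisms $B(L^2(\G)) \to B(L^2(\G)) \ovot B(L^2(\G)) \ovot \mathscr{L}^\infty(\G)$. To prove it, I would observe that $V_\G \in \mathscr{L}^\infty(\wG)' \ovot \mathscr{L}^\infty(\G)$ forces $\Delta(\hat{\lambda}(\omega)) = \hat{\lambda}(\omega) \otimes 1$ (triviality of $\Delta$ on $\mathscr{L}^\infty(\wG)$), while $W_{\wG} \in \mathscr{L}^\infty(\wG) \ovot \mathscr{L}^\infty(\G)'$ forces $\hat{\Delta}_l(\lambda(a)) = 1 \otimes \lambda(a)$. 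On each family of generators the identity becomes trivial; by multiplicativity and the $\sigma$-weak density of $\hat{\lambda}(c_c(\wG))\lambda(\mathcal{O}(\G))$ in $B(L^2(\G))$, it extends to all of $B(L^2(\G))$.

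With this in hand, for $z \in X\rtimes_{\alpha,\mathcal{F}} \wG$ the chain of identities
\begin{align*}
(\id \ovot \hat{\Delta}_l \ovot \id)(\id \ovot \Delta)(z) &= (\id \ovot (\hat{\Delta}_l \ovot \id) \circ \Delta)(z) = (\id \ovot (\id \ovot \Delta) \circ \hat{\Delta}_l)(z) \\
&= (\id \ovot \id \ovot \Delta)(\id \ovot \hat{\Delta}_l)(z) = (\id \ovot \id \ovot \Delta)(\alpha \ovot \id)(z) \\
&= (\alpha \ovot \id \ovot \id)(\id \ovot \Delta)(z)
\end{align*}
shows $(\id \ovot \Delta)(z) \in (X\rtimes_{\alpha,\mathcal{F}}\wG) \ovot \mathscr{L}^\infty(\G)$. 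The coaction property is inherited from coassociativity of $\Delta$, and $\id \ovot \Delta$ is uci because $\Delta$ is a normal injective $*$-homomorphism on $B(L^2(\G))$. For the reduced crossed product, triviality of $\Delta$ on $\mathscr{L}^\infty(\wG)$ yields
\[
(\id \ovot \Delta)(\alpha(x)(1 \otimes \lambda(a))) = \sum_{(a)} \alpha(x)(1 \otimes \lambda(a_{(1)})) \otimes \lambda(a_{(2)}) \in (X\rtimes_{\alpha,r}\wG) \odot \mathcal{O}(\G_r),
\]
and the Podleś density for $\id \ovot \Delta$ reduces to the algebraic Podleś density for the coproduct on $C_r(\G)$.

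For the algebraic regular elements, Proposition \ref{important action II} provides a unique expansion $z = \sum_{\pi, i, j} z_{ij}^\pi(1 \otimes \lambda(u_{ij}^\pi))$ with $z_{ij}^\pi \in X \ovot \mathscr{L}^\infty(\wG)$. Since $\hat{\Delta}_l$ is multiplicative, agrees with $\hat{\Delta}$ on $\mathscr{L}^\infty(\wG)$, and satisfies $\hat{\Delta}_l(\lambda(u_{ij}^\pi)) = 1 \otimes \lambda(u_{ij}^\pi)$, the Fubini condition $(\id \ovot \hat{\Delta}_l)(z) = (\alpha \ovot \id)(z)$ expands to an identity whose coefficients, extracted by slicing against the orthogonality relations of $\varphi_\G$, force $(\id \ovot \hat{\Delta})(z_{ij}^\pi) = (\alpha \ovot \id)(z_{ij}^\pi)$ for each $(\pi, i, j)$. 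Applying $\id \ovot \id \ovot \hat{\epsilon}$ then gives $z_{ij}^\pi = \alpha((\id \ovot \hat{\epsilon})(z_{ij}^\pi)) \in \alpha(X)$, so $z$ lies in $\lspan\{\alpha(x)(1 \otimes \lambda(a)): x \in X, a \in \mathcal{O}(\G)\}$. The fixed-point identity is then immediate from Proposition \ref{important action II} combined with the preceding lemma $(X\rtimes_{\alpha,\mathcal{F}}\wG) \cap (X \ovot \mathscr{L}^\infty(\wG)) = \alpha(X)$. The main obstacle is the commutativity lemma, which requires careful tracking of which algebras the legs of $V_\G$ and $W_{\wG}$ inhabit in order to reduce both sides to standard coproducts on a dense subalgebra.
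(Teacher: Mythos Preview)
Your proof is correct and follows the paper's overall strategy: both hinge on the commutativity $(\hat{\Delta}_l\ovot\id)\Delta=(\id\ovot\Delta)\hat{\Delta}_l$ on $B(L^2(\G))$, proved by checking on the $\sigma$-weakly dense set $\hat{\lambda}(c_c(\wG))\lambda(\mathcal{O}(\G))$, and both then deduce that $\id\ovot\Delta$ preserves the Fubini crossed product.

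The one genuine difference is in handling the algebraic regular elements. After writing $z=\sum z_{ij}^\pi(1\otimes\lambda(u_{ij}^\pi))$ via Proposition~\ref{important action II}, the paper exploits the hypothesis $(\id\ovot\Delta)(z)\in (X\rtimes_{\alpha,\mathcal{F}}\wG)\odot\mathcal{O}(\G_r)$: reading off coefficients in the last leg gives $\sum_i z_{ij'}^{\pi'}(1\otimes\lambda(u_{ik'}^{\pi'}))\in X\rtimes_{\alpha,\mathcal{F}}\wG$, and then the $C_r(\G)$-bimodule structure plus the intersection lemma $(X\rtimes_{\alpha,\mathcal{F}}\wG)\cap(X\ovot\mathscr{L}^\infty(\wG))=\alpha(X)$ yield $z_{ij}^\pi\in\alpha(X)$. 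You instead expand the defining Fubini equation $(\id\ovot\hat{\Delta}_l)(z)=(\alpha\ovot\id)(z)$ termwise---using that conjugation by $W_{\wG}$ is multiplicative and that $1\otimes\lambda(a)$ lies in the multiplicative domain of $\alpha\ovot\id$---and then invoke the uniqueness of the expansion (implicit in the proof of Proposition~\ref{important action II}) to conclude $(\id\ovot\hat{\Delta})(z_{ij}^\pi)=(\alpha\ovot\id)(z_{ij}^\pi)$, whence applying $\id\ovot\id\ovot\hat{\epsilon}$ gives $z_{ij}^\pi\in\alpha(X)$. Your route avoids the separate bimodule and intersection lemmas but trades them for the module property of $\alpha\ovot\id$ and the uniqueness claim, which are of comparable weight; both arguments are equally clean.
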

\begin{proof} We first note that the diagram
$$
\begin{tikzcd}
                                                                        & B(L^2(\G)) \arrow[ld, "\hat{\Delta}_l"'] \arrow[rd, "\Delta"]       &                                                                              \\
\mathscr{L}^\infty(\wG)\ovot B(L^2(\G)) \arrow[rd, "\id \ovot \Delta"'] &                                                                     & B(L^2(\G))\ovot \mathscr{L}^\infty(\G) \arrow[ld, "\hat{\Delta}_l\ovot \id"] \\
                                                                        & \mathscr{L}^\infty(\wG)\ovot B(L^2(\G))\ovot \mathscr{L}^\infty(\G) &                                                                             
\end{tikzcd}$$
commutes. To see this, note that commutativity of this diagram is equivalent to 
    $$V_{\G, 23}W_{\wG, 12}^*(1\otimes x \otimes 1)W_{\wG, 12}V_{\G, 23}^* = W_{\wG, 12}^*V_{\G, 23}(1\otimes x \otimes 1)V_{\G, 23}^*W_{\wG, 12}, \quad x \in B(L^2(\G)).$$
By weak density, it suffices to show this equality for $x\in B_0(L^2(\G))$. We have $$[\hat{\lambda}(\omega)\lambda(a): \omega \in c_c(\wG), a \in \mathcal{O}(\G)]= B_0(L^2(\G)).$$
If $a\in \mathcal{O}(\G)$ and $\omega \in c_c(\wG)$, it therefore is sufficient to prove the desired equality for operators $x=\hat{\lambda}(\omega)\lambda(a)$. Since $V_{\G} \in \mathscr{L}^\infty(\wG)'\ovot \mathscr{L}^\infty(\G)$ and $W_{\wG}\in \mathscr{L}^\infty(\wG)\ovot \mathscr{L}^\infty(\G)'$, a calculation shows
\begin{align*}
    V_{\G, 23}W_{\wG, 12}^*(1\otimes \hat{\lambda}(\omega)\lambda(a)\otimes 1)W_{\wG, 12}V_{\G, 23}^* &= (\hat{\Delta}(\hat{\lambda}(\omega))\otimes 1)(1\otimes \Delta(\lambda(a)))\\
    &= W_{\wG, 12}^*V_{\G, 23}(1\otimes \hat{\lambda}(\omega)\lambda(a) \otimes 1)V_{\G, 23}^*W_{\wG, 12} \ ,
\end{align*}
so we conclude that the above diagram is commutative. Let $\omega \in B(\H)_*$ where $X\subseteq B(\H)$ and $z \in X\rtimes_{\alpha,\mathcal{F}}\wG$. We calculate
    \begin{align*}
        (\alpha \ovot \id)(\id \ovot \id \ovot \omega)(\id \ovot \Delta)(z)&= (\id \ovot \id \ovot \id \ovot \omega)(\alpha \ovot \id \ovot \id)(\id \ovot \Delta)(z)\\
        &= (\id \ovot \id \ovot \id \ovot \omega)(\id \ovot \id \ovot \Delta)(\alpha \ovot \id)(z)\\
        &= (\id \ovot \id \ovot \id \ovot \omega)(\id \ovot \id \ovot \Delta)(\id \ovot \hat{\Delta}_l)(z)\\
        &= (\id \ovot \id \ovot \id \ovot \omega)(\id \ovot \hat{\Delta}_l \ovot \id)(\id \ovot \Delta)(z)\\
        &= (\id \ovot \hat{\Delta}_l)(\id \ovot \id \ovot \omega)(\id \ovot \Delta)(z) \ ,
    \end{align*}
    so it follows that $(\id \ovot \id \ovot \omega)(\id \ovot \Delta)(z)\in X \rtimes_{\alpha,\mathcal{F}}\wG$. By definition of the Fubini tensor product, this means that $$(\id \ovot \Delta)(z) \in [(X\rtimes_{\alpha,\mathcal{F}}\wG) \ovot B(L^2(\G))]\cap[X\ovot B(L^2(\G)) \ovot \mathscr{L}^\infty(\G)]= (X\rtimes_{\alpha,\mathcal{F}}\wG)\ovot \mathscr{L}^\infty(\G).$$ Thus, $\id \ovot \Delta$ restricts to a $\G$-$W^*$-action on the Fubini crossed product. 

Assume that $z\in \mathcal{R}_{\operatorname{alg}}(X\rtimes_{\alpha, \mathcal{F}} \wG)$. Then by Proposition \ref{important action II} also $z\in \mathcal{R}_{\operatorname{alg}}(X \ovot B(L^2(\G))) = (X \ovot \mathscr{L}^\infty(\wG))(1\otimes \mathcal{O}(\G))$. We can therefore write $$z= \sum_{\pi \in \Irr(\G)} \sum_{i,j=1}^{n_\pi} z_{ij}^\pi(1\otimes u_{ij}^\pi) \ ,$$
    where $z_{ij}^\pi \in X \ovot \mathscr{L}^\infty(\wG)$. By assumption, 
    $$(X\rtimes_{\alpha,\mathcal{F}} \wG) \odot \mathcal{O}(\G)\ni (\id \ovot \Delta)(z) = \sum_{\pi \in \Irr(\G)} \sum_{i,j,k=1}^{n_\pi} z_{ij}^\pi(1\otimes u_{ik}^\pi)\otimes u_{kj}^\pi.$$
    It follows that $\sum_{i=1}^{n_{\pi'}} z_{ij'}^{\pi'}(1\otimes u_{ik'}^{\pi'})\in X\rtimes_{\alpha,\mathcal{F}} \wG$ for fixed $\pi', j',k'$. Since $X\rtimes_{\alpha,\mathcal{F}} \wG$ is a $C_r(\G)$-bimodule, we see that
    $$X\rtimes_{\alpha,\mathcal{F}} \wG \ni \sum_{k'=1}^{n_{\pi'}}\left(\sum_{i=1}^{n_{\pi'}} z_{ij'}^{\pi'} (1\otimes u_{ik'}^{\pi'})\right)(1\otimes (u_{i'k'}^{\pi'})^*) = z_{i'j'}^{\pi'}.$$
    It follows that $z_{ij}^\pi \in (X\rtimes_{\alpha,\mathcal{F}} \wG)\cap (X\ovot \mathscr{L}^\infty(\wG))= \alpha(X)$ for all $\pi, i,j$. Thus, we can write $z_{ij}^\pi = \alpha(x_{ij}^\pi)$ for unique $x_{ij}^\pi\in X$ and it follows that
    $$z= \sum_{\pi \in \Irr(\G)} \sum_{i,j=1}^{n_\pi}\alpha(x_{ij}^\pi)(1\otimes u_{ij}^\pi).$$
    The theorem follows.    
\end{proof}

We then find the following pleasant relation between the Fubini crossed product and the reduced crossed product:
\begin{cor} $\mathcal{R}(X\rtimes_{\alpha,\mathcal{F}} \wG, \id \ovot \Delta)= X\rtimes_{\alpha,r} \wG$.
\end{cor}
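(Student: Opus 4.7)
The corollary is essentially immediate from the last equality in Theorem \ref{important action}. The plan is simply to take norm closures of both sides of
$$\mathcal{R}_{\operatorname{alg}}(X\rtimes_{\alpha,\mathcal{F}}\wG,\id\ovot\Delta)=\lspan\{\alpha(x)(1\otimes\lambda(a)):x\in X,\ a\in\mathcal{O}(\G)\}.$$
By the definition $\mathcal{R}(X\rtimes_{\alpha,\mathcal{F}}\wG,\id\ovot\Delta)=\overline{\mathcal{R}_{\operatorname{alg}}(X\rtimes_{\alpha,\mathcal{F}}\wG,\id\ovot\Delta)}^{\|\cdot\|}$, the left-hand side becomes the regular elements of the Fubini crossed product for the $\G$-$W^*$-action $\id\ovot\Delta$. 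By Definition \ref{reduced}, the norm-closure of the right-hand side is precisely $X\rtimes_{\alpha,r}\wG$.

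So the only content beyond quoting Theorem \ref{important action} is the observation that the reduced crossed product actually lies in the Fubini crossed product (so that taking the regular elements inside $X\rtimes_{\alpha,\mathcal{F}}\wG$ makes sense and agrees with taking them inside $X\ovot B(L^2(\G))$). This was already noted in the proposition preceding the lemma $(X\rtimes_{\alpha,\mathcal{F}}\wG)\cap(X\ovot\mathscr{L}^\infty(\wG))=\alpha(X)$, where the inclusion $X\rtimes_{\alpha,r}\wG\subseteq X\rtimes_{\alpha,\mathcal{F}}\wG$ was established via the coaction property together with the $C_r(\G)$-bimodule structure.

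I do not foresee any real obstacle: the corollary is a one-line consequence of unpacking definitions and invoking the identified spectral decomposition from Theorem \ref{important action}.
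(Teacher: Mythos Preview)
Your proposal is correct and is exactly the argument the paper has in mind: the corollary is stated without proof precisely because it follows by taking norm closures in the identity for $\mathcal{R}_{\operatorname{alg}}$ established in Theorem \ref{important action}, together with the definitions of $\mathcal{R}$ and of the reduced crossed product.
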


As expected, $\wG$-equivariant ucp maps induce $\G$-equivariant maps on the associated crossed products. The proof of the following lemma is a triviality.
\begin{lma}
    Let $(X, \alpha)$ and $(Y, \beta)$ be $\wG$-operator systems and $\phi: X \to Y$ a $\wG$-equivariant ucp map. Then the map $\phi \ovot \id: X \ovot B(L^2(\G))\to Y \ovot B(L^2(\G))$ restricts to a $\G$-$W^*$-ucp map $\phi \rtimes_{\mathcal{F}}\wG: X\rtimes_{\alpha, \mathcal{F}}\wG\to X\rtimes_{\beta, \mathcal{F}}\wG$ and a $\G$-$C^*$-ucp map $\phi\rtimes_r \wG: X\rtimes_{\alpha, r}\wG\to X\rtimes_{\beta, r}\wG$ and we have
    $(\phi \otimes \id)(\alpha(x)(1\otimes \lambda(a)) = \beta(\phi(x))(1\otimes \lambda(a))$
    for all $x\in X$ and all $a\in \mathcal{O}(\G)$. 
\end{lma}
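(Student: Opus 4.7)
The plan is to reduce everything to the observation that $1_X \otimes b$ lies in the multiplicative domain of $\phi \ovot \id$ for every $b \in B(L^2(\G))$. This is immediate: by Schwarz, equality in $(\phi \ovot \id)((1\otimes b)^*(1\otimes b)) \ge ((\phi \ovot \id)(1\otimes b))^*((\phi\ovot \id)(1\otimes b))$ forces $1\otimes b$ into the multiplicative domain, and both sides simply equal $1\otimes b^*b$ since $\phi(1)=1$. Consequently $(\phi \ovot \id)(z(1\otimes b)) = (\phi \ovot \id)(z)(1\otimes b)$ for all $z \in X \ovot B(L^2(\G))$.

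Combining this with the $\wG$-equivariance of $\phi$ (which reads $(\phi \ovot \id_{\mathscr{L}^\infty(\wG)})\circ \alpha = \beta \circ \phi$, an identity that survives the inclusion $Y \ovot \mathscr{L}^\infty(\wG)\hookrightarrow Y \ovot B(L^2(\G))$), I would immediately deduce the claimed formula
$$(\phi \ovot \id)(\alpha(x)(1\otimes \lambda(a))) = \beta(\phi(x))(1\otimes \lambda(a)), \quad x \in X, \ a \in \mathcal{O}(\G).$$
By norm continuity of $\phi \ovot \id$ and the definition $X\rtimes_{\alpha,r}\wG = \clspan\{\alpha(x)(1\otimes \lambda(a))\}$, the restriction $\phi\rtimes_r \wG$ takes values in $Y \rtimes_{\beta, r}\wG$, as required.

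For the Fubini crossed product, I would take $z \in X \rtimes_{\alpha, \mathcal{F}} \wG$, so $(\id \ovot \hat{\Delta}_l)(z) = (\alpha \ovot \id)(z)$. Using the standard Fubini slicing machinery (applicable since $\hat{\Delta}_l$ is a normal $*$-homomorphism), I would compute
$$(\id \ovot \hat{\Delta}_l)((\phi \ovot \id)(z)) = (\phi \ovot \id \ovot \id)(\id \ovot \hat{\Delta}_l)(z) = (\phi \ovot \id \ovot \id)(\alpha \ovot \id)(z) = (\beta \ovot \id)((\phi \ovot \id)(z)),$$
where the final equality uses $\wG$-equivariance of $\phi$ once more. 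Hence $(\phi \ovot \id)(z) \in Y \rtimes_{\beta, \mathcal{F}} \wG$. The $\G$-equivariance of both $\phi \rtimes_{\mathcal{F}}\wG$ and $\phi \rtimes_r \wG$ is then an identical piece of Fubini slicing applied to $\Delta$ in the second leg, i.e.\ $(\id \ovot \Delta)(\phi \ovot \id) = ((\phi \rtimes_\bullet \wG) \ovot \id)(\id \ovot \Delta)$. There is no genuine obstacle here beyond careful bookkeeping of Fubini slices; the only substantive ingredient is the multiplicative domain observation in the first paragraph, which is why the authors call the proof a triviality.
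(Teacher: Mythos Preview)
Your argument is essentially correct and captures what the authors have in mind by calling the proof ``a triviality'', but one step deserves a word of caution. The multiplicative domain theorem you invoke (equality in Schwarz forces $1\otimes b$ into the multiplicative domain, hence $(\phi\ovot\id)(z(1\otimes b))=(\phi\ovot\id)(z)(1\otimes b)$) is stated for ucp maps between $C^*$-algebras. Here the domain $X\ovot B(L^2(\G))$ is only an operator system, and the usual proof of Choi's theorem needs products like $z^*z$ to lie in the domain, which they need not. The easy repair is to extend $\phi$ via Arveson to $\tilde{\phi}:B(\H)\to B(\K)$, apply the multiplicative domain argument to the genuine ucp map $\tilde{\phi}\ovot\id$ between von Neumann algebras, and restrict back (this restriction agrees with $\phi\ovot\id$ by the uniqueness clause in the paper's Fubini preliminaries).

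Even cleaner, and more in keeping with the Fubini formalism of Section~2, you can bypass multiplicative domains entirely: for $\chi\in B(L^2(\G))_*$ one has $(\id\ovot\chi)(z(1\otimes b))=(\id\ovot (b\cdot\chi))(z)$ with $(b\cdot\chi)(c)=\chi(cb)$, and since $(\id\ovot\chi)\circ(\phi\ovot\id)=\phi\circ(\id\ovot\chi)$, the identity $(\phi\ovot\id)(z(1\otimes b))=(\phi\ovot\id)(z)(1\otimes b)$ drops out by comparing slices. This is just the statement that $\phi\ovot\id$ acts only on the first leg, and it is presumably the ``triviality'' the authors intend. The rest of your proof --- showing $(\phi\ovot\id)(z)\in Y\rtimes_{\beta,\mathcal{F}}\wG$ via the defining relation and $\wG$-equivariance of $\phi$, and checking $\G$-equivariance by commuting $\phi\ovot\id$ past $\id\ovot\Delta$ --- is correct as written.
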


Besides a canonical $\G$-$W^*$-action, the Fubini crossed product also carries a canonical $\wG$-action:

\begin{prop}\label{action III}
    If $(X, \alpha)$ is a $\wG$-operator system, then the action $\id_X \ovot \hat{\Delta}_r: X \ovot B(L^2(\G))\to X \ovot B(L^2(\G))\ovot \mathscr{L}^\infty(\wG)$ restricts to an action 
    $X \rtimes_{\alpha, \mathcal{F}}\wG \to (X \rtimes_{\alpha, \mathcal{F}}\wG) \ovot \mathscr{L}^\infty(\wG)$.
\end{prop}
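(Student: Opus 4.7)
The plan is to verify three things in order: (i) for $z \in X \rtimes_{\alpha,\mathcal{F}} \wG$, the element $w := (\id_X \ovot \hat{\Delta}_r)(z)$ lies in $(X \rtimes_{\alpha,\mathcal{F}} \wG) \ovot \mathscr{L}^\infty(\wG)$; (ii) the restricted map is uci; (iii) the coaction identity holds. For (i) I would first observe that, since $V_{\wG} = W_\G$ is a unitary in the von Neumann algebra $\mathscr{L}^\infty(\G) \ovot \mathscr{L}^\infty(\wG) \subseteq B(L^2(\G)) \ovot \mathscr{L}^\infty(\wG)$, the automorphism $\hat{\Delta}_r(y) = V_{\wG}(y \otimes 1)V_{\wG}^*$ of $B(L^2(\G) \otimes L^2(\G))$ preserves the von Neumann subalgebra $B(L^2(\G)) \ovot \mathscr{L}^\infty(\wG)$ (it is inner in this subalgebra and carries $B(L^2(\G)) \otimes 1$ into it). Thus $\hat{\Delta}_r$ corestricts to a normal unital $*$-homomorphism $B(L^2(\G)) \to B(L^2(\G)) \ovot \mathscr{L}^\infty(\wG)$, and Lemma \ref{Fubinitensor} then gives $(\id_X \ovot \hat{\Delta}_r)(X \ovot B(L^2(\G))) \subseteq X \ovot B(L^2(\G)) \ovot \mathscr{L}^\infty(\wG)$. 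This already settles the first slice condition in the definition of $(X \rtimes_{\alpha,\mathcal{F}} \wG) \ovot \mathscr{L}^\infty(\wG)$.

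Next I would establish the flip-commutation $(\hat{\Delta}_l \ovot \id) \hat{\Delta}_r = (\id \ovot \hat{\Delta}_r) \hat{\Delta}_l$ on $B(L^2(\G))$: unwinding the definitions, both sides equal $W_{\wG,12}^*\,V_{\wG,23}\,(1 \otimes (\,\cdot\,) \otimes 1)\,V_{\wG,23}^*\,W_{\wG,12}$ and $V_{\wG,23}\,W_{\wG,12}^*\,(1 \otimes (\,\cdot\,) \otimes 1)\,W_{\wG,12}\,V_{\wG,23}^*$ respectively, and these agree because the middle leg of $W_{\wG,12}$ lies in $\mathscr{L}^\infty(\G)'$ while the middle leg of $V_{\wG,23}$ lies in $\mathscr{L}^\infty(\G)$, so $W_{\wG,12}$ and $V_{\wG,23}$ commute. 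Tensoring with $\id_X$ and evaluating at $z$, and using both $(\id \ovot \hat{\Delta}_l)(z) = (\alpha \ovot \id)(z)$ and the trivial commutation of $\alpha$ (on leg $1$) with $\hat{\Delta}_r$ (on leg $3$), one obtains
\[
(\id \ovot \hat{\Delta}_l \ovot \id)(w) = (\alpha \ovot \id \ovot \id)(w).
\]
Slicing any $\chi \in B(L^2(\G))_*$ against the last leg then gives
$(\id \ovot \hat{\Delta}_l)((\id \ovot \id \ovot \chi)(w)) = (\alpha \ovot \id)((\id \ovot \id \ovot \chi)(w))$,
so $(\id \ovot \id \ovot \chi)(w) \in X \rtimes_{\alpha,\mathcal{F}} \wG$. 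Combined with the first step this verifies $w \in (X \rtimes_{\alpha,\mathcal{F}} \wG) \ovot \mathscr{L}^\infty(\wG)$.

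Unitality of $\id_X \ovot \hat{\Delta}_r$ is immediate, and complete isometry follows because $\hat{\Delta}_r$ is an injective normal $*$-homomorphism (injectivity from $V_{\wG}$ being unitary), so its Fubini tensoring with $\id_X$ is uci. The coaction identity for $\beta := \id_X \ovot \hat{\Delta}_r$ reduces, after cancelling $\id_X$, to $(\hat{\Delta}_r \ovot \id)\hat{\Delta}_r = (\id \ovot \hat{\Delta})\hat{\Delta}_r$ on $B(L^2(\G))$, which I would deduce from coassociativity of $\hat{\Delta}_r$ (equivalent to the pentagon equation for $V_{\wG}$) together with the range containment $\hat{\Delta}_r(B(L^2(\G))) \subseteq B(L^2(\G)) \ovot \mathscr{L}^\infty(\wG)$ from step (i) and the identity $\hat{\Delta}_r\vert_{\mathscr{L}^\infty(\wG)} = \hat{\Delta}$. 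The main obstacle will be the bookkeeping in the slicing argument of step (i), in particular correctly matching the relabeling of legs when commuting a slice past $\hat{\Delta}_l$ and $\alpha$; all the other ingredients are either recorded in the preliminaries or follow from standard multiplicative-unitary manipulations.
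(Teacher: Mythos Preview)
Your proposal is correct and follows essentially the same route as the paper: the key step in both is the commutation identity $(\hat{\Delta}_l \ovot \id)\hat{\Delta}_r = (\id \ovot \hat{\Delta}_r)\hat{\Delta}_l$ on $B(L^2(\G))$, after which the slice argument pattern from Theorem \ref{important action} shows that $\id_X \ovot \hat{\Delta}_r$ preserves the Fubini crossed product. Your verification of the commutation (via $W_{\wG,12}$ and $V_{\wG,23}$ commuting because their shared leg lies in $\mathscr{L}^\infty(\G)'$ and $\mathscr{L}^\infty(\G)$ respectively) is in fact slightly more direct than the density-of-$\hat{\lambda}(c_c(\wG))\lambda(\mathcal{O}(\G))$ argument used in the proof of Theorem \ref{important action}, and your explicit checks of the uci and coaction properties fill in details the paper leaves implicit.
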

\begin{proof}
    It suffices to note that the diagram 
    $$
\begin{tikzcd}
                                                                                & B(L^2(\G)) \arrow[rd, "\hat{\Delta}_r"] \arrow[ld, "\hat{\Delta}_l"'] &                                                                               \\
\mathscr{L}^\infty(\wG)\ovot B(L^2(\G)) \arrow[rd, "\id \ovot \hat{\Delta}_r"'] &                                                                       & B(L^2(\G))\ovot \mathscr{L}^\infty(\wG) \arrow[ld, "\hat{\Delta}_l\ovot \id"] \\
                                                                                & \mathscr{L}^\infty(\wG)\ovot B(L^2(\G))\ovot \mathscr{L}^\infty(\wG)  &                                                                              
\end{tikzcd}$$
commutes. As in the proof of Theorem \ref{important action}, this implies that $\id \ovot \hat{\Delta}_r$ restricts to a right $\wG$-action on the Fubini crossed product.
\end{proof}

The Fubini crossed product admits a canonical conditional expectation.

\begin{prop}\label{conditional expectation} The map $E: X \rtimes_{\alpha,\mathcal{F}}\wG\to \alpha(X)$ defined by $E:= \alpha \circ (\id \ovot \omega_{\xi_\G})$ is a faithful ucp conditional expectation on $\alpha(X)$. The following properties are satisfied:
\begin{enumerate}
    \item $E = (\id \ovot \id \ovot \omega_{\xi_\G})\circ (\id \ovot \Delta)$
    \item $E: (X\rtimes_{\alpha, \mathcal{F}}\wG, \id \ovot \Delta) \to (\alpha(X), \tau)$ is $\G$-$W^*$-equivariant, where $\tau: \alpha(X)\to \alpha(X)\ovot \mathscr{L}^\infty(\G)$ is the trivial action on $\alpha(X)$.
    \item If $\G$ is of Kac-type, then $E: (X\rtimes_{\alpha, \mathcal{F}}\wG, \id \ovot \hat{\Delta}_r)\to (\alpha(X), \id \ovot \hat{\Delta})$ is $\wG$-equivariant.
\end{enumerate}
\end{prop}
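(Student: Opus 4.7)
The plan is to derive everything from property (1); faithfulness and the equivariance statements then follow by short computations. First, $E$ is a ucp map with image in $\alpha(X)$ because the slice $\id\ovot\omega_{\xi_\G}$ sends $X\ovot B(L^2(\G))$ to $X$ and $\alpha$ is ucp. For the conditional-expectation property, $E(\alpha(x))=\alpha((\id\ovot\omega_{\xi_\G})\alpha(x))=\alpha((\id\ovot\hat\epsilon)\alpha(x))=\alpha(x)$, using $\omega_{\xi_\G}|_{\mathscr{L}^\infty(\wG)}=\hat\epsilon$ and the identity $(\id\ovot\hat\epsilon)\alpha=\id$.

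The key step is (1). Set $\phi:=(\id\ovot\omega_{\xi_\G})\Delta:B(L^2(\G))\to B(L^2(\G))$ and write $F:=\id_X\ovot\phi$, so that $F(z)=(\id\ovot\id\ovot\omega_{\xi_\G})(\id\ovot\Delta)(z)$. I would first show $F(z)\in\alpha(X)$ for every $z\in X\rtimes_{\alpha,\mathcal F}\wG$. By Theorem \ref{important action}, $(\id\ovot\Delta)(z)\in(X\rtimes_{\alpha,\mathcal F}\wG)\ovot\mathscr{L}^\infty(\G)$, so $F(z)\in X\rtimes_{\alpha,\mathcal F}\wG$. Combining coassociativity of $\Delta$ with right invariance $(\id\ovot\omega_{\xi_\G})\Delta(y)=\omega_{\xi_\G}(y)1$ on $\mathscr{L}^\infty(\G)$ yields $\Delta\circ\phi=\phi(-)\otimes 1$, hence $(\id\ovot\Delta)(F(z))=F(z)\otimes 1$, so $F(z)$ is a fixed point. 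By Theorem \ref{important action} again, the fixed points on $X\rtimes_{\alpha,\mathcal F}\wG$ equal $\alpha(X)$, hence $F(z)\in\alpha(X)$. Writing $F(z)=\alpha(u)$, we have $u=(\id\ovot\hat\epsilon)F(z)=(\id\ovot\omega_{\xi_\G}\circ\phi)(z)$. The direct computation $V_\G(\xi_\G\otimes\xi_\G)=\xi_\G\otimes\xi_\G$ (apply the definition of $V_\G$ with $a=b=1$) gives $\omega_{\xi_\G}\circ\phi=\omega_{\xi_\G}$, so $u=(\id\ovot\omega_{\xi_\G})(z)$ and $F(z)=E(z)$.

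Faithfulness is immediate from (1): if $v\ge 0$ in $X\rtimes_{\alpha,\mathcal F}\wG$ satisfies $E(v)=0$, then positivity of $(\id\ovot\Delta)(v)$ and faithfulness of $\omega_{\xi_\G}$ on $\mathscr{L}^\infty(\G)$ give $(\id\ovot\Delta)(v)=0$, and injectivity of $\id\ovot\Delta$ forces $v=0$. Property (2) is essentially a repeat of the fixed-point argument: $(E\ovot\id)(\id\ovot\Delta)(z)=(\id\ovot(\phi\ovot\id)\Delta)(z)=(\id\ovot\phi(-)\otimes 1)(z)=F(z)\otimes 1=E(z)\otimes 1$, using coassociativity plus left invariance of the Haar state.

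For (3), I would use the alternative formula $E(z)=(\id\ovot(\id\ovot\omega_{\xi_\G})\hat\Delta_l)(z)$, obtained by applying $(\id\ovot\id\ovot\omega_{\xi_\G})$ to $(\alpha\ovot\id)(z)=(\id\ovot\hat\Delta_l)(z)$. In the Kac case, the identity \eqref{rem:Kacidentity} rewrites this as $E=\id_X\ovot Q$ with $Q:=(\omega_{\xi_\G}\ovot\id)\hat\Delta_r$. Because $\hat\Delta_l$ takes values in $\mathscr{L}^\infty(\wG)\ovot B(L^2(\G))$ (since $W_{\wG}\in\mathscr{L}^\infty(\wG)\ovot\mathscr{L}^\infty(\G)'$), $Q$ is $\mathscr{L}^\infty(\wG)$-valued, and coassociativity of $\hat\Delta_r$ (pentagon equation for $V_{\wG}$) yields $(Q\ovot\id)\hat\Delta_r=\hat\Delta_r\circ Q=\hat\Delta\circ Q$, which is precisely the desired $\wG$-equivariance. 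The main obstacle is the fixed-point argument for (1); once $F(z)\in\alpha(X)$ is in place, the rest is formal manipulation.
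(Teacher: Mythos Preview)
Your proof is correct and follows essentially the same route as the paper: both prove (1) via the fixed-point argument (showing $(\id\ovot\id\ovot\omega_{\xi_\G})(\id\ovot\Delta)(z)$ lies in $\operatorname{Fix}(X\rtimes_{\alpha,\mathcal{F}}\wG,\id\ovot\Delta)=\alpha(X)$ using Theorem~\ref{important action}, then identifying the preimage under $\alpha$), deduce faithfulness from (1), and obtain (2) and (3) by short invariance computations. The only cosmetic differences are that you package the maps as $\phi$ and $Q$ and verify the conditional-expectation property via $\omega_{\xi_\G}\vert_{\mathscr{L}^\infty(\wG)}=\hat\epsilon$ directly, whereas the paper routes that step through the coaction identity; for (3) the paper shows $\wG$-equivariance of $\id\ovot\omega_{\xi_\G}$ and then composes with $\alpha$, which is the same computation as your $(Q\ovot\id)\hat\Delta_r=\hat\Delta\circ Q$ unwound.
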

\begin{proof} We calculate for $x\in X$:
\begin{align*}
    E(\alpha(x)) = \alpha((\id \ovot \omega_{\xi_\G})\alpha(x))= (\id \ovot \id \ovot \omega_{\xi_\G})(\alpha \ovot \id)\alpha(x)= (\id \ovot \id \ovot \omega_{\xi_\G})(\id \ovot \hat{\Delta})(\alpha(x))= \alpha(x)
\end{align*}
so $E$ is a ucp conditional expectation onto $\alpha(X)$.

If $z\in X\rtimes_{\alpha, \mathcal{F}} \wG$, then $(\id \ovot \Delta)(z)\in (X\rtimes_{\alpha, \mathcal{F}}\wG)\ovot \mathscr{L}^\infty(\G)$ so that
    \begin{align*}
        (\id \ovot \Delta) (\id \ovot \id \ovot \omega_{\xi_\G})(\id \ovot \Delta)(z)&= (\id \ovot \id \ovot \id \ovot \omega_{\xi_\G})(\id \ovot \Delta \ovot \id)(\id \ovot \Delta)(z)\\
        &= (\id \ovot \id \ovot \id \ovot \omega_{\xi_\G})(\id \ovot \id \ovot \Delta)(\id \ovot \Delta)(z)\\
        &= (\id \ovot \id \ovot \omega_{\xi_\G})(\id \ovot \Delta)(z)\otimes 1.
    \end{align*}
    It follows that $(\id \ovot \id \ovot \omega_{\xi_\G})(\id \ovot \Delta)(z) \in \operatorname{Fix}(X\rtimes_{\alpha, \mathcal{F}} \wG, \id \ovot \Delta) = \alpha(X)$. Therefore, there is $x\in X$ with $(\id \ovot \id \ovot \omega_{\xi_\G})(\id \ovot \Delta)(z) = \alpha(x)$. Applying $\id \ovot \omega_{\xi_\G}$ to this expression, we get $(\id \ovot \omega_{\xi_\G})(z) = x$. Therefore, 
\begin{align*}
    E(z)= \alpha((\id \ovot \omega_{\xi_\G})(z)) = (\id \ovot \id \ovot \omega_{\xi_\G})(\id \ovot \Delta)(z).
\end{align*}
Since $\omega_{\xi_\G}$ is faithful on $\mathscr{L}^\infty(\G)$, it follows that $E$ is faithful as well. Note that if $a,b\in \mathcal{O}(\G)$ and $x\in B(L^2(\G))$, we have
\begin{align*}
    \omega_{\Lambda(a), \Lambda(b)}((\omega_{\xi_\G}\ovot \id)\Delta(x)) = \langle V_\G^*(\Lambda(1)\otimes \Lambda(b)), (x\otimes 1)V_\G^*(\Lambda(1)\otimes \Lambda(b))\rangle = \omega_{\Lambda(a), \Lambda(b)}(\omega_{\xi_\G}(x)1) \ ,
\end{align*}
so it follows that $(\omega_{\xi_\G}\ovot \id)\Delta(x)= \omega_{\xi_\G}(x)1.$ Therefore, 
\begin{align*}
    (E\ovot \id)(\id \ovot \Delta)(z)&= (\alpha \ovot \id)(\id \ovot \omega_{\xi_\G}\ovot \id)(\id \ovot \Delta)(z)= (\alpha \ovot \id)((\id \ovot \omega_{\xi_\G})(z)\otimes 1) = E(z)\otimes 1.
\end{align*}
Finally, if $\G$ is of Kac-type, then to conclude $\wG$-equivariance of $E$, it suffices to show that the map
$$(X \rtimes_{\alpha, \mathcal{F}}\wG, \id \ovot \hat{\Delta}_r) \to (X, \alpha): z \mapsto (\id \ovot \omega_{\xi_\G})(z)$$ is $\wG$-equivariant. Using the identity \eqref{rem:Kacidentity}, we see that
\begin{align*}
    (\id \ovot \omega_{\xi_\G}\ovot \id)(\id \ovot \hat{\Delta}_r)(z) &= (\id \ovot \id \ovot \omega_{\xi_\G})(\id \ovot \hat{\Delta}_l)(z)\\
    &= (\id \ovot \id \ovot \omega_{\xi_\G})(\alpha \ovot \id)(z)= \alpha((\id \ovot \omega_{\xi_\G})(z)) \ ,
\end{align*}
and the proposition is proven.
\end{proof}

\begin{cor}\label{yeeha}
    If $X\rtimes_{\alpha,r} \wG$ is $\G$-$C^*$-injective, then $X$ is injective as an operator system. If $X\rtimes_{\alpha, \mathcal{F}} \wG$ is $\G$-$W^*$-injective, then $X$ is injective as an operator system.
\end{cor}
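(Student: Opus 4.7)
The plan is to exploit the canonical ucp conditional expectation $E : X\rtimes_{\alpha,\mathcal{F}}\wG \to \alpha(X)$ from Proposition \ref{conditional expectation}, together with the fact that $\alpha(X)\subseteq X\rtimes_{\alpha,r}\wG \subseteq X\rtimes_{\alpha,\mathcal{F}}\wG$ (the first inclusion since $\alpha(x) = \alpha(x)(1\otimes \lambda(1))$). Thus $E$ restricts to a ucp conditional expectation $E_r : X\rtimes_{\alpha,r}\wG\to \alpha(X)$, and by item (2) of Proposition \ref{conditional expectation} both $E$ and $E_r$ are $\G$-equivariant when $\alpha(X)$ is equipped with the trivial action $\tau$. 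Since $\alpha: X\to \alpha(X)$ is a unital complete isometry, it is a complete order isomorphism, so $X$ and $\alpha(X)$ are indistinguishable as abstract operator systems.

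For the $\G$-$W^*$-case, Proposition \ref{G-injectivity characterisation} immediately gives that $X\rtimes_{\alpha,\mathcal{F}}\wG$ is injective as an operator system. Then $\alpha(X)$, being the image of a ucp conditional expectation from an injective operator system, is itself injective; transporting this through $\alpha^{-1}$ yields the injectivity of $X$.

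For the $\G$-$C^*$-case, $E_r$ realises $(\alpha(X),\tau)\cong (X,\tau)$ as a $\G$-$C^*$-equivariant retract of the $\G$-$C^*$-injective system $X\rtimes_{\alpha,r}\wG$, so $(X,\tau)$ is $\G$-$C^*$-injective. The subtle point -- and the only genuine obstacle -- is the passage from $\G$-$C^*$-injectivity of $(X,\tau)$ to operator system injectivity of $X$, since in general $\G$-$C^*$-injectivity is strictly weaker than operator system injectivity (see Remark \ref{bad}). For the trivial action, however, this implication is essentially tautological: given an operator system ucp map $\varphi : Y\to X$ and a uci $\iota : Y\to Z$, one endows $Y$ and $Z$ with the trivial $\G$-action (both the coaction property and Podleś density are immediate for $\tau$), so that $\varphi$ and $\iota$ become $\G$-$C^*$-equivariant; $\G$-$C^*$-injectivity of $(X,\tau)$ then yields a $\G$-$C^*$-equivariant, hence in particular ucp, extension $Z\to X$. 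This concludes the argument.
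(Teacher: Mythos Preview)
Your proof is correct and follows essentially the same route as the paper: use the $\G$-equivariant conditional expectation $E$ from Proposition \ref{conditional expectation} to exhibit $(\alpha(X),\tau)\cong (X,\tau)$ as an equivariant retract of the relevant crossed product, and then pass from equivariant injectivity of $(X,\tau)$ to ordinary injectivity of $X$. The paper handles the $\G$-$W^*$-case by invoking Proposition \ref{G-injectivity characterisation} directly (rather than your detour through injectivity of the Fubini crossed product plus the conditional expectation), and for the $\G$-$C^*$-case simply writes ``proven in the same way''; your explicit argument that $\G$-$C^*$-injectivity of $(X,\tau)$ forces operator system injectivity --- by equipping the test diagram with trivial actions --- is exactly the step the paper leaves implicit, and it is good that you flagged and verified it in light of Remark \ref{bad}.
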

\begin{proof}
    By Lemma \ref{conditional expectation}, there is a $\G$-$W^*$-equivariant conditional expectation $E: (X\rtimes_{\alpha, \mathcal{F}} \wG, \id \ovot \Delta)\to (\alpha(X), \tau)\cong (X, \tau)$. It follows that $(X, \tau)$ is $\G$-$W^*$-injective, i.e. $X$ is injective as an operator system. The other claim is proven in the same way.
\end{proof}

\section{Equivariant injectivity of crossed products}

In this section, we start from a $\wG$-operator system $(X, \alpha)$ and show that $\wG$-injectivity of this operator system is equivalent with $\G$-injectivity of the associated crossed products. As an application, we give a characterisation of amenability of the DQG $\wG$ in terms of equivariant injectivity of associated operator algebras.

The following crucial lemma ensures that certain maps are automatically $\wG$-equivariant:

\begin{lma} \label{multiplicative domain argument} Let $X\subseteq B(\H)$ be an operator system and $\psi: X \ovot B(L^2(\G))\to X \ovot B(L^2(\G))$ a ucp map such that $\psi(1\otimes \lambda(a)) = 1\otimes \lambda(a)$ for all $a\in \mathcal{O}(\G)$. Then $\psi$ automatically preserves the right coaction $\id_X \ovot \hat{\Delta}_r: X \ovot B(L^2(\G)) \to X \ovot B(L^2(\G))\ovot \mathscr{L}^\infty(\wG)$.
\end{lma}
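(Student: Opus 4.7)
The strategy is to leverage Choi's theorem on multiplicative domains, applied in two stages. The goal is to show that the unitary $V_{\wG,23} := 1_{X} \otimes V_{\wG} \in X \ovot B(L^2(\G)) \ovot B(L^2(\G))$ lies in the multiplicative domain of the ucp map $\psi \ovot \id$. Once this is established, the identity $(\id_X \ovot \hat{\Delta}_r)(z) = V_{\wG,23}(z \otimes 1)V_{\wG,23}^*$ together with the bimodule behaviour of elements in the multiplicative domain yields
$$(\psi \ovot \id)\bigl(V_{\wG,23}(z \otimes 1)V_{\wG,23}^*\bigr) = V_{\wG,23}(\psi(z) \otimes 1)V_{\wG,23}^* = (\id_X \ovot \hat{\Delta}_r)\psi(z),$$
which is precisely the desired equivariance.

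First, by norm continuity, the hypothesis $\psi(1 \otimes \lambda(a)) = 1 \otimes \lambda(a)$ for all $a \in \mathcal{O}(\G)$ extends to $\psi(1 \otimes c) = 1 \otimes c$ for all $c \in C_r(\G)$, and Choi's theorem places $1 \otimes C_r(\G)$ inside the multiplicative domain of $\psi$. Second, since $V_{\wG,23}$ is unitary, it suffices (by the Schwarz inequality applied to $V_{\wG,23}^* V_{\wG,23} = 1 = V_{\wG,23} V_{\wG,23}^*$) to verify the single identity $(\psi \ovot \id)(V_{\wG,23}) = V_{\wG,23}$ in order to put $V_{\wG,23}$ into the multiplicative domain of $\psi \ovot \id$. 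By the Fubini characterisation of equality, this further reduces to checking, for every $\omega \in B(L^2(\G))_*$, that the third-leg slices agree. Using the Fubini property $(\id \ovot \id \ovot \omega) \circ (\psi \ovot \id) = \psi \circ (\id \ovot \id \ovot \omega)$, this amounts to the single requirement
$$\psi\bigl(1 \otimes (\id \otimes \omega)(V_{\wG})\bigr) = 1 \otimes (\id \otimes \omega)(V_{\wG}).$$

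The key technical point is therefore to verify that $(\id \otimes \omega)(V_{\wG}) \in C_r(\G)$ for every such $\omega$. Because $V_{\wG} \in \mathscr{L}^\infty(\G) \ovot \mathscr{L}^\infty(\wG)$ one may restrict $\omega$ to $\mathscr{L}^\infty(\wG) \cong \prod_{\pi \in \Irr(\G)} B(\H_\pi)$. For $\omega$ supported on finitely many blocks $p_\pi$, the Peter--Weyl decomposition of the multiplicative unitary gives $V_{\wG}(1 \otimes p_\pi) \in \lambda(\mathcal{O}(\G)) \odot B(\H_\pi)$, a finite sum of elementary tensors whose first leg lies in $\lambda(\mathcal{O}(\G))$; hence $(\id \otimes \omega)(V_{\wG}) \in \lambda(\mathcal{O}(\G)) \subseteq C_r(\G)$. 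Arbitrary $\omega \in \mathscr{L}^\infty(\wG)_*$ are approximated in norm by finite-support functionals, and the conclusion follows by the bound $\|(\id \otimes (\omega - \omega_n))(V_{\wG})\| \le \|\omega - \omega_n\|$. Combined with the first step, this verifies that $\psi$ fixes the required slices.

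The principal obstacle is precisely this last structural check: without it, one could only conclude that $\psi$ fixes $1 \otimes \mathscr{L}^\infty(\G)$-valued slices of $V_{\wG}$, which does \emph{not} follow from the hypotheses since $\psi$ need not be $\sigma$-weakly continuous. That the slice maps of the multiplicative unitary land in the $C^*$-algebra $C_r(\G)$ (equivalently, the blockwise decomposition of $V_{\wG} = W_\G$ into matrix coefficients of unitary representations) is the essential input beyond Choi's theorem.
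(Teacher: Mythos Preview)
Your argument is correct and is precisely the content of \cite[Lemma 6.2]{KKSV} that the paper cites; the paper's own proof is nothing more than ``extend by Arveson, then quote that lemma.'' One small technical point: Choi's multiplicative-domain theorem is stated for ucp maps between $C^*$-algebras, whereas your $\psi$ and $\psi\ovot\id$ have only the operator system $X\ovot B(L^2(\G))$ (resp.\ $X\ovot B(L^2(\G))\ovot B(L^2(\G))$) as domain, so the bimodule identity $\psi(uz)=\psi(u)\psi(z)$ does not follow directly from the Schwarz inequality alone (the usual $2\times 2$ trick needs $z^*z$ in the domain). The paper's first sentence --- extend $\psi$ to $\widetilde{\psi}: B(\H\otimes L^2(\G))\to B(\H\otimes L^2(\G))$ via Arveson --- is exactly what closes this gap: now Choi applies to $\widetilde{\psi}$ and $\widetilde{\psi}\ovot\id$, and restricting back gives your conclusion. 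With that one-line addition your proof is complete and identical in spirit to the paper's.
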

\begin{proof} By Arveson's extension theorem, $\psi$ extends to a ucp map $$\widetilde{\psi}: B(\H \otimes L^2(\G)) \to B(\H \otimes L^2(\G)).$$ 
    By the argument in \cite[Lemma 6.2]{KKSV}, $\widetilde{\psi}$ preserves the coaction $\id_{B(\H)}\ovot \hat{\Delta}_r$ from which the lemma immediately follows.
\end{proof}

We now come to the main results of this paper. Similar results were obtained in the unpublished preprint \cite[Theorem 7.5 \& Corollary 7.7]{MM} in the context of von Neumann algebras, using a non-trivial notion of braided tensor product of von Neumann algebras. The Kac-type case in the following theorem generalises the original result \cite[II, Lemma 3.1]{Ham3}.

\begin{thm}\label{important}
    Let $(X, \alpha)$ be a $\wG$-operator system. The existence of the following maps are equivalent:
    \begin{enumerate}
        \item A $\wG$-equivariant ucp conditional expectation $(X \ovot \mathscr{L}^\infty(\wG), \id \ovot \hat{\Delta})\to (\alpha(X), \id \ovot \hat{\Delta})$.
        \item A $\wG$-equivariant ucp conditional expectation $(X\ovot B(L^2(\G)), \id \ovot \hat{\Delta}_r) \to (\alpha(X), \id \ovot \hat{\Delta})$.
        \item A $\G$-$W^*$-equivariant ucp conditional expectation $(X \ovot B(L^2(\G)), \id \ovot \Delta)\to (X\rtimes_{\alpha, \mathcal{F}} \wG, \id \ovot \Delta).$
        \item  A $\G$-$C^*$-equivariant ucp conditional expectation $\mathcal{R}(X \ovot B(L^2(\G)), \id \ovot \Delta)\to (X\rtimes_{\alpha,r}\wG, \id \ovot \Delta).$
    \end{enumerate}
    If $\G$ is of Kac-type, the existence of these maps is equivalent with the existence of:
    \begin{enumerate}  \setcounter{enumi}{4}
        \item A ucp conditional expectation $X\ovot B(L^2(\G))\to X\rtimes_{\alpha, \mathcal{F}} \wG.$
    \end{enumerate}
\end{thm}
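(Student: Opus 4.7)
The plan is to establish the equivalences in a cycle $(1)\Leftrightarrow (2)$, $(3)\Rightarrow (1)$, $(1)\Rightarrow (4)$, $(3)\Leftrightarrow (4)$, and then handle $(3)\Leftrightarrow (5)$ separately in the Kac case. The two main engines are (a) the $\wG$-equivariant ucp conditional expectation $F_0 := (\omega_{\xi_\G} \ovot \id) \circ \hat{\Delta}_r : B(L^2(\G)) \to \mathscr{L}^\infty(\wG)$, which bridges $\mathscr{L}^\infty(\wG)$ and $B(L^2(\G))$, and (b) the spectral decomposition of Proposition \ref{important action II}, which translates conditional expectations on $X \ovot \mathscr{L}^\infty(\wG)$ into conditional expectations on the crossed products.

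For $(1) \Leftrightarrow (2)$, the map $F_0$ is a conditional expectation because $\omega_{\xi_\G}|_{\mathscr{L}^\infty(\wG)} = \hat{\epsilon}$, and is $\wG$-equivariant by coassociativity of $\hat{\Delta}_r$; then $(2) \Rightarrow (1)$ is restriction to $X \ovot \mathscr{L}^\infty(\wG)$, while $(1) \Rightarrow (2)$ is pre-composition with $\id_X \ovot F_0$. For $(3) \Rightarrow (1)$: restrict $\Phi$ to the $\G$-$W^*$-fixed points, which by Proposition \ref{important action II} and Theorem \ref{important action} are $X \ovot \mathscr{L}^\infty(\wG)$ and $\alpha(X)$; $\G$-$W^*$-equivariance then yields a ucp conditional expectation $X \ovot \mathscr{L}^\infty(\wG) \to \alpha(X)$. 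Since $\Phi$ fixes $1 \otimes \lambda(a) \in X \rtimes_{\alpha, \mathcal{F}} \wG$, Lemma \ref{multiplicative domain argument} gives automatic $\wG$-equivariance of $\Phi$ with respect to $\hat{\Delta}_r$, which restricts to $\hat{\Delta}$-equivariance on $X \ovot \mathscr{L}^\infty(\wG)$, yielding the map in $(1)$.

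For $(1) \Rightarrow (4)$, use the unique spectral decomposition from Proposition \ref{important action II}: every $z \in \mathcal{R}_{\mathrm{alg}}(X \ovot B(L^2(\G)), \id \ovot \Delta) = (X \ovot \mathscr{L}^\infty(\wG))(1 \otimes \mathcal{O}(\G))$ has a unique form $z = \sum_{\pi, i, j} z_{ij}^\pi (1 \otimes u_{ij}^\pi)$ with $z_{ij}^\pi \in X \ovot \mathscr{L}^\infty(\wG)$. Define $\tilde E(z) := \sum \varphi_1(z_{ij}^\pi)(1 \otimes u_{ij}^\pi) \in \alpha(X)(1 \otimes \mathcal{O}(\G)) \subseteq X \rtimes_{\alpha, r}\wG$; the identity $\Delta(u_{ij}^\pi) = \sum_k u_{ik}^\pi \otimes u_{kj}^\pi$ gives $\G$-$C^*$-equivariance directly, and complete positivity follows by realizing $\tilde E$ as a composition of the ucp spectral projections $E_\pi$ (from Proposition \ref{algebraic core}), the ucp map $\varphi_1$, and the bounded reassembly map. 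Extend $\tilde E$ from $\mathcal{R}_{\mathrm{alg}}$ to $\mathcal{R}$ by norm-continuity. For $(3)\Leftrightarrow (4)$: $(3)\Rightarrow (4)$ is restriction, since $\G$-$W^*$-equivariance sends regular elements to regular elements and the regular elements of $X\rtimes_{\alpha,\mathcal{F}}\wG$ coincide with $X\rtimes_{\alpha,r}\wG$ by the corollary. For $(4)\Rightarrow (3)$, Arveson-extend $\tilde E$ to a ucp map $\hat E : X \ovot B(L^2(\G)) \to B(\H \otimes L^2(\G))$; $\hat E$ fixes both $1 \otimes \lambda(a)$ and the fixed-point subsystem $X \ovot \mathscr{L}^\infty(\wG) \subseteq \mathcal{R}$, so that Lemma \ref{multiplicative domain argument} and its $\G$-$W^*$-analogue (proved by the same multiplicative-domain technique applied to the $\G$-fixed points) together yield both $\wG$- and $\G$-$W^*$-equivariance of $\hat E$, forcing the image to lie in $X\rtimes_{\alpha,\mathcal{F}}\wG$.

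In the Kac case, $(3) \Rightarrow (5)$ is immediate. For $(5) \Rightarrow (3)$: given a ucp conditional expectation $E : X \ovot B(L^2(\G)) \to X \rtimes_{\alpha, \mathcal{F}} \wG$, Lemma \ref{multiplicative domain argument} again supplies $\wG$-equivariance with respect to $\hat{\Delta}_r$; composition with the Kac-case $\wG$-equivariant conditional expectation $\mathcal{E}$ from Proposition \ref{conditional expectation}(3) (whose equivariance hinges on the Kac identity \eqref{rem:Kacidentity}) yields a $\wG$-equivariant ucp conditional expectation $X \ovot B(L^2(\G)) \to \alpha(X)$, i.e.\ the map in $(2)$, which is equivalent to $(3)$ by the general cycle. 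The principal obstacles are the complete-positivity verification of $\tilde E$ in $(1)\Rightarrow (4)$ and the delicate assertion in $(4)\Rightarrow (3)$ that the Arveson extension actually takes values in $X\rtimes_{\alpha,\mathcal{F}}\wG$ rather than the ambient $B(\H\otimes L^2(\G))$; the latter requires formulating and proving the $\G$-$W^*$-analogue of Lemma \ref{multiplicative domain argument}, which is where the interplay between $\hat{\Delta}_l$ (in the definition of the Fubini crossed product) and the given $\G$-$W^*$-coaction $\id \ovot \Delta$ must be handled carefully.
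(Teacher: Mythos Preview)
Your treatment of $(1)\Leftrightarrow(2)$, $(3)\Rightarrow(1)$, and the Kac implication $(5)\Rightarrow(2)$ is correct and matches the paper. The difficulties you flag at the end are real, however, and as stated they are genuine gaps rather than routine verifications.

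\textbf{The gap in $(1)\Rightarrow(4)$.} Your justification of complete positivity of $\tilde E$ does not work: the spectral projectors $E_\pi$ from Proposition~\ref{algebraic core} are not ucp for $\pi$ nontrivial, and there is no ``bounded reassembly map'' that is ucp. The paper resolves this by writing down a closed formula instead of the spectral recipe: with $\phi:=\alpha^{-1}\circ\varphi_1:X\ovot\mathscr{L}^\infty(\wG)\to X$, set
\[
\psi:=(\phi\ovot\id)\circ(\id\ovot\hat\Delta_l):X\ovot B(L^2(\G))\to X\ovot B(L^2(\G)).
\]
This is manifestly ucp as a composition of ucp maps. A short computation (using $\hat\Delta_l(\lambda(a))=1\otimes\lambda(a)$, $\hat\Delta_l\vert_{\mathscr{L}^\infty(\wG)}=\hat\Delta$, and the $\wG$-equivariance of $\varphi_1$) shows that on $z=\sum z_{ij}^\pi(1\otimes u_{ij}^\pi)\in\mathcal{R}_{\mathrm{alg}}$ one has $\psi(z)=\sum\varphi_1(z_{ij}^\pi)(1\otimes u_{ij}^\pi)$, i.e.\ $\psi$ \emph{is} your $\tilde E$. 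So your map is correct; only the cp argument was missing, and the closed formula supplies it.

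\textbf{The gap in $(4)\Rightarrow(3)$.} Two problems. First, the Arveson extension $\hat E$ does \emph{not} fix $X\ovot\mathscr{L}^\infty(\wG)$: on that subspace $\tilde E$ equals the conditional expectation onto $\alpha(X)$, not the identity. Second, the hoped-for ``$\G$-$W^*$-analogue of Lemma~\ref{multiplicative domain argument}'' is not available by the same mechanism: $\Delta$ is implemented by $V_\G\in\mathscr{L}^\infty(\wG)'\ovot\mathscr{L}^\infty(\G)$, so the multiplicative-domain trick would require $\hat E$ to fix $1\otimes\mathscr{L}^\infty(\wG)'$, and these elements are not in $X\rtimes_{\alpha,r}\wG$. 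The paper simply avoids this implication. It proves $(1)\Rightarrow(3)$ directly from the same $\psi$ above: one checks $(\alpha\ovot\id)\psi=(\id\ovot\hat\Delta_l)\psi$ using coassociativity of $\hat\Delta_l$, so $\psi$ lands in $X\rtimes_{\alpha,\mathcal{F}}\wG$, and $\G$-$W^*$-equivariance follows from the relation $(\id\ovot\Delta)\hat\Delta_l=(\hat\Delta_l\ovot\id)\Delta$ established in Theorem~\ref{important action}. For the reverse, the paper does $(4)\Rightarrow(1)$ (not $(4)\Rightarrow(3)$): Arveson-extend, apply Lemma~\ref{multiplicative domain argument} to get $\hat\Delta_r$-equivariance, and then restrict to $\G$-fixed points to obtain the $\wG$-equivariant conditional expectation $X\ovot\mathscr{L}^\infty(\wG)\to\alpha(X)$.

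In short: replace your spectral definition of $\tilde E$ by the formula $(\phi\ovot\id)(\id\ovot\hat\Delta_l)$, and replace the detour $(1)\Rightarrow(4)\Rightarrow(3)$ by the direct $(1)\Rightarrow(3)$ that this formula provides; then the cycle closes without the unproved analogue.
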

\begin{proof} We will assume that $X\subseteq B(\H)$. 

$(1)\iff (2)$ It suffices to note that the map
$$B(L^2(\G))\to \mathscr{L}^\infty(\wG): x\mapsto (\omega_{\xi_\G}\ovot \id)(\hat{\Delta}_r(x))$$
is a ucp $\wG$-equivariant conditional expectation.

    $(1)\implies (3)+(4)$ By assumption, there exists an equivariant ucp map $\phi: (X \ovot \mathscr{L}^\infty(\wG), \id \ovot \hat{\Delta})\to (X, \alpha)$ such that $\phi\circ \alpha= \id_X$. Define
    $$\psi:= (\phi \ovot \id)\circ (\id \ovot \hat{\Delta}_l): X \ovot B(L^2(\G)) \to X \ovot B(L^2(\G)).$$

    If $z \in X\rtimes_{\alpha,\mathcal{F}} \wG$, then
    $$\psi(z) = (\phi \ovot \id)(\id \ovot \hat{\Delta}_l)(z)=(\phi\ovot \id)(\alpha \ovot \id)(z) = z.$$
    Moreover, if $z\in X \ovot B(L^2(\G))$, then
    \begin{align*}
        (\alpha \ovot \id)(\psi(z))&= (\alpha \ovot \id)(\phi \ovot \id)(\id \ovot \hat{\Delta}_l)(z)\\
        &= (\phi \ovot \id \ovot \id)(\id \ovot \hat{\Delta}\ovot \id)(\id \ovot \hat{\Delta}_l)(z)\\
        &= (\phi \ovot \id \ovot \id)(\id \ovot \id \ovot \hat{\Delta}_l)(\id \ovot \hat{\Delta}_l)(z)\\
        &= (\id \ovot \hat{\Delta}_l)(\phi \ovot \id)(\id \ovot \hat{\Delta})(z)\\
        &= (\id \ovot \hat{\Delta}_l)(\psi(z)) \ ,
    \end{align*}
    so that $\psi(z)\in X\rtimes_{\alpha,\mathcal{F}} \wG$. It follows that $\psi: X \ovot B(L^2(\G))\to X\rtimes_{\alpha, \mathcal{F}} \wG$ is a 
 ucp conditional expectation.
 We claim that $\psi$ is $\G$-$W^*$-equivariant. Indeed, since $(\id \ovot \Delta)\hat{\Delta}_l= (\hat{\Delta}_l\ovot \id)\Delta$, as shown in the proof of Theorem \ref{important action}, we can calculate for $z\in X \ovot B(L^2(\G))$ that
    \begin{align*}
        (\id \ovot \Delta)(\psi(z)) &= (\id \ovot \Delta)(\phi \ovot \id)(\id \ovot \hat{\Delta}_l)(z)\\
        &=(\phi \ovot \id \ovot \id)(\id \ovot \id \ovot \Delta)(\id \ovot \hat{\Delta}_l)(z)\\
        &= (\phi \ovot \id \ovot \id)(\id \ovot \hat{\Delta}_l\ovot \id)(\id \ovot \Delta)(z)\\
        &= (\psi \ovot \id)(\id \ovot \Delta)(z).
    \end{align*}

     Moreover, since $\psi$ is $\G$-$W^*$-equivariant, it restricts to a $\G$-$C^*$-equivariant ucp conditional expectation
    $$\mathcal{R}(X \ovot B(L^2(\G)))\to \mathcal{R}(X\rtimes_{\alpha,\mathcal{F}} \wG) = X\rtimes_{\alpha,r} \wG.$$

    $(3)\implies (1)$ Let a $\G$-$W^*$-equivariant conditional expectation $P: X \ovot B(L^2(\G))\to X \rtimes_{\alpha, \mathcal{F}}\wG$ be given. By Lemma \ref{multiplicative domain argument}, $P$ automatically preserves the coaction $\id_X \ovot \hat{\Delta}_r$. Since $P$ preserves the $\id \ovot \Delta$-coaction, it restricts to a ucp conditional expectation 
    $$Q: \operatorname{Fix}(X \ovot B(L^2(\G)), \id \ovot \Delta) = X \ovot \mathscr{L}^\infty(\wG) \to \operatorname{Fix}(X\rtimes_{\alpha,\mathcal{F}} \wG, \id \ovot \Delta)= \alpha(X)$$
    which clearly preserves the coaction $\id \ovot \hat{\Delta}$.

    $(4)\implies (1)$ Let a $\G$-$C^*$-equivariant ucp conditional expectation
    $$P: (\mathcal{R}(X \ovot B(L^2(\G))), \id \ovot \Delta)\to (X \rtimes_{\alpha,r} \wG, \id \ovot \Delta)$$ be given. By Arveson's extension theorem and Lemma \ref{multiplicative domain argument}, we see that $P$ has a ucp $\wG$-equivariant extension 
    $$\widetilde{P}: (B(\H) \ovot B(L^2(\G)), \id \ovot \hat{\Delta}_r)\to (B(\H) \ovot B(L^2(\G)), \id \ovot \hat{\Delta}_r).$$
    Since $P$ preserves the coaction $\id \ovot \Delta$, it restricts to a ucp conditional expectation 
    $$Q: X \ovot \mathscr{L}^\infty(\wG) \to \alpha(X) \ ,$$
    which is $\wG$-equivariant since $\widetilde{P}$ is $\wG$-equivariant.

    $(5)\implies (1)$ (Under the additional assumption that $\G$ is of Kac type.) By assumption, there exists a ucp conditional expectation 
    $$P: X \ovot B(L^2(\G))\to X\rtimes_{\alpha, \mathcal{F}}\wG \ ,$$ which automatically preserves the coaction $\id \ovot \hat{\Delta}_r$. Since $\G$ is of Kac type, Proposition \ref{conditional expectation} tells us that $E: (X\rtimes_{\alpha, \mathcal{F}}\wG, \id \ovot \hat{\Delta}_r) \to (\alpha(X), \id \ovot \hat{\Delta})$ is $\wG$-equivariant. The composition 
    $$
\begin{tikzcd}
X\ovot \mathscr{L}^\infty(\wG) \arrow[rr, "\subseteq "] &  & X\ovot B(L^2(\G)) \arrow[rr, "P"] &  & {X\rtimes_{\alpha, \mathcal{F}}\wG} \arrow[rr, "E"] &  & \alpha(X)
\end{tikzcd}$$
is therefore $\wG$-equivariant. The result follows.
\end{proof}

We now obtain the following result:
\begin{thm}\label{main result}
    Let $(X, \alpha)$ be a $\wG$-operator system. The following statements are equivalent:
    \begin{enumerate}
        \item $(X, \alpha)$ is $\wG$-injective.
        \item $(X\rtimes_{\alpha,\mathcal{F}} \wG, \id \ovot \Delta)$ is $\G$-$W^*$-injective.
        \item $(X \rtimes_{\alpha,r} \wG, \id \ovot \Delta)$ is $\G$-$C^*$-injective.
            \end{enumerate}
                    Moreover, if $\G$ is of Kac-type, then $(X, \alpha)$ is $\wG$-injective if and only if $X\rtimes_{\alpha,\mathcal{F}} \wG$ is injective as an operator system.

\end{thm}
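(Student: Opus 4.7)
The strategy is to use Theorem~\ref{important} as a bridge. Each of the three forms of injectivity in the statement translates, via the characterisations in Proposition~\ref{hat injectivity} and Proposition~\ref{G-injectivity characterisation}, into one of the four equivalent conditions of Theorem~\ref{important}. The forward directions will then reduce to the fact that $\G$-$W^*$- and $\G$-$C^*$-injectivity are closed under taking equivariant ucp retracts, while the converse directions will use Corollary~\ref{yeeha} to recover injectivity of $X$ and then build the required equivariant conditional expectation directly out of the injectivity of the crossed product.

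For $(1)\Rightarrow(2)$ I will start from Proposition~\ref{hat injectivity} to obtain injectivity of $X$ together with condition~(1) of Theorem~\ref{important}, and apply Theorem~\ref{important} to extract its condition~(3), namely a $\G$-$W^*$-equivariant ucp conditional expectation $(X\ovot B(L^2(\G)),\id\ovot \Delta)\to (X\rtimes_{\alpha,\mathcal{F}}\wG,\id\ovot \Delta)$. Lemma~\ref{G-injective systems} makes the ambient space $\G$-$W^*$-injective, so the crossed product, as a $\G$-$W^*$-retract thereof, is also $\G$-$W^*$-injective. For $(2)\Rightarrow(1)$ I will first invoke Corollary~\ref{yeeha} to obtain injectivity of $X$, then apply $\G$-$W^*$-injectivity of $X\rtimes_{\alpha,\mathcal{F}}\wG$ to the $\G$-$W^*$-equivariant inclusion $X\rtimes_{\alpha,\mathcal{F}}\wG\hookrightarrow X\ovot B(L^2(\G))$ against the identity on the crossed product to realise condition~(3) of Theorem~\ref{important}; running the bridge backwards through Proposition~\ref{hat injectivity} closes the loop. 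The equivalence $(1)\Leftrightarrow(3)$ is identical in shape, with condition~(4) of Theorem~\ref{important} in place of~(3) and the $\G$-$C^*$-injectivity of $\mathcal{R}(X\ovot B(L^2(\G)),\id\ovot \Delta)$, supplied by the corollary immediately following Lemma~\ref{regular injectivity}, playing the role of Lemma~\ref{G-injective systems}.

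For the Kac-type clause, the $\Rightarrow$ direction is essentially free from the work above: the conditional expectation coming from Theorem~\ref{important}(3) exhibits $X\rtimes_{\alpha,\mathcal{F}}\wG$ as a ucp retract of $X\ovot B(L^2(\G))$, and the latter is injective as an operator system (injectivity of $X$ combined with the operator-system Fubini injectivity result recalled in the preliminaries), hence so is $X\rtimes_{\alpha,\mathcal{F}}\wG$. For the $\Leftarrow$ direction I will combine two ingredients. First, Proposition~\ref{conditional expectation} supplies a ucp conditional expectation $X\rtimes_{\alpha,\mathcal{F}}\wG\to \alpha(X)\cong X$, so injectivity of the crossed product forces $X$ to be injective. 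Second, Arveson's extension theorem applied to the inclusion $X\rtimes_{\alpha,\mathcal{F}}\wG\subseteq X\ovot B(L^2(\G))$ produces a bare (non-equivariant) ucp conditional expectation $X\ovot B(L^2(\G))\to X\rtimes_{\alpha,\mathcal{F}}\wG$; this is exactly condition~(5) of Theorem~\ref{important}, whose Kac-type clause then returns condition~(1), and Proposition~\ref{hat injectivity} finishes.

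The genuinely hard part of the argument has already been discharged inside Theorem~\ref{important}, where the multiplicative-domain trick of Lemma~\ref{multiplicative domain argument} upgrades bare conditional expectations to $\wG$-equivariant ones, and the Kac identity \eqref{rem:Kacidentity} is used through Proposition~\ref{conditional expectation}(3) to promote the non-equivariant expectation appearing in condition~(5) to a $\wG$-equivariant one. Given that black box, the present theorem is pure bookkeeping: matching each injectivity hypothesis to the correct ambient $\G$-equivariantly injective operator system, and applying Corollary~\ref{yeeha} to recover the missing injectivity of $X$ in the backward directions.
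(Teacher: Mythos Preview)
Your proposal is correct and follows essentially the same approach as the paper: both route everything through Theorem~\ref{important}, combined with Proposition~\ref{hat injectivity}, Lemma~\ref{G-injective systems} (and its $\G$-$C^*$ corollary), and Corollary~\ref{yeeha}. The only cosmetic differences are that the paper argues in a cycle $(1)\Rightarrow(2)\Rightarrow(3)\Rightarrow(1)$, invoking Lemma~\ref{regular injectivity} for $(2)\Rightarrow(3)$, whereas you prove $(1)\Leftrightarrow(2)$ and $(1)\Leftrightarrow(3)$ separately; also, in the Kac backward direction the conditional expectation $X\ovot B(L^2(\G))\to X\rtimes_{\alpha,\mathcal{F}}\wG$ comes from injectivity of the crossed product itself (extending the identity), not from Arveson's extension theorem per se.
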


\begin{proof}
    $(1)\implies (2)$ Assume that $(X, \alpha)$ is $\wG$-injective. By Proposition \ref{hat injectivity} and Theorem \ref{important}, $X$ is injective and there exists a $\G$-$W^*$-ucp conditional expectation $$(X\ovot B(L^2(\G)), \id \ovot \Delta)\to (X\rtimes_{\alpha, \mathcal{F}}\wG, \id \ovot \Delta).$$
By Lemma \ref{G-injective systems}, it follows that $(X\rtimes_{\alpha, \mathcal{F}}\wG, \id \ovot \Delta)$ is $\G$-$W^*$-injective.

    $(2)\implies (3)$ Immediate by Lemma \ref{regular injectivity}.

    $(3)\implies (1)$ Note that $X\rtimes_{\alpha, r}\wG\subseteq \mathcal{R}(X \ovot B(L^2(\G)), \id_X \ovot \Delta)$, so by $\G$-$C^*$-injectivity, there is a ucp $\G$-$C^*$-equivariant conditional expectation 
    $$(\mathcal{R}(X \ovot B(L^2(\G))), \id \ovot \Delta)\to (X\rtimes_{\alpha,r}\wG, \id \ovot \Delta).$$
    Combining Theorem \ref{important}, Corollary \ref{yeeha} and Proposition \ref{hat injectivity}, we see that $(X, \alpha)$ is $\wG$-injective.

    The assertion in the Kac case is proven similarly, using Theorem \ref{important}.
\end{proof}

We now apply Theorem \ref{main result} to the $\wG$-operator system $\mathbb{C}$ with the trival action. In the Kac case, we then recover a result established in \cite{Ruan}. 
\begin{cor}\label{amenability}
    The following statements are equivalent:
    \begin{enumerate}
        \item $\wG$ is amenable.
        \item There exists a $\wG$-equivariant state $(B(L^2(\G)), \hat{\Delta}_r)\to (\mathbb{C}, \tau).$
        \item $(\mathscr{L}^\infty(\G), \Delta)$ is $\G$-$W^*$-injective.
        \item $(C_r(\G), \Delta)$ is $\G$-$C^*$-injective.
            \end{enumerate}
                    Moreover, if $\G$ is of Kac type, then $\wG$ is amenable if and only if $\mathscr{L}^\infty(\G)$ is injective.
\end{cor}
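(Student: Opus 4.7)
The plan is to deduce this corollary by specializing Theorem \ref{main result} to the $\wG$-operator system $(\mathbb{C}, \tau)$ with trivial action. The first step is to identify the two crossed products in this special case. From the example treated earlier (for the trivial action $\tau(x) = x \otimes 1$), one has $\mathbb{C} \rtimes_{\tau, \mathcal{F}} \wG = \mathbb{C} \ovot \mathscr{L}^\infty(\G) \cong \mathscr{L}^\infty(\G)$ and $\mathbb{C} \rtimes_{\tau, r} \wG = \mathbb{C} \otimes C_r(\G) \cong C_r(\G)$. Under these identifications, the dual $\G$-action $\id_{\mathbb{C}} \ovot \Delta$ is simply the comultiplication $\Delta$ on $\mathscr{L}^\infty(\G)$ (respectively on $C_r(\G)$). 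Applying Theorem \ref{main result} to $(\mathbb{C}, \tau)$ therefore immediately yields the equivalences among $\wG$-injectivity of $(\mathbb{C}, \tau)$, statement $(3)$, and statement $(4)$.

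It remains to identify $\wG$-injectivity of $(\mathbb{C}, \tau)$ with conditions $(1)$ and $(2)$. For this I would invoke Proposition \ref{hat injectivity}. Its item $(2)$ states that $(\mathbb{C}, \tau)$ is $\wG$-injective if and only if there exists a $\wG$-equivariant ucp conditional expectation $(\mathbb{C} \ovot \mathscr{L}^\infty(\wG), \id \ovot \hat{\Delta}) \to (\tau(\mathbb{C}), \id \ovot \hat{\Delta}) = (\mathbb{C}, \tau)$; but such a map is nothing more than a $\wG$-equivariant state on $\mathscr{L}^\infty(\wG)$, which by definition is amenability of $\wG$. This gives $(1) \iff \mathbb{C}$ is $\wG$-injective. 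In the same way, item $(3)$ of Proposition \ref{hat injectivity} characterizes $\wG$-injectivity of $\mathbb{C}$ by the existence of a $\wG$-equivariant ucp conditional expectation $(B(L^2(\G)), \hat{\Delta}_r) \to (\mathbb{C}, \tau)$, which is precisely condition $(2)$.

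For the additional Kac-type statement, I would invoke the moreover clause of Theorem \ref{main result}, which says that $(X, \alpha)$ is $\wG$-injective iff $X \rtimes_{\alpha, \mathcal{F}} \wG$ is injective as an operator system. Specialized to $(\mathbb{C}, \tau)$, this reads: $\mathbb{C}$ is $\wG$-injective iff $\mathscr{L}^\infty(\G)$ is injective as an operator system. Combined with the equivalence $(1) \iff \mathbb{C}$ is $\wG$-injective established above, this yields the desired statement in the Kac-type case.

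I do not anticipate any genuine obstacle here, since this is essentially a straightforward specialization of the main theorem together with a bookkeeping of the identifications. The only minor care needed is to verify that the dual $\G$-action on the crossed products $\mathbb{C} \rtimes_{\tau, r}\wG$ and $\mathbb{C} \rtimes_{\tau, \mathcal{F}}\wG$ indeed coincides, under the natural identifications with $C_r(\G)$ and $\mathscr{L}^\infty(\G)$, with the canonical comultiplication $\Delta$; this is immediate from the definition of the dual action as $\id \ovot \Delta$ restricted to the crossed product.
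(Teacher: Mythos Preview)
Your proposal is correct and follows exactly the approach the paper intends: the corollary is stated immediately after the sentence ``We now apply Theorem \ref{main result} to the $\wG$-operator system $\mathbb{C}$ with the trivial action,'' and your identification of the crossed products via the trivial-action example together with Proposition \ref{hat injectivity} (noting that $\mathbb{C}$ is trivially injective) is precisely the intended specialization.
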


In \cite{Crann}, the notion of $1$-injectivity of $\mathscr{L}^\infty(\G)$ as an operator $L^1(\G)$-module is introduced (in the context of general locally compact quantum groups). The main result of this paper asserts that this notion of $1$-injectivity is equivalent with amenability of $\wG$, which resembles one of the equivalences in Corollary \ref{amenability}.

\begin{rem}\label{bad}\normalfont{Let $\G$ be any coamenable CQG with separable infinite-dimensional function algebra $C(\G)$ (for instance, consider $\G = \widehat{\Gamma}$ where $\Gamma$ is an amenable, countably infinite group). Then $C_r(\G)$ is $\G$-$C^*$-injective but not injective as an operator system.$\Di$}
\end{rem}

\section{Extensions and injective envelopes of crossed products} 

In this section, we use a proposition from \cite{HHN} to show that every $\G\text{-}C^*$-operator system $X$ admits a $\G\text{-}C^*$-injective envelope $I_{\G}^{C^*}(X)$ and that every $\wG$-operator system $X$ admits a $\wG$-injective envelope $I_{\wG}(X)$. See also \cite[Section 3]{HY} where this lemma is applied to show that Yetter-Drinfeld algebras admit equivariant injective envelopes and see also \cite[Section 4]{KKSV} where similar techniques are used to construct the non-commutative Furstenberg boundary associated to a DQG.

We then show that if $(X, \alpha)$ is a $\wG$-operator system, then 
$I_{\wG}(X) \rtimes_{r} \wG$ is the $\G$-$C^*$-injective envelope of the $\G$-$C^*$-operator system $(X\rtimes_{\alpha, r} \wG, \id \otimes \Delta)$.

We start by giving the following straightforward definition:

\begin{defn} Let $X$ be a $\G$-$C^*$-operator system. A pair $(Y, \iota)$ is called $\G$-$C^*$-extension of $X$ if $Y$ is a $\G$-$C^*$-operator system and $\iota: X \to Y$ is a $\G$-$C^*$-equivariant uci map. The $\G$-$C^*$-extension $(Y, \iota)$ is called $\G$-$C^*$-rigid if it has the property that if $\phi: Y \to Y$ is a ucp map with $\phi \iota = \iota$, then $\phi = \id_Y$. It is called $\G$-$C^*$-injective extension of $X$ if $Y$ is $\G$-$C^*$-injective. A $\G$-$C^*$-injective extension of $X$ is called $\G$-$C^*$-injective envelope if the situation $\iota(X)\subseteq \widetilde{X}\subseteq Y$ with $\widetilde{X}$ a $\G$-$C^*$-injective operator subsystem  of $Y$ implies that $\widetilde{X}= Y$.

Similar definitions are made for $\wG$-operator systems.
\end{defn}

\begin{prop}\label{rigid crossed product}
    Let $(X, \alpha),(Y, \beta)$ be $\wG$-operator systems and $\iota: (X, \alpha) \to (Y, \beta)$ be an equivariant uci map. The following statements are equivalent:
    \begin{enumerate}
        \item $(Y, \iota: (X, \alpha) \to (Y, \beta))$ is a $\wG$-rigid extension of $X$.
        \item $(Y \rtimes_{r, \beta} \wG, \iota \rtimes_r \wG: (X \rtimes_{r, \alpha}\wG, \id \ovot \Delta) \to (Y \rtimes_{r, \beta} \wG, \id \ovot\Delta))$ is a $\G$-$C^*$-rigid extension of $X \rtimes_{r,\alpha} \wG$. 
    \end{enumerate}
\end{prop}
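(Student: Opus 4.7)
The plan is to prove both implications via Arveson's extension theorem, the multiplicative-domain trick (Lemma \ref{multiplicative domain argument}), and the canonical faithful ucp conditional expectation $E:Y\rtimes_{\mathcal{F},\beta}\wG\to\beta(Y)$ from Proposition \ref{conditional expectation}, which restricts to $Y\rtimes_{r,\beta}\wG$. Throughout, fix $Y\subseteq B(\mathcal{K})$.

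\textbf{Direction $(1)\Rightarrow(2)$.} Given a ucp $\psi:Y\rtimes_{r,\beta}\wG\to Y\rtimes_{r,\beta}\wG$ with $\psi\circ(\iota\rtimes_r\wG)=\iota\rtimes_r\wG$, first observe that $\psi(1\otimes\lambda(a))=1\otimes\lambda(a)$ for every $a\in\mathcal{O}(\G)$. Arveson-extending $\psi$ to $\widetilde\psi:B(\mathcal{K}\otimes L^2(\G))\to B(\mathcal{K}\otimes L^2(\G))$, the subspace $1\otimes C_r(\G)$ lies in the multiplicative domain, so $\psi$ is a $C_r(\G)$-bimodule map on $Y\rtimes_{r,\beta}\wG$, and by Lemma \ref{multiplicative domain argument} the map $\psi$ is automatically $\wG$-equivariant for $\id\ovot\hat{\Delta}_r$. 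Next, set $\phi:=\beta^{-1}\circ E\circ\psi\circ\beta:Y\to Y$; this is ucp with $\phi\iota=\iota$ (using that $\psi$ fixes $\beta(\iota(X))$ and $E$ restricts to the identity on $\beta(Y)$), whence the $\wG$-rigidity hypothesis yields $\phi=\id_Y$, i.e.\ $E(\psi(\beta(y)))=\beta(y)$ for every $y\in Y$.

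The crucial step is upgrading $E\circ\psi\circ\beta=\beta$ to $\psi\circ\beta=\beta$. Consider $\psi':Y\to Y\rtimes_{r,\beta}\wG$, $\psi'(y):=\psi(\beta(y))$; it is $\wG$-equivariant from $(Y,\beta)$ to $(Y\rtimes_{r,\beta}\wG,\id\ovot\hat\Delta_r)$, as is $\beta$. Viewing both as $\wG$-equivariant ucp maps into the ambient system and invoking the Poisson-transform correspondence of Lemma \ref{lemmaPoissontransform}, the ucp map underlying $\psi'$ is $(\id\ovot\hat\epsilon)\psi'=\beta^{-1}\circ E\circ\psi'=\phi=\id_Y$, which coincides with the one underlying $\beta$; injectivity of the Poisson correspondence gives $\psi'=\beta$. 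Combining with the $C_r(\G)$-bimodule property then yields $\psi(\beta(y)(1\otimes\lambda(a)))=\beta(y)(1\otimes\lambda(a))$ for all $y,a$, and density of $\lspan\{\beta(y)(1\otimes\lambda(a))\}$ in $Y\rtimes_{r,\beta}\wG$ plus continuity close the argument.

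\textbf{Direction $(2)\Rightarrow(1)$.} Given a ucp $\phi:Y\to Y$ with $\phi\iota=\iota$, Arveson-extend to $\widetilde\phi:B(\mathcal{K})\to B(\mathcal{K})$. By the slice characterisation of the Fubini tensor product, $\widetilde\phi\ovot\id_{B(L^2(\G))}$ descends to a canonical ucp endomorphism of $Y\ovot B(L^2(\G))$ independent of the choice of Arveson extension. Using that this induced map fixes $1\otimes C_r(\G)$ and is compatible with $\id\ovot\Delta$ on the spectral level, it restricts to a ucp endomorphism $\psi$ of $Y\rtimes_{r,\beta}\wG$ satisfying $\psi\circ(\iota\rtimes_r\wG)=\iota\rtimes_r\wG$. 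The $\G$-$C^*$-rigidity hypothesis forces $\psi=\id$, from which restriction to $\beta(Y)$ and transfer via $\beta^{-1}$ gives $\phi=\id_Y$.

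\textbf{Main obstacle.} The principal difficulty is the upgrade $E\circ\psi\circ\beta=\beta\Rightarrow\psi\circ\beta=\beta$ in $(1)\Rightarrow(2)$: the naive Schwarz-plus-faithfulness route stumbles on the fact that $\psi(\beta(y))^*\psi(\beta(y))$ lives only in $B(\mathcal{K}\otimes L^2(\G))$, not inside the operator system $Y\rtimes_{r,\beta}\wG$. Routing through the Poisson transform bypasses this, but requires careful bookkeeping of the distinction between $\hat\Delta$ and $\hat\Delta_r$ on the reduced crossed product. A parallel subtlety appears in $(2)\Rightarrow(1)$ when showing that the non-equivariant ucp $\widetilde\phi\ovot\id$ actually restricts to the reduced crossed product, since $(\widetilde\phi\ovot\id)(\beta(y))$ does not in general lie in $\beta(Y)$.
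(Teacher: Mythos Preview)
Your proposal overlooks the central hypothesis that the test map is \emph{equivariant}. Although the paper's written definition of ``$\G$-$C^*$-rigid'' does not make this explicit, the paper's own proof begins with ``Let $\Phi: (Y\rtimes_{r,\beta}\wG,\id\ovot\Delta)\to(Y\rtimes_{r,\beta}\wG,\id\ovot\Delta)$ be a $\G$-$C^*$-equivariant ucp map\ldots'', and this is also how rigidity is used in Proposition \ref{injective envelope}. With that assumption in hand, the argument for $(1)\Rightarrow(2)$ is far simpler than yours: since $\Phi$ preserves the coaction $\id\ovot\Delta$ and $\beta(Y)=\operatorname{Fix}(Y\rtimes_{r,\beta}\wG,\id\ovot\Delta)$ (Theorem \ref{important action}), one has $\Phi(\beta(Y))\subseteq\beta(Y)$ directly, so $\phi:=\beta^{-1}\Phi\beta$ is well defined without any recourse to $E$. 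Lemma \ref{multiplicative domain argument} then gives $\wG$-equivariance of $\phi$, hypothesis $(1)$ gives $\phi=\id_Y$, i.e.\ $\Phi\beta=\beta$, and the multiplicative-domain argument finishes as you wrote. Likewise in $(2)\Rightarrow(1)$ the paper assumes $\phi$ is $\wG$-equivariant, whence $(\phi\ovot\id)\beta=\beta\phi$ and the induced map is automatically a $\G$-$C^*$-equivariant endomorphism of $Y\rtimes_{r,\beta}\wG$; rigidity and injectivity of $\beta$ close the loop.

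Your attempt to dispense with the $\G$-equivariance of $\psi$ via the Poisson transform has a genuine gap. Lemma \ref{lemmaPoissontransform} gives a bijection whose inverse uses the identity $(\hat\epsilon\ovot\id)\hat\Delta=\id$ on $\mathscr{L}^\infty(\wG)$; it therefore applies only when the target carries the coaction $\id\ovot\hat\Delta$ into $Z\ovot\mathscr{L}^\infty(\wG)$. Your target $(Y\rtimes_{r,\beta}\wG,\id\ovot\hat\Delta_r)$ is not of this form, and on $B(L^2(\G))$ the map $(\hat\epsilon\ovot\id)\hat\Delta_r=(\omega_{\xi_\G}\ovot\id)\hat\Delta_r$ is the conditional expectation onto $\mathscr{L}^\infty(\wG)$, not the identity. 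So from $(\id\ovot\hat\epsilon)\psi'=(\id\ovot\hat\epsilon)\beta$ one cannot conclude $\psi'=\beta$; at best one recovers agreement after projecting to $Y\ovot\mathscr{L}^\infty(\wG)$, which is exactly the statement $E\psi\beta=\beta$ you already had. The parallel obstacle you flag in $(2)\Rightarrow(1)$ --- that $(\widetilde\phi\ovot\id)(\beta(y))$ need not lie in the crossed product when $\phi$ is not $\wG$-equivariant --- is equally real and is not resolved by your sketch.
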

\begin{proof} $(1)\implies (2)$ Let $\Phi: (Y\rtimes_{r, \beta} \wG, \id\ovot \Delta) \to (Y\rtimes_{r, \beta}\wG, \id \ovot \Delta)$ be a $\G$-$C^*$-equivariant ucp map such that $\Phi\circ (\iota \rtimes_r \wG) = \iota \rtimes_r \wG$. We must show that $\Phi = \id$. By assumption, $\Phi(1\otimes \lambda(a)) = 1\otimes \lambda(a)$ for all $a\in \mathcal{O}(\G)$. Moreover, by equivariance of $\Phi$, we see that $\Phi(\beta(Y))\subseteq \beta(Y)$, so we can define the map $\phi: Y \to Y$ by the composition
$$
\begin{tikzcd}
Y \arrow[r, "\beta"] & \beta(Y) \arrow[rr, "\Phi"] &  & \beta(Y) \arrow[r, "\beta^{-1}"] & Y.
\end{tikzcd}$$
If $x\in X$, we have
$$\phi\iota(x) = \beta^{-1}\Phi\beta(\iota(x)) = \beta^{-1} \Phi (\iota \ovot \id)\alpha(x)= \beta^{-1} (\iota \ovot \id)\alpha(x)= \beta^{-1}\beta(\iota(x))= \iota(x) \ ,$$
so that $\phi \circ\iota =\iota$. Moreover, if $Y\subseteq B(\H)$, then $\Phi$ extends to a map $B(\H\otimes L^2(\G))\to B(\H\otimes L^2(\G))$ by Arveson's extension theorem, and this map is $\id \ovot \hat{\Delta}_r$-equivariant by Lemma \ref{multiplicative domain argument}. Therefore, also $\phi: (Y, \beta)\to (Y, \beta)$ is $\wG$-equivariant, and by $\wG$-ridigity of the extension, we conclude that $\phi = \id_Y$. In other words, $\Phi \beta = \beta$. If then $y\in Y$ and $a\in \mathcal{O}(\G)$, we see that
$$\Phi(\beta(y)(1\otimes \lambda(a)))= \Phi(\beta(y))(1\otimes \lambda(a)) = \beta(y)(1\otimes \lambda(a)) \ ,$$
so $\Phi = \id$ on $Y \rtimes_{\beta, r} \wG$. 

$(2)\implies (1)$ Assume that $\phi: (Y, \beta)\to (Y, \beta)$ is a $\wG$-equivariant ucp map such that $\phi \iota = \iota$.  Then clearly $(\phi \ovot \id)(\iota \rtimes_r \wG) = \iota \rtimes_r\wG$ so by $\G$-$C^*$-rigidity we see that $\phi \ovot \id = \id_{Y\rtimes_{\beta,r}\wG}$. We thus see that for $y\in Y$, $\beta(y) =(\phi \ovot \id)\beta(y)= \beta(\phi(y))$
so $\phi(y) = y$ by injectivity of $\beta$. Thus, $\phi =\id_Y$.
\end{proof}

Next, we establish existence and uniqueness of $\G$-$C^*$-injective envelopes.

\begin{prop}\label{injective envelope} Let $X$ be a $\G\text{-}C^*$-operator system. There exists a $\G\text{-}C^*$-injective envelope $(S, \iota)$ for $X$. If $(\widetilde{S}, \tilde{\iota})$ is another $\G\text{-}C^*$-injective envelope, there exists a unique $\G\text{-}C^*$-unital order isomorphism $\theta: S \to \widetilde{S}$ such that the diagram
$$
\begin{tikzcd}
S \arrow[rr, "\theta", dashed] &                                                    & \widetilde{S} \\
                               & X \arrow[lu, "\iota"] \arrow[ru, "\tilde{\iota}"'] &              
\end{tikzcd}$$
   commutes.
   Moreover, $(S, \iota: X \to S)$ is a $\G$-$C^*$-injective envelope if and only if $(S, \iota)$ is $\G$-$C^*$-injective and $\G$-$C^*$-rigid.
\end{prop}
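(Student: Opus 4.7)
The strategy is to adapt Hamana's classical injective-envelope construction to the $\G$-$C^*$-equivariant category, which is the content of the categorical framework lemma from \cite{HHN} that the authors advertise at the start of Section 5.

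\textbf{Existence.} Assume $X\subseteq B(\H)$. The coaction $\alpha:X\to X\otimes C_r(\G)$ realises $X$ as a $\G$-$C^*$-operator subsystem of $I:=\mathcal{R}(B(\H)\ovot B(L^2(\G)),\id\ovot\Delta)$, which is $\G$-$C^*$-injective by the corollary following Lemma \ref{regular injectivity}. Consider the collection $\mathcal{F}$ of $\G$-$C^*$-equivariant ucp self-maps $\phi:I\to I$ with $\phi\circ\alpha=\alpha$, ordered by the Hamana relation $\phi\preceq\psi\iff\phi=\phi\circ\psi=\psi\circ\phi$. The set $\mathcal{F}$ is nonempty (it contains $\id_I$), convex, and compact for the point-weak$^*$ topology; moreover equivariance is preserved under such limits because $\id\ovot\Delta$ is point-weak$^*$ continuous. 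A standard Zorn's lemma argument produces a minimal $\phi_0\in\mathcal{F}$, and the usual Choi–Effros-type manipulation shows $\phi_0^2=\phi_0$. Then $S:=\phi_0(I)$ (with the inclusion operator-system structure) is $\G$-$C^*$-injective as a ucp retract of the $\G$-$C^*$-injective system $I$, and the corestriction $\iota:X\to S$ of $\alpha$ is a $\G$-$C^*$-equivariant uci embedding.

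\textbf{Uniqueness and rigidity characterisation.} The minimality of $\phi_0$ translates directly into $\G$-$C^*$-rigidity of $(S,\iota)$: if $\phi:S\to S$ is a $\G$-$C^*$-equivariant ucp map with $\phi\circ\iota=\iota$, then $\phi\circ\phi_0\in\mathcal{F}$ and $\phi\circ\phi_0\preceq\phi_0$, forcing $\phi\circ\phi_0=\phi_0$, i.e.\ $\phi=\id_S$. Similarly, any proper $\G$-$C^*$-injective intermediate $\widetilde{X}$ with $\iota(X)\subseteq\widetilde{X}\subsetneq S$ would, by $\G$-$C^*$-injectivity of $\widetilde{X}$, provide a $\G$-$C^*$-ucp retract $r:S\to\widetilde{X}\subseteq S$ fixing $\iota(X)$, forcing $r=\id_S$ by rigidity, a contradiction. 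For uniqueness: given another envelope $(\widetilde{S},\tilde{\iota})$, $\G$-$C^*$-injectivity of both sides yields $\G$-$C^*$-equivariant ucp maps $\theta:S\to\widetilde{S}$ with $\theta\iota=\tilde{\iota}$ and $\theta':\widetilde{S}\to S$ with $\theta'\tilde{\iota}=\iota$; then $\theta'\theta:S\to S$ fixes $\iota(X)$, so by rigidity $\theta'\theta=\id_S$, and symmetrically $\theta\theta'=\id_{\widetilde{S}}$. Uniqueness of $\theta$ follows by the same rigidity argument applied to any other candidate. The characterisation \emph{injective envelope $\iff$ injective $+$ rigid} is then immediate: the forward direction was established during existence, and the reverse direction is the intermediate-subsystem argument above.

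\textbf{Main obstacle.} The delicate point is verifying that the Hamana/Choi–Effros machinery transports faithfully into the equivariant setting. Concretely, one must check that $\mathcal{F}$ is stable under composition and under the operations needed to produce minimal idempotents (point-weak$^*$ limits, convex combinations), and that equivariance is preserved throughout. This is precisely what the \cite{HHN} framework proposition delivers, and the verification hinges on the point-weak$^*$ continuity of the coaction $\id\ovot\Delta$ on $I$ together with the $\G$-$C^*$-rigidity inherited from minimality of $\phi_0$.
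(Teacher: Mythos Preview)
Your approach is essentially the same as the paper's: embed $X$ via $\alpha$ into the $\G$-$C^*$-injective object $I=\mathcal{R}(B(\H)\ovot B(L^2(\G)),\id\ovot\Delta)$, produce a minimal idempotent in the set of equivariant ucp self-maps of $I$ fixing $\alpha(X)$ (the paper invokes \cite[Proposition 2.1]{HHN} rather than Zorn directly, but this is cosmetic), and read off injectivity, rigidity, and uniqueness. Two small points you gloss over that the paper makes explicit: (i) closedness of $\mathcal{F}$ under point-weak$^*$ limits is argued not via continuity of $\id\ovot\Delta$ but by restricting to the algebraic core, where $(\id\ovot\Delta)(y)$ is a finite sum in $Y\odot\mathcal{O}(\G_r)$ so that $(\varphi_i\odot\id)\to(\varphi\odot\id)$ termwise; and (ii) one must verify the Podle\'s density condition for $S=\phi_0(I)$ to know it is a $\G$-$C^*$-operator system at all, which follows from $(\phi_0\otimes\id)(\beta(y)(1\otimes\lambda(g)))=\widetilde\beta(\phi_0(y))(1\otimes\lambda(g))$ and the Podle\'s condition for $I$.
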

\begin{proof} If $(S, \iota)$ is a $\G$-$C^*$-injective and $\G$-$C^*$-rigid extension of the $\G$-$C^*$-operator system $X$, then $(S, \iota)$ is a $\G$-$C^*$-injective envelope of $X$. Indeed, if $\iota(X)\subseteq T \subseteq S$ where $T$ is a $\G$-$C^*$-injective operator subsystem of $S$, there is a $\G$-$C^*$ ucp map $\theta: S \to T$ such that $\theta\vert_T = \id_T$. Viewing $\theta$ as a map $S \to S$ and noting that $\theta \iota = \iota$, the $\G$-$C^*$-rigidity shows that $\theta = \id_S$. Thus, $S = \theta(S)\subseteq T\subseteq S$ and $S=T$. 

We now use this fact to show that $\G$-$C^*$-injective envelopes exist.
We may assume that $X\subseteq B(\H)$. Let $Y:= \mathcal{R}(B(\H) \ovot B(L^2(\G)), \id \ovot \Delta)$ which is a $\G$-$C^*$-injective operator system and consider the set $\mathcal{G}$ of all $\G$-$C^*$-equivariant ucp maps $\varphi: Y \to Y$ such that $\varphi\circ \alpha = \alpha$. Viewing $\mathcal{G}$ as a subset of the ucp maps $Y \to B(\H \otimes L^2(\G))$, we observe that $\mathcal{G}$ is closed in the topology of pointwise $\sigma$-weak convergence. Indeed, assume that $\{\varphi_i\}_{i\in I}$ is a net in $\mathcal{G}$ that converges to the ucp map $\varphi: Y \to B(\H \otimes L^2(\G))$ in this topology. If $y$ is in the algebraic core of the $\G$-$C^*$-action $\id \ovot \Delta: Y \curvearrowleft \G$, then 
\begin{align*}
    (\id \ovot \Delta)\varphi(y)= \lim_{i\in I} (\id \ovot \Delta)\varphi_i(y) = \lim_{i\in I} (\varphi_i\odot \id)(\id \ovot \Delta)(y) = (\varphi \odot \id)(\id \ovot \Delta)(y) \ ,
\end{align*}
and we conclude that $\varphi(Y)\subseteq Y$ and $(\id \ovot \Delta)\varphi = (\varphi \otimes \id)(\id \ovot \Delta)$. Moreover, if $x\in X$, then
$$\varphi(\alpha(x)) = \lim_{i\in I} \varphi_i(\alpha(x))= \alpha(x) \ ,$$
so $\varphi\in \mathcal{G}$. By \cite[Proposition 2.1]{HHN}, it follows that there exists an idempotent $\varphi\in \mathcal{G}$ 
such that $\varphi = \varphi\phi \varphi$ for all $\phi\in \mathcal{G}$. Since $\varphi \alpha = \alpha$, we see that $\alpha(X)\subseteq \varphi(Y)$. We claim that $(\varphi(Y), \alpha: X \to \varphi(Y))$ is a $\G$-$C^*$-injective envelope of $X$. Consider the action $\beta:= \id \ovot \Delta: Y \to Y \otimes C_r(\G)$, which by $\G$-$C^*$-equivariance of $\varphi$ restricts to a map $\widetilde{\beta}:\varphi(Y)\to \varphi(Y)\otimes C_r(\G)$. In this way $(\varphi(Y), \widetilde{\beta})$ becomes a $\G$-$C^*$-operator system. Indeed, note that $$\widetilde{\beta}(\varphi(y))(1\otimes \lambda(g)) = (\varphi \otimes \id)(\beta(y))(1\otimes \lambda(g)) = (\varphi\otimes \id)(\beta(y)(1\otimes \lambda(g))$$
so the Podleś condition for $\varphi(Y)$ follows from the Podleś condition for $Y$.

Since $\varphi\circ \varphi = \varphi$, we see that $\varphi$ is the identity on $\varphi(Y)$. Hence, $\varphi(Y)$ is $\G$-$C^*$-injective.

Let $\psi: \varphi(Y)\to \varphi(Y)$ be a ucp $\G$-$C^*$-equivariant map with $\psi\circ \alpha = \alpha$. Then $\psi \varphi\in \mathcal{G}$ and $\varphi \psi = \psi$, so that
$$\varphi = \varphi(\psi\varphi)\varphi= \psi\varphi \ ,$$
which means that $\psi= \id.$ This establishes the $\G$-$C^*$-rigidity of the extension. The statement about uniqueness of the $\G$-$C^*$-injective envelope follows immediately from the definitions of $\G$-$C^*$-injectivity and $\G$-$C^*$-rigidity. 

Finally, if $(S, \iota)$ is a $\G$-$C^*$-injective envelope, then by the uniqueness it is canonically isomorphic to $(\varphi(Y), \alpha)$, which has the required equivariant rigidity and injectivity properties. \end{proof}

In view of the preceding proposition, we can speak about \emph{the} injective envelope of  a $\G$-$C^*$-operator system $X$, which we will denote by $I_{\G}^{C^*}(X)$. The $\G$-$C^*$ action on this space is implicitly understood.

\begin{rem}\normalfont{A similar result is of course true for $\wG$-operator systems. In this case, starting from a $\wG$-operator system $(X, \alpha)$ with $X\subseteq B(\H)$, one applies \cite[Proposition 2.1]{HHN} to the set $\mathcal{G}$ of all $\wG$-equivariant ucp maps $\varphi: B(\H) \ovot \mathscr{L}^\infty(\wG)\to B(\H) \ovot \mathscr{L}^\infty(\wG)$ satisfying $\varphi \circ \alpha = \alpha$ to find a minimal idempotent $\varphi\in \mathcal{G}$. One then checks that $$(\varphi(B(\H)\ovot \mathscr{L}^\infty(\wG)), \alpha: X \to \varphi(B(\H)\ovot \mathscr{L}^\infty(\wG)))$$ is a $\wG$-injective envelope for $X$. 

The only thing that may not be entirely obvious is that $\mathcal{G}$ is closed in the topology of pointwise $\sigma$-weak convergence. But suppose that $\varphi: B(\H)\ovot \mathscr{L}^\infty(\wG) \to B(\H)\ovot \mathscr{L}^\infty(\wG)$ is a ucp map and that $\{\varphi_i\}_{i\in I}$ is a net in $\mathcal{G}$ that converges to $\varphi$
in this topology. If $\omega \in \mathscr{L}^\infty(\wG)_*$ and $y\in B(\H)\ovot \mathscr{L}^\infty(\wG)$, we see that
\begin{align*}
    (\id \ovot \id \ovot \omega)((\id \ovot \hat{\Delta})\varphi(y))&= \lim_{i\in I} (\id \ovot \id \ovot \omega) ((\id \ovot \hat{\Delta})(\varphi_i(y)))\\
    &= \lim_{i\in I} (\id \ovot \id \ovot \omega)((\varphi_i \ovot \id)(\id \ovot \hat{\Delta})(y))\\
    &= \lim_{i\in I} \varphi_i((\id \ovot \id \ovot \omega)(\id \ovot \hat{\Delta})(y))\\
    &= \varphi((\id \ovot \id \ovot \omega)(\id \ovot \hat{\Delta})(y))\\
    &= (\id \ovot \id \ovot \omega)((\varphi \ovot \id)(\id \ovot \hat{\Delta})(y)) \ ,
\end{align*}
so we conclude that $(\id \ovot \hat{\Delta})\varphi = (\varphi \ovot \id)(\id \ovot \hat{\Delta})$, i.e. $\varphi$ is $\wG$-equivariant and thus $\varphi\in \mathcal{G}$.$\Di$}
\end{rem}

The $\wG$-injective envelope of a $\wG$-operator system $X$ will be denoted by $I_{\wG}(X)$ and the $\wG$-action on $I_{\wG}(X)$ is implicitly understood.

After our work, the following result comes for free:

\begin{thm}\label{description} Let $X$ be a $\wG$-operator system. Let $(Y, \iota)$ be a $\wG$-extension of $X$. Then $(Y, \iota)$ is the $\wG$-injective envelope of $X$ if and only if $(Y \rtimes_{r} \wG, \iota \rtimes_r \wG)$ is the $\G$-$C^*$-injective envelope of $X\rtimes_r \wG$. In particular,
$$I_{\wG}(X)\rtimes_r \wG = I_{\G}^{C^*}(X\rtimes_r \wG).$$
\end{thm}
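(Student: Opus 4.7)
The plan is to deduce this cleanly from two biconditionals already in hand, each corresponding to one of the two defining features of an injective envelope. By Proposition \ref{injective envelope}, a $\G$-$C^*$-extension realizes the $\G$-$C^*$-injective envelope of its source precisely when it is both $\G$-$C^*$-injective and $\G$-$C^*$-rigid, and the analogous characterisation for $\wG$-operator systems is recorded in the remark immediately following that proposition. Hence it suffices to transfer each of these two properties across the reduced crossed product construction and verify that they match up.

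First, for injectivity, Theorem \ref{main result} supplies exactly what we need: the $\wG$-operator system $(Y, \beta)$ is $\wG$-injective if and only if $(Y\rtimes_{r, \beta}\wG, \id \ovot \Delta)$ is $\G$-$C^*$-injective. Second, for rigidity, Proposition \ref{rigid crossed product} provides the corresponding equivalence: $\iota: (X, \alpha)\to (Y, \beta)$ is a $\wG$-rigid extension if and only if $\iota \rtimes_r \wG: (X\rtimes_{r, \alpha}\wG, \id \ovot \Delta)\to (Y\rtimes_{r, \beta}\wG, \id \ovot \Delta)$ is a $\G$-$C^*$-rigid extension.

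Concatenating the two biconditionals, $(Y, \iota)$ is a $\wG$-injective envelope of $X$ if and only if $(Y \rtimes_r \wG, \iota \rtimes_r \wG)$ is simultaneously $\G$-$C^*$-injective and $\G$-$C^*$-rigid, which by Proposition \ref{injective envelope} is equivalent to its being a $\G$-$C^*$-injective envelope of $X \rtimes_r \wG$. The displayed identity then follows by specialising to $Y = I_{\wG}(X)$ and invoking uniqueness-up-to-canonical-equivariant-isomorphism of $\G$-$C^*$-injective envelopes.

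At this stage the main obstacle is essentially non-existent: the real work lives in Theorem \ref{important}, Theorem \ref{main result}, and Proposition \ref{rigid crossed product}, whose proofs rest on the automatic $\wG$-equivariance furnished by Lemma \ref{multiplicative domain argument} together with the Poisson transform and the spectral-decomposition description of $\mathcal{R}_{\operatorname{alg}}$ for the Fubini crossed product. Once one isolates the injective envelope as the conjunction of the two properties that both translations preserve simultaneously, the final theorem drops out as a short corollary.
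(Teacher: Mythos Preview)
Your argument is correct and is precisely the paper's own proof, which simply reads ``Immediate from Theorem \ref{main result}, Proposition \ref{rigid crossed product} and Proposition \ref{injective envelope}.'' You have merely unpacked these three references into the two biconditionals (injectivity via Theorem \ref{main result}, rigidity via Proposition \ref{rigid crossed product}) and combined them through the injective-plus-rigid characterisation of the envelope from Proposition \ref{injective envelope}.
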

\begin{proof}
    Immediate from Theorem \ref{main result}, Proposition \ref{rigid crossed product} and Proposition \ref{injective envelope}.
\end{proof}

Recall that the $C^*$-algebra $C(\partial_F \wG)$ of the Furstenberg boundary $\partial_F \wG$ can be realised as the operator system $I_{\wG}(\mathbb{C})$ \cite{KKSV}. In view of this, the following is obvious:
\begin{cor}  
    $I_\G^{C^*}(C_r(\G)) = C(\partial_F \wG)\rtimes_r \wG$ as $\G$-$C^*$-operator systems.
\end{cor}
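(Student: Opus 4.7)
The plan is to apply Theorem \ref{description} to the particular $\wG$-operator system $X = \mathbb{C}$ equipped with the trivial action $\tau(1) = 1\otimes 1$. The theorem then gives
$$I_{\wG}(\mathbb{C})\rtimes_r \wG = I_{\G}^{C^*}(\mathbb{C}\rtimes_{\tau, r}\wG)$$
as $\G$-$C^*$-operator systems, so the only thing I really need to do is identify each side with the claimed objects.

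For the left-hand side, I invoke the identification $I_{\wG}(\mathbb{C}) = C(\partial_F \wG)$ recalled just above the corollary statement (from \cite{KKSV}). For the right-hand side, I use the explicit description of the reduced crossed product with respect to the trivial action: by Definition \ref{reduced},
$$\mathbb{C}\rtimes_{\tau, r}\wG = \clspan\{\tau(1)(1\otimes \lambda(a)) : a\in \mathcal{O}(\G)\} = \clspan\{1\otimes \lambda(a): a\in \mathcal{O}(\G)\},$$
which under the canonical identification $\mathbb{C}\ovot B(L^2(\G)) = B(L^2(\G))$ is simply $C_r(\G)$. Moreover, the $\G$-$C^*$-action from Theorem \ref{important action} is $\id_\mathbb{C}\ovot \Delta$, which under the identification is exactly the comultiplication $\Delta: C_r(\G)\to C_r(\G)\otimes C_r(\G)$. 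Hence $(\mathbb{C}\rtimes_{\tau, r}\wG, \id\ovot \Delta)$ and $(C_r(\G), \Delta)$ are the same $\G$-$C^*$-operator system.

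Putting the two identifications into the instance of Theorem \ref{description} above yields
$$C(\partial_F \wG)\rtimes_r \wG = I_\G^{C^*}(C_r(\G))$$
as $\G$-$C^*$-operator systems, which is exactly the assertion. There is no genuine obstacle here: the whole point of the corollary is to highlight the very special case $X = \mathbb{C}$ of the duality result, and the only verification needed is the routine matching of $\mathbb{C}\rtimes_{\tau, r}\wG$ (with its dual $\G$-$C^*$-structure) with $(C_r(\G), \Delta)$, which was already recorded in the earlier example on trivial actions.
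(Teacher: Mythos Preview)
Your proof is correct and is exactly the argument the paper intends: the corollary is stated as ``obvious'' from Theorem \ref{description} together with the identification $I_{\wG}(\mathbb{C}) = C(\partial_F \wG)$, and your identification of $(\mathbb{C}\rtimes_{\tau,r}\wG, \id\ovot\Delta)$ with $(C_r(\G),\Delta)$ is precisely the computation already recorded in the example on trivial actions.
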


\textbf{Acknowledgments:} Both authors would like to thank Kenny De Commer for valuable input throughout the entire project. The authors also want to thank Sergey Neshveyev for a useful discussion and Stefaan Vaes for sharing the proof of Proposition \ref{multiplicative} and allowing us to write it down in this paper.

The first named author would like to thank Adam Skalski for hospitality and a useful discussion
related to this project during a research visit to IMPAN in the framework of the FWO–PAS project VS02619N “von Neumann algebras arising from quantum symmetries”. This author is supported by Fonds voor Wetenschappelijk Onderzoek (Flanders), via an FWO
Aspirant-fellowship, grant 1162522N. The second author is supported by the NFR project 300837 “Quantum Symmetry”.
 \nocite{*}
\printbibliography[heading=bibintoc, title={References}] 
\end{document}